\definecolor{paleblue}{rgb}{0.7, 0.7, 1.0}
\definecolor{palegreen}{rgb}{0.7, 1,0.7}
  \theoremstyle{plain}
\newtheorem{conj}{Conjecture}
\newtheorem{theorem}{Theorem}
\newtheorem{proposition}{Proposition}[section]
\newtheorem{corollary}[proposition]{Corollary}
  \theoremstyle{remark}
\newtheorem{remark}[proposition]{Remark}
  \theoremstyle{definition}
\newtheorem{definition}[proposition]{Definition}
\newtheorem{lemma}[proposition]{Lemma}
\newtheorem{example}[proposition]{Example}
\newcommand{\wlhd}{\widetilde{\lhd}}
\newcommand{\II}{\mathbf{I}}
\newcommand{\NN}{\mathbb{N}}
\newcommand{\ZZ}{\mathbb{Z}}
\newcommand{\RR}{\mathbb{R}}
\newcommand{\CC}{\mathbb{C}}
\newcommand{\C}{\mathcal C}
\newcommand{\Cbr}{\mathcal{C}_{br}}
\newcommand{\CCbr}{\mathcal{C}_{2br}}
\newcommand{\B}{\mathcal B}
\newcommand{\D}{\mathcal D}
\newcommand{\f}{\mathcal F}
\newcommand{\of}{\overline{\f}}
\renewcommand{\d}{\mathcal D}
\newcommand{\Hom}{\operatorname{Hom}}
\newcommand{\End}{\operatorname{End}}
\newcommand{\Aut}{\operatorname{Aut}}
\newcommand{\Ob}{\operatorname{Ob}}
\newcommand{\Set}{\mathbf{Set}}
\renewcommand{\k}{\Bbbk}
\newcommand{\kk}{R}
\newcommand{\kVect}{\mathbf{Mod_\kk}}
\newcommand{\kVectSum}{\mathbf{Mod_\kk^{\oplus}}}
\newcommand{\kVectGrad}{\mathbf{ModGrad_\kk}}
\newcommand{\Id}{\operatorname{Id}}
\newcommand{\Conj}{\operatorname{Conj}}
\newcommand{\for}{\operatorname{For}}
\newcommand{\one}{\mathbf{1}}
\newcommand{\oV}{\overline{V}}
\renewcommand{\le}{\leqslant}
\renewcommand{\ge}{\geqslant}
\title{Symmetric Categories as a Means of Virtualization for Braid Groups and Racks} 
\author{Victoria LEBED \\ \itshape lebed@math.jussieu.fr}
\begin{document}

\maketitle

\begin{abstract}
\footnotesize
In this paper we propose, firstly, a categorification of virtual braid groups and groupoids in terms of ``locally" braided objects in a symmetric category (SC), and, secondly, a definition of self-distributive structures (SDS) in an arbitrary SC. SDS are shown to produce braided objects in a SC. As for examples, we interpret associativity and the Jacobi identity in a SC as generalized self-distributivity, thus endowing associative and Leibniz algebras with a ``local" braiding. Our ``double braiding" approach provides a natural interpretation for virtual racks and twisted Burau representation. Homology of categorical SDS is defined via weakly simplicial objects, generalizing rack, bar and Leibniz differentials. Free virtual SDS and the faithfulness of corresponding representations of virtual braid groups/groupoids are also discussed.
\end{abstract}

{\bf Keywords:} virtual braid groups, (free) virtual shelves and racks, spindles, braided categories, categorical self-distributivity, associativity, Leibniz algebras, rack homology, Chevalley-Eilenberg homology, weakly simplicial structures, braided coalgebras, (twisted) Burau representation, Dehornoy order

{\bf Mathematics Subject Classification 2010:} 18D10; 18D35; 20F36; 20F29; 20F60; 17A32.

\tableofcontents

\section{Introduction}

The almost century-old \textit{braid theory} is nowadays quite vast and entangled, with unexpected connections with different areas of mathematics still emerging. Dehornoy's spectacular results intertwining self-distributivity, set theory and braid group ordering (\cite{Dehornoy}) provide a good example. \textit{Virtual braid theory} (cf. section \ref{sec:patchwork} for definition and illustrations) dates from the pioneer work of Kauffman (\cite{KauffmanVirtual}) and Vershinin (\cite{Ver}) in the late 1990s, and it still reserves a lot of unexplored questions, in spite of numerous results already obtained. This shows that this theory is far from being an elementary variation of that of usual braids.

Virtual braids are most often considered in the context of virtual knots and links. The topological aspects of these objects are thus naturally in the spotlight. The aim of this paper is on the contrary to clarify some \emph{categorical and representational aspects}.

The flow of ideas related to the objects and concepts studied in this paper can be represented -- very schematically -- by the following chart:

\begin{center}
\fcolorbox{brown}{palegreen}{\begin{minipage}{0.9\textwidth}
\begin{center}
Categories\\
\fcolorbox{brown}{paleblue}{Topology $\underset{\text{intuition}}{\overset{\text{tools}}{\leftrightarrows}}$ Algebra
$\underset{\text{intuition}}{\overset{\text{tools}}{\leftrightarrows}}$ Representation theory}
\end{center}
\end{minipage}}
\end{center}

One has thus a sort of triptych of picturesque mathematical areas, with the category theory as a unifying background. Such unifications are precisely the {raison d'etre} of categories. This chart, certainly subjective and simplified, illustrates quite adequately the content of this paper.

\medskip

We start with an extensive reminder on braid groups $B_n$ in section \ref{sec:braids}, where we extract from the large scope of existing results the ones to be extended to virtual braid groups $VB_n$ in the rest of the paper. Section \ref{sec:patchwork} is a survey of the steps of this vast \textit{virtualization} program which have already been effectuated. Particular attention is given to usual and virtual \textit{SD (= self-distributive) structures} (i.e. shelves, racks and quandles). Virtuality means here the additional datum of a shelf automorphism $f,$ following Manturov (\cite{Manturov}). Subsequent sections contain original patches to the still very fragmentary virtual braid theory.

Section \ref{sec:free} is devoted to \textit{free virtual SD structures}. The faithfulness of the $VB_n^+$ (or $VB_n$) action on these (defined in proposition \ref{thm:VBActsOnVRacks}) is discussed, including a (reformulation of a) conjecture of Manturov (\cite{ManRecognitionEn}). Some arguments in favor of the faithfulness in the free monogenerated virtual shelf case are presented, however without a definite answer.
 
Theorem \ref{thm:AlgCatVirtual} is the heart of the paper. It suggests categorifying virtual braid groups by \textit{``locally" braided objects} (i.e. objects $V$ endowed with an endomorphism $\sigma$ of $V\otimes V$ satisfying the Yang-Baxter equation) in a \textit{``globally" braided symmetric category} $\C.$ Here is the correspondence between the algebraic notion and its categorification:

\begin{center}
\begin{tabular}{| >{\columncolor{paleblue}}c | >{\columncolor{palegreen}}c | >{\columncolor{palegreen}}c |}
\hline
\textbf{category level}     & ``local" braiding   & ``global" symmetric braiding  \\
& for $V$  & on $\C$ \\
\hline
\textbf{$VB_n$ level }  & $B_n$ part  & $S_n$ part\\
\hline
\end{tabular}
\end{center}

Braided objects are a rich source of representations of $VB_n.$
Section \ref{sec:examples} describes examples constructed from shelves and racks, as well as from associative and Leibniz algebras in a symmetric category $(\C, \otimes, \II,c).$ Concretely, ``local" braidings are written in these three cases as follows:
$$\sigma_{\lhd}(a,b)=(b, a\lhd b),$$
$$\sigma_{Ass} = \nu \otimes \mu,$$
$$\sigma_{Lei} = c + \nu \otimes [,],$$
where $\mu$ (resp. $[,]$) is the (Leibniz) multiplication on $V,$ and $\nu: \II \rightarrow V$ is a (Lie) unit. Note that the braiding $\sigma_{Ass}$ is \textit{non-invertible}, providing thus representations of positive virtual braid monoids only (section \ref{sec:positive}). Remark also that our categorification of $VB_n$ is quite different from that of Kauffman and Lambropoulou (\cite{KauffLambr}).

Among the advantages of our ``double braiding approach" is its high flexibility, allowing, for example, to see Manturov's virtual racks via a deformation of the underlying symmetric structure (section \ref{sec:CatForVB}), or to recover the twisted Burau representation (\cite{SW}) by \textit{twisting} the braidings (section \ref{sec:twisted}). 

The second central result of this paper is a definition of \textit{GSD (= generalized self-distributive) structures} in an arbitrary symmetric category (definition \ref{def:shelf_gen}). Our approach is distinct from that of Carter, Crans, Elhamdadi and Saito (\cite{Crans}, \cite{CatSelfDistr}); the difference is discussed in detail in section  \ref{sec:CatSD}. Briefly, we make the comultiplication a part of the GSD structure instead of imposing it on the level of the underlying category: 
\begin{center}
\begin{tabular}{cccccc}
\textbf{GSD} = & comultiplication $\Delta$ &+  & binary operation $\lhd$ &+&compatibility  \\
\hline
&coassociative,&&self-distributive,&&in the braided\\
\multicolumn{2}{r}{weakly cocommutative}&&with $\Delta$ as diagonal&&bialgebra sense\\
\end{tabular}
\end{center}

Shelves and racks in a symmetric category $\C$ are shown to be braided objects in $\C,$ thus giving $VB_n^+$ or $VB_n$ actions. This generalizes the familiar braiding for usual racks. The most naive comultiplications are analyzed, leading to an interpretation of associativity and of the Leibniz condition (= a form of Jacobi identity) as particular cases of generalized self-distributivity. Hopf algebras are also considered in this framework, recovering the braiding known via the Yetter-Drinfeld approach.

In section \ref{sec:hom}, \textit{homologies of GSD structures} are defined in terms of weakly (bi)simplicial structures (cf. theorem \ref{thm:GSDhom}), recovering the familiar rack, bar and Leibniz differentials in suitable settings.

\medskip 
\underline{\textbf{Notations and conventions.}} 

For an object $V$ in a strict monoidal category and a morphism $\varphi:V^{\otimes l}\rightarrow V^{\otimes r},$ the following notations will be repeatedly used:
\begin{equation}\label{eqn:phi_i}
\varphi_i := \Id_V^{\otimes (i-1)}\otimes\varphi \otimes \Id_V^{\otimes (k-i+1)}:V^{\otimes (k+l)}\rightarrow V^{\otimes (k+r)};
\end{equation}
\begin{equation}\label{eqn:phi^i}
\varphi^n := (\varphi_1)^{\circ n}=\varphi_1 \circ \cdots \circ\varphi_1 :V^{\otimes k}\rightarrow V^{\otimes (k+n(r-l))},
\end{equation}
where $\varphi_1$ is composed with itself $n$ times. Further, the abbreviated notation $V^n:= V^{\otimes n}$ will sometimes be used.

The term ``knot" will often be used instead of ``knots and links" for brevity.

The unit interval $[0,1]$ will be denoted by $I.$

Notations $\k$ and $\kk$ will be used for a field and a commutative ring respectively.

\section{Braids: crossings at the intersection of algebra, topology, representation theory and category theory}\label{sec:braids}

The notion of braids, completely intuitive from the topological viewpoint, was first introduced by Emil Artin in 1925, although it was implicitly used by many XIXth century mathematicians. Its algebraic counterpart, the notion of braid groups, accompanies its ``twin brother" from the birth. Representation theory methods have been extensively applied to braids since then, with, as two major examples, Burau representation (and, later, quantum invariants) and Artin action on free groups, both recalled in this section (see for instance  \cite{Birman} for more details). Braided categories, a natural categorification of the braid group, appeared in 1986 (Joyal and Street, \cite{BraidedCat}), long after symmetric categories, corresponding to symmetric groups (Eilenberg and Kelly, \cite{SymmCat}). The aim of this section is to recall all those different viewpoints on braids, before proceeding to their virtualization in the rest of the paper. 

Almost no proofs are given here. For a more detailed exposition, the reader is sent to the wonderfully written books \cite{Birman} and \cite{Braids} for the general aspects of the braid theory, and \cite{Invariants} for the categoric aspects.

 Braid groups may also be interpreted in terms of configuration spaces of $n$ points on the plane, or in terms of mapping class groups of an $n$-punctured disc, but this will not be discussed here.

\subsection{Topology: the birth}  

{Topologically}, a braid can be thought of as a $C^1$ embedding of $n$ copies $I_1, \ldots, I_n$ of $I=[0,1]$ into $\RR^2 \times I,$ with the left ends of the $I_j$'s being sent bijectively to points $(l,0,0), \: 1 \le l \le n$; the right ends being sent bijectively to $(r,0,1), \: 1 \le r \le n$; the tangents being vertical at the endpoints of the $I_j$'s; and the images of the $I_j$'s always looking ``up" (i.e. the embedding $I_j \hookrightarrow \RR^2 \times I$ composed with the projection $\RR^2 \times I\twoheadrightarrow I$ is a homeomorphism). One could also consider smooth or piecewise linear embeddings, with the same resulting theory. Such embeddings, considered up to isotopy, are called \emph{braids on $n$ strands}. They are represented by \emph{diagrams} corresponding to the projection ``forgetting" the second coordinate of $\RR^2 \times I,$ with the under/over information for each crossing:

\begin{center}
\begin{tikzpicture}[scale=0.5]
\draw [rounded corners] (1,0) -- (1,4);
\draw [rounded corners] (5,0) -- (5,0.5) -- (4,1.5) --(4,2);
\draw [rounded corners] (4,0) -- (4,0.5) -- (4.4,0.9);
\draw [rounded corners] (5,4) -- (5,1.5) -- (4.6,1.1);
\draw [rounded corners] (3,0) -- (3,0.5) -- (2,1.5) --(2,4);
\draw [rounded corners] (2,0) -- (2,0.5) -- (2.4,0.9);
\draw [rounded corners] (3,2) -- (3,1.5) -- (2.6,1.1);
\draw [rounded corners] (3,2) -- (3,2.5) -- (4,3.5) --(4,4);
\draw [rounded corners] (4,2) -- (4,2.5) -- (3.6,2.9);
\draw [rounded corners] (3,4) -- (3,3.5) -- (3.4,3.1);
\node at (6,0) {.};
\end{tikzpicture}
\end{center}

The vertical ``stacking" of braids on $n$ strands, followed by an obvious contraction, defines a group structure on them. We denote this group by $\B_n.$ 
\begin{center}
\begin{tikzpicture}[scale=0.7]
\draw (0,0) rectangle (1,1);
\node at (0.5,0.5) {$\xi_2$};
\node at (1.5,0.5) {$\cdot$};
\draw (2,0) rectangle (3,1);
\node at (2.5,0.5) {$\xi_1$};
\node at (4,0.5) {$=$};
\draw (5,0) rectangle (6,0.5);
\node at (5.5,0.25) {$\xi_1$};
\draw (5,0.5) rectangle (6,1);
\node at (5.5,0.75) {$\xi_2$};
\end{tikzpicture}
\end{center}

This topological interpretation of braids is particularly useful in knot theory due to the closure operation, effectuated by passing to $\RR^3 \supset \RR^2 \times I$ and connecting all pairs of points $((j,0,0),(j,0,1)),$ where $1 \le j \le n,$ by untangled arcs living ``outside" the braid. 

\begin{center}
\begin{tikzpicture}[scale=0.5]
\draw [rounded corners] (1,2) -- (1,3.8) -- (0,4)-- (-1,3.8)  --(-1,0.2)  -- (0,0) --(1,0.2) --(1,2) ;
\draw [rounded corners] (-5,2) -- (-5,0) -- (0,-0.8) -- (5,0) -- (5,0.5) -- (4,1.5) --(4,2);
\draw [rounded corners] (-4,2) -- (-4,0) -- (0,-0.6)-- (4,0) -- (4,0.5) -- (4.4,0.9);
\draw [rounded corners] (-5,2) -- (-5,4) --(0,4.8) -- (5,4) -- (5,1.5) -- (4.6,1.1);
\draw [rounded corners] (-3,2) -- (-3,0) -- (0,-0.4)-- (3,0) -- (3,0.5) -- (2,1.5) --(2,4)--(0,4.2)--(-2,4)--(-2,2);
\draw [rounded corners] (-2,2) -- (-2,0) -- (0,-0.2)-- (2,0) -- (2,0.5) -- (2.4,0.9);
\draw [rounded corners] (3,2) -- (3,1.5) -- (2.6,1.1);
\draw [rounded corners] (3,2) -- (3,2.5) -- (4,3.5) --(4,4)--(0,4.6)--(-4,4)--(-4,2);;
\draw [rounded corners] (4,2) -- (4,2.5) -- (3.6,2.9);
\draw [rounded corners] (-3,2) -- (-3,4) --(0,4.4) -- (3,4) -- (3,3.5) -- (3.4,3.1);
\end{tikzpicture}
\end{center}

Alexander's theorem (1923) assures that all links and knots are obtained this way, and Markov's theorem (1935) explains which braids give the same link.

\subsection{Algebra: a twin brother}  

{Algebraically}, \emph{the (Artin) braid group} $B_n$ is a kind of generalization of the symmetric group $S_n.$ It is defined by generators $\sigma_1, \sigma_2, \ldots,\sigma_{n-1},$ subject to relations
\begin{align}
\sigma_i \sigma_j & = \sigma_j \sigma_i \qquad\qquad \text{ if } |i-j|>1, 1 \le i,j \le n-1,\label{eqn:BrComm}\tag{$Br_{C}$} \\
\sigma_i \sigma_{i+1} \sigma_i & = \sigma_{i+1} \sigma_i \sigma_{i+1} \qquad \forall \: 1 \le i \le n-2. \label{eqn:BrYB}\tag{$Br_{YB}$}
\end{align}

The first equation means partial commutativity. The second one is a form of \emph{Yang-Baxter equation}.

The symmetric group is then the quotient of $B_n$ by
\begin{equation}\label{eqn:Symm}\tag{$Symm$}
\sigma_i^2 = 1 \qquad \forall \: 1 \le i \le n-1.
\end{equation}
Other quotients of (the group rings of) Artin braid groups are extensively studied by representation theorists. Hecke algebras give a rich example.

The link between algebraic and topological viewpoints was suggested by Emil Artin already in 1925:

\begin{theorem}\label{thm:AlgTop}
There exists an isomorphism $\varphi$ between the groups $B_n$ and $\B_n,$ given by
\begin{center}
\begin{tikzpicture}[scale=0.5]
\node at (-1,1) {$\sigma_i \mapsto $};
\draw (0,0)--(0,2);
\draw (1,0)--(1,2);
\node at (2,0) {$\cdots$};
\draw (3,0)--(3,2);
\draw [rounded corners] (5,0) -- (5,0.5) -- (4.6,0.9);
\draw [rounded corners] (4.4,1.1) -- (4,1.5) --(4,2);
\draw [rounded corners] (4,0) -- (4,0.5) -- (5,1.5) -- (5,2);
\draw (6,0)--(6,2);
\node at (7,0) {$\cdots$};
\draw (8,0)--(8,2);
\node [below] at (0,0) {$\scriptstyle 1$};
\node [below] at (1,0) {$\scriptstyle 2$};
\node [below] at (3,0) {$\scriptstyle {i-1}$};
\node [below,purple] at (4,0) {$\scriptstyle i$};
\node [below,purple] at (5,0) {$\scriptstyle{i+1}$};
\node [below] at (6,0) {$\scriptstyle {i+2}$};
\node [below] at (8,0) {$\scriptstyle n$};
\node at (9,0) {.};
\end{tikzpicture}
\end{center}
\end{theorem}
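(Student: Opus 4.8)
The plan is to produce $\varphi$ as a group homomorphism out of the presentation of $B_n$, verify surjectivity by a general-position argument, and then establish injectivity, which is the genuinely hard part. For the first step, let $\xi_i\in\B_n$ denote the geometric braid in which strand $i$ crosses over strand $i+1$ while all other strands run straight up, as drawn in the statement. By the universal property of a group defined by generators and relations, it suffices to check that the $\xi_i$ satisfy \eqref{eqn:BrComm} and \eqref{eqn:BrYB} in $\B_n$. For $|i-j|>1$ the two crossings involve disjoint sets of strands, so the stackings $\xi_i\xi_j$ and $\xi_j\xi_i$ differ by an evident height-exchanging isotopy, giving \eqref{eqn:BrComm}; the relation \eqref{eqn:BrYB} is the isotopy sliding one strand across the crossing of the other two on three adjacent strands (a third-Reidemeister-type move), which one settles by drawing both sides and exhibiting the isotopy. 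This yields a well-defined homomorphism $\varphi\colon B_n\to\B_n$.

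For surjectivity, take any geometric braid and put it in general position with respect to the height function (the $I$-coordinate of $\RR^2\times I$), so that its diagram has finitely many transverse double points occurring at pairwise distinct heights. Cutting at the intermediate heights decomposes the braid as a vertical stacking of elementary slices, each containing exactly one crossing and hence equal to some $\xi_i^{\pm1}$. The monotonicity (``always looking up'') condition is precisely what guarantees such a decomposition and, in particular, forbids local maxima and minima (cups and caps). Thus every element of $\B_n$ lies in the image of $\varphi$.

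The hard part is injectivity, which I would obtain by showing that the inverse assignment ``read off the word $\sigma_{i_1}^{\pm1}\cdots\sigma_{i_k}^{\pm1}$ from a general-position diagram'' is well defined modulo the relations of $B_n$. This rests on a Reidemeister-type theorem for braids: two diagrams represent isotopic geometric braids if and only if they are connected by planar isotopy together with the moves R2 and R3, the move R1 being excluded by monotonicity. Such a theorem follows by tracking the diagram through a generic isotopy via Morse theory and transversality: away from finitely many critical times the combinatorial type is constant, and at a critical time the diagram changes only by a height-exchange of two crossings, an R2, or an R3. It then remains to translate each move into $B_n$: a height-exchange of crossings on disjoint strands is \eqref{eqn:BrComm}, an R3 is \eqref{eqn:BrYB}, and an R2 is the relation $\sigma_i\sigma_i^{-1}=1$ that holds automatically in any group. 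Hence isotopic diagrams give equal elements of $B_n$, the inverse map $\B_n\to B_n$ is well defined, and it is two-sided inverse to $\varphi$. The delicate point, and the main obstacle, is exactly this braid Reidemeister theorem, namely the general-position bookkeeping that no other local moves can occur and that R1 is impossible.

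Alternatively, injectivity can be deduced from the faithfulness of the Artin action of $\B_n$ on the free group $F_n=\pi_1$ of the $n$-punctured disc: identifying this action with the Artin representation $B_n\to\Aut(F_n)$ and proving the latter faithful bypasses the diagrammatic moves entirely, and this route is the one best suited to the representation-theoretic spirit of the present paper.
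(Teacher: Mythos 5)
The paper does not prove this theorem at all: it is recalled as Artin's classical 1925 result, with the reader sent to \cite{Birman} and \cite{Braids} for details (the section explicitly announces that almost no proofs are given). So there is no in-paper argument to compare against; your proposal must be judged on its own, and as an outline it is the standard and correct one. The homomorphism $\varphi$ via the universal property, the general-position/slicing argument for surjectivity, and injectivity via a Reidemeister-type theorem for monotone tangles (planar isotopy, height exchanges, R2, R3, with R1 excluded by monotonicity) is exactly the textbook route, e.g.\ in Kassel--Turaev. You correctly identify that the entire difficulty is concentrated in that Reidemeister theorem, and your Morse-theory/transversality sketch of it is the right idea, though you do not carry it out; two small points deserve explicit mention if you did: (i) R3 comes in several sign variants, and only the all-positive one is literally \eqref{eqn:BrYB}, the others being consequences of it together with $\sigma_i\sigma_i^{-1}=1$ in the group; (ii) two crossings at the same height automatically involve disjoint pairs of adjacent positions, so the height exchange is indeed always an instance of \eqref{eqn:BrComm}. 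Your alternative route to injectivity via the Artin action is also legitimate and is the one most consonant with the paper (its Theorem \ref{thm:ActionOnFGn}), but beware of circularity: the faithfulness of the action of the \emph{geometric} group $\B_n$ on $\pi_1$ of the punctured disc is not what you need; you need faithfulness of the algebraically defined representation $B_n\to\Aut(F_n)$, established independently of the isomorphism being proved (e.g.\ by Dehornoy's algebraic argument, as the paper notes), together with the verification that this representation factors as $B_n\xrightarrow{\varphi}\B_n\to\Aut(F_n)$.
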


Thus the Yang-Baxter equation \eqref{eqn:BrYB} is simply the algebraic translation of the third Reidemeister move for braid diagrams:
\begin{center}
\begin{tikzpicture}[scale=0.5]
 \draw [rounded corners] (0,0) -- (2,2) -- (2,3);
 \draw (1,0) -- (0.6,0.4);
 \draw [rounded corners] (0.4,0.6) -- (0,1) -- (0,2) -- (1,3);
 \draw [rounded corners] (2,0) -- (2,1) -- (1.6,1.4);
 \draw [rounded corners] (1.4,1.6) --  (0.6,2.4);
 \draw [rounded corners] (0.4,2.6) --  (0,3);
 \node at (5,1.5) {$=$};
 \node at (8,1) {};
\end{tikzpicture}
\begin{tikzpicture}[scale=0.5]
 \draw [rounded corners] (0,0) -- (0,1) -- (2,3);
 \draw [rounded corners] (1,0) -- (2,1) -- (2,2) -- (1.6,2.4);
 \draw (1.4,2.6) -- (1,3);
 \draw (2,0) -- (1.6,0.4);
 \draw [rounded corners] (1.4,0.6) -- (0.6,1.4);
 \draw [rounded corners] (0.4,1.6) -- (0,2) -- (0,3);
 \node at (4,0) {.};
\end{tikzpicture}
\end{center}

\subsection{Positive braids: little brothers}

In some contexts it is interesting to regard braid diagrams having positive crossings
\begin{tikzpicture}[scale=0.5]
 \draw (0,0) -- (1,1);
 \draw (0,1) -- (0.3,0.7);
 \draw (1,0) -- (0.7,0.3);
\end{tikzpicture}
only. The braids represented by such diagrams are called \emph{positive}. Their interest resides, among other properties, in the fact that every braid is a (non-commutative) quotient of two positive braids. Admitting no inverses, they form a monoid only. The algebraic counterpart is \emph{the positive braid monoid} $B_n^+.$  It is generated - as a monoid this time! - by $\sigma_1, \sigma_2, \ldots,\sigma_{n-1},$ with the same relations as those defining $B_n.$ One can show that this is a submonoid of $B_n.$  A ``positive" analogue of theorem \ref{thm:AlgTop} is obvious.

\begin{remark}\label{rmk:negative}
Considering crossings
\begin{tikzpicture}[scale=0.5]
 \draw (0,1) -- (1,0);
 \draw (0,0) -- (0.3,0.3);
 \draw (1,1) -- (0.7,0.7);
\end{tikzpicture} instead, one gets the notions of negative braids and negative braid monoid $B_n^-,$ isomorphic to $B_n^+$ via the obvious monoid map $\sigma_i^{-1}\mapsto \sigma_i.$
\end{remark}

\subsection{Representations: a full wardrobe}\label{sec:reps}

Algebraic structures, even those admitting easy descriptions, are often difficult to study using algebraic tools only. Even comparing two elements of a group defined by generations and relations can be a hard task. A recurrent solution is exploring representations (linear or more general) of algebraic objects instead, i.e., in a metaphorical language, looking for fitting clothes. Free actions are of particular interest, since they allow one to easily distinguish different elements. The corresponding concept on the topological level is that of invariants.

Note that symmetric groups are even \underline{defined} via their action on a set. This suggests the importance of braid group representations, two of which are recalled here.

\medskip

The first one was discovered by Burau as early as in 1936.
\begin{proposition}
An action of the braid group $B_n$ on $\ZZ[t^{\pm 1}]^{\otimes n}$ can be given, in the matrix form, by 
\begin{align}
\rho(\sigma_i)&=\begin{pmatrix} I_{i-1} & 0&0&0 \\ 0 &0&1&0\\0&t&1-t&0\\0&0&0&I_{n-i-1}\end{pmatrix},\label{eqn:Burau}\\
\rho(\sigma_i^{-1})&=\begin{pmatrix} I_{i-1} & 0&0&0 \\0&1-t^{-1}&t^{-1}&0\\ 0 &1&0&0\\0&0&0&I_{n-i-1}\end{pmatrix}.\notag
\end{align}
\end{proposition}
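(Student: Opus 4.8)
The plan is to exploit the presentation of $B_n$ recalled just above: since $B_n$ is the group generated by $\sigma_1,\dots,\sigma_{n-1}$ subject only to the relations \eqref{eqn:BrComm} and \eqref{eqn:BrYB}, von Dyck's theorem (the universal property of a group presentation) guarantees that any assignment $\sigma_i\mapsto M_i$ into invertible matrices over $\ZZ[t^{\pm1}]$ extends uniquely to a group homomorphism $\rho\colon B_n\to GL_n(\ZZ[t^{\pm1}])$ — hence to the claimed action on the rank-$n$ free module $\ZZ[t^{\pm1}]^{\otimes n}$ — provided the $M_i$ satisfy those same two relations. Thus the entire proof reduces to three mechanical verifications on the explicit matrices displayed for $\rho(\sigma_i)$.

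First I would confirm that each $\rho(\sigma_i)$ is invertible with inverse the displayed $\rho(\sigma_i^{-1})$. Since both matrices differ from the identity only in the $2\times2$ block occupying rows and columns $i,i+1$, this collapses to checking that $\left(\begin{smallmatrix}0&1\\t&1-t\end{smallmatrix}\right)$ and $\left(\begin{smallmatrix}1-t^{-1}&t^{-1}\\1&0\end{smallmatrix}\right)$ are mutually inverse, a one-line multiplication. Second, for $|i-j|>1$ the altered blocks of $\rho(\sigma_i)$ and $\rho(\sigma_j)$ sit in disjoint coordinate pairs, so the two matrices act on complementary summands and commute tautologically; this is exactly relation \eqref{eqn:BrComm}.

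The only genuinely computational point — and therefore the main, though still routine, obstacle — is the Yang--Baxter relation \eqref{eqn:BrYB}. Here $\rho(\sigma_i)$ and $\rho(\sigma_{i+1})$ are nontrivial only on the three consecutive coordinates $i,i+1,i+2$, so the asserted identity $\rho(\sigma_i)\rho(\sigma_{i+1})\rho(\sigma_i)=\rho(\sigma_{i+1})\rho(\sigma_i)\rho(\sigma_{i+1})$ restricts to an identity of $3\times3$ matrices, every other block being the identity. I would simply multiply out both triple products of $\left(\begin{smallmatrix}0&1&0\\t&1-t&0\\0&0&1\end{smallmatrix}\right)$ and $\left(\begin{smallmatrix}1&0&0\\0&0&1\\0&t&1-t\end{smallmatrix}\right)$ and observe that each equals $\left(\begin{smallmatrix}0&0&1\\0&t&1-t\\t^2&t(1-t)&1-t\end{smallmatrix}\right)$. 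Once these three checks are in place the homomorphism $\rho$ exists by the universal property, and the displayed formula for $\rho(\sigma_i^{-1})$ simultaneously records its value on the inverse generators, completing the argument.
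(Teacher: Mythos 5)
Your verification is correct and complete: the two displayed $2\times2$ blocks are indeed mutually inverse (the block $\bigl(\begin{smallmatrix}0&1\\t&1-t\end{smallmatrix}\bigr)$ has determinant $-t$, a unit in $\ZZ[t^{\pm1}]$, and its inverse is exactly $\bigl(\begin{smallmatrix}1-t^{-1}&t^{-1}\\1&0\end{smallmatrix}\bigr)$), the far-commutation relation \eqref{eqn:BrComm} is automatic from the disjointness of the modified coordinate blocks, and your triple products $\rho(\sigma_i)\rho(\sigma_{i+1})\rho(\sigma_i)$ and $\rho(\sigma_{i+1})\rho(\sigma_i)\rho(\sigma_{i+1})$ both equal $\bigl(\begin{smallmatrix}0&0&1\\0&t&1-t\\t^2&t(1-t)&1-t\end{smallmatrix}\bigr)$ on the relevant $3\times3$ block, so the universal property of the presentation does the rest. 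The paper itself recalls this proposition without proof, as a classical fact; but it is worth noting that the paper's own machinery yields a more conceptual argument, made explicit in example \ref{ex:AlexBurau}: the matrix $\bigl(\begin{smallmatrix}0&1\\t&1-t\end{smallmatrix}\bigr)$ is precisely the linearization of the Alexander quandle braiding $\sigma_{S,\lhd}(a,b)=(b,\,ta+(1-t)b)$ from \eqref{eqn:RackBraid}, so the Yang--Baxter relation — the only computation in your argument that is not a triviality — follows for free from lemma \ref{thm:shelf} once one checks that $a\lhd b=ta+(1-t)b$ satisfies the self-distributivity and rack axioms, which is a shorter and more structured calculation than multiplying $3\times3$ matrices. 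Your route is more elementary and self-contained; the paper's route explains \emph{why} the identity holds and situates Burau as one instance of the general rack-to-braiding construction.
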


This representation is interesting from the topological viewpoint: the Alexander polynomial, a famous knot invariant, can be interpreted as $\det(\II - \rho_*(b)),$ where $b$ is any braid whose closure gives the knot in question, and $\rho_*$ is (a reduced version of) Burau representation.

Burau representation, conjectured to be faithful for a long time, turns out not to be so for $n \ge 5.$ A faithful linear representation was found later by Lawrence, Krammer and Bigelow.

\medskip

Another representation - a faithful one this time - was already known by Artin. See \cite{Birman} or \cite{FR} for a topological proof, or \cite{Dehornoy} for a short algebraic one.

\begin{theorem}\label{thm:ActionOnFGn}
Denote by $F_n$ the free group with $n$ generators $x_1, \ldots, x_n.$ The braid group $B_n$ faithfully acts on $F_n$ according to the formulas
$$\sigma_i (x_j) = \begin{cases}
 x_{i} & \text{if } j=i+1, \\
 x_{i} x_{i+1} x_{i}^{-1} & \text{if } j=i, \\
 x_j & \text{otherwise;} 
\end{cases} \qquad \sigma_i^{-1} (x_j) = \begin{cases}
 x_{i+1} & \text{if } j=i, \\
 x_{i+1}^{-1} x_i x_{i+1} & \text{if } j=i+1, \\
 x_j & \text{otherwise.} 
\end{cases}$$
\end{theorem}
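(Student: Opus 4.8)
The plan is to split the statement into two parts: first, that the assignment on generators extends to a well-defined homomorphism $B_n \to \Aut(F_n)$, and second, that this homomorphism is injective.

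For well-definedness, I would first check that each $\sigma_i$ is sent to an honest automorphism. Since an endomorphism of a free group is determined by the images of the generators, it suffices to compose the displayed formulas for $\sigma_i$ and $\sigma_i^{-1}$ and verify that each generator $x_j$ returns to itself; this is an immediate generator-by-generator computation. Next I would verify that the braid relations \eqref{eqn:BrComm} and \eqref{eqn:BrYB} are respected. Again, because automorphisms are determined on generators, each relation reduces to a finite check: the commutation relation only moves generators with indices in $\{i,i+1,j,j+1\}$, and the Yang--Baxter relation only moves $x_i, x_{i+1}, x_{i+2}$, so evaluating both sides on this handful of generators and comparing the resulting reduced words settles it. The product $x_1 \cdots x_n$ is preserved by every $\sigma_i$, which is a useful invariant to record along the way.

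The hard part is faithfulness. Here I would pass to the topological picture. Realize $B_n$ as the mapping class group of the disc $D$ with $n$ marked points (homeomorphisms fixing $\partial D$ pointwise, up to isotopy rel $\partial D$), under which $\sigma_i$ becomes the half-twist swapping the $i$-th and $(i+1)$-st punctures. The fundamental group of the punctured disc is free on loops $x_1, \ldots, x_n$ encircling the punctures, and one checks that the induced action of the half-twist on these loops agrees exactly with the displayed formulas for $\sigma_i$, thereby matching the algebraic and topological descriptions. Faithfulness then amounts to the statement that a boundary-fixing self-homeomorphism of the punctured disc inducing the identity on $\pi_1$ is isotopic rel boundary to the identity, which holds because the punctured disc is a $K(\pi,1)$ whose boundary-based homotopy type is rigidly determined by $\pi_1$.

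The main obstacle is precisely this last isotopy statement: it is the genuinely topological input, and making it rigorous requires the theory of mapping class groups of surfaces with boundary (equivalently, injectivity of the Dehn--Nielsen--Baer map in the bounded setting). For a purely algebraic alternative I would instead follow the route indicated in \cite{Dehornoy}: first reduce to the pure braid group, using that the permutation attached to $\beta$ (read off from how $\beta$ permutes the conjugacy classes of the $x_j$) must be trivial, and then exploit the iterated semidirect-product decomposition $P_n \cong F_{n-1} \rtimes F_{n-2} \rtimes \cdots \rtimes F_1$ to peel off factors and detect any nontrivial pure braid through its action on a single generator. Either way, the well-definedness is routine bookkeeping, and the real content lies in certifying that no nontrivial braid can act invisibly on $F_n$.
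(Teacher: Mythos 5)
The paper does not actually prove this theorem: it is stated as a classical result with the remark ``See \cite{Birman} or \cite{FR} for a topological proof, or \cite{Dehornoy} for a short algebraic one,'' so there is no in-paper argument to compare against line by line. Your outline is consistent with exactly those two cited routes. The well-definedness half is correct and genuinely routine: the generator-by-generator verification that $\sigma_i$ and $\sigma_i^{-1}$ are mutually inverse, and the finite checks of \eqref{eqn:BrComm} and \eqref{eqn:BrYB} on the handful of moved generators, all go through (and the invariance of $x_1\cdots x_n$ is a correct and standard observation). For faithfulness you correctly locate the entire difficulty in a single statement — that a boundary-fixing self-homeomorphism of the $n$-punctured disc acting trivially on $\pi_1$ is isotopic rel boundary to the identity — and you are right that this is the genuinely topological input; as written you invoke it rather than prove it, which puts your proof at the same level of completeness as the paper's citation. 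One small correction of attribution: the ``short algebraic'' proof in \cite{Dehornoy} does not proceed by peeling off the semidirect-product factors of $P_n$; it uses the comparison/ordering property of braids (every nontrivial braid is $\sigma$-positive or $\sigma$-negative) together with a direct argument that a $\sigma$-positive braid moves some $x_j$. The reduction to $P_n$ followed by the decomposition $P_n\cong F_{n-1}\rtimes\cdots\rtimes F_1$ is closer to the classical Artin--Birman combinatorial proof, and while it is a legitimate path, carrying it out is not shorter than the topological one. So: your proposal is a correct skeleton matching the standard references, with the hard step honestly flagged but not discharged.
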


These two seemingly different representations will be interpreted as particular cases of a much more general one described in the next section.

\subsection{Shelves and racks: arranging the wardrobe}\label{sec:racks}

Here we give an example of a topological idea inspiring important algebraic structures -- that of shelves, racks and quandles -- with an extremely rich representation theory. 

For a detailed introduction to the theory of racks and quandles, as well as for numerous examples, we send the reader to seminal papers \cite{Joyce} and \cite{Matveev}, or to \cite{Kamada} and \cite{Crans} for very readable surveys. The term ``shelf" is in particular borrowed from Crans's thesis \cite{Crans}. The connections between racks and braids are explored in detail in \cite{FR}. As for free self-distributive structures applied to braids, \cite{Dehornoy} and \cite{KasselDehornoy} are nice sources.

\begin{definition}
A \emph{shelf} is a set $S$ with a binary operation $ \lhd : S \times S \longrightarrow S$ satisfying the self-distributivity condition
\begin{equation} \label{eqn:SelfDistr}\tag{SD}
  (a\lhd b)\lhd c = (a\lhd c)\lhd(b\lhd c) \qquad \forall a,b,c \in S.
\end{equation} 
If moreover the application $a\mapsto a\lhd b$ is a bijection on $S$ for every $b \in S,$ that is if there exists an ``inverse" application $\widetilde{\lhd}: S \times S\longrightarrow S$ such that
\begin{equation}\label{eqn:Rack}\tag{R}
  (a\lhd b)\widetilde{\lhd} b = (a\widetilde{\lhd} b)\lhd b = a \qquad \forall a,b \in S,
\end{equation} 
then the couple $(S,\lhd)$ is called \emph{a rack}.
A \emph{quandle} is a rack satisfying moreover
\begin{equation}\label{eqn:Quandle}\tag{Q}
  a\lhd a = a \qquad \forall a \in S.
\end{equation} 
\end{definition}

The term \emph{SD structures} will be used here to refer to any of these three structures, underlining the importance of relation \eqref{eqn:SelfDistr}.

There are numerous examples of SD structures coming from various areas of mathematics. Only several of them are relevant here, allowing us to recover, after a short categorical reminder in the next section, 
the two actions from section \ref{sec:reps}. Free SD structures are also presented here for further use.

\begin{enumerate}
\item The one-element shelf is necessarily a quandle, called the \emph{trivial quandle}.
\item A group $G$ can be endowed with, among others, a \emph{conjugation quandle} structure:
\begin{align*}
a \lhd b & := b^{-1}ab,\\
a \wlhd b &:= bab^{-1}.
\end{align*}
This quandle is denoted by $\Conj(G).$
Morally, the quandle structure captures the properties of conjugation in a group, forgetting the multiplication structure it comes from. This is the motivating example on the algebra side.
\item The \emph{Alexander quandle} is the set $\ZZ[t^{\pm 1}]$ with the operations
\begin{align}
a \lhd b &= ta + (1-t)b,\label{eqn:Alex}\\
a \wlhd b &= t^{-1}a + (1-t^{-1})b.\notag
\end{align}
\item The \emph{cyclic rack} $CR$ is the set of integer numbers $\ZZ$ with the operations
\begin{align*}
n \lhd m &= n+1 \qquad \forall n,m \in \ZZ,\\
n \wlhd m &= n-1 \qquad \forall n,m \in \ZZ.
\end{align*}
Note that it is very far from being a quandle: the property \eqref{eqn:Quandle} is false for all the elements. Moreover, the quotient of $\ZZ$ by \eqref{eqn:Quandle} is the trivial quandle. This somewhat exotic structure will be interpreted in the next example.
\item Given a set $X,$ the \emph{free shelf, rack or quandle} on $X$ are denoted by $FS(X),FR(X),FQ(X)$ respectively. This is abbreviated to
$$FS_n:=FS(\{x_1,\ldots,x_n\}),$$
$$FS_{\ZZ}:=FS(\{x_i, i\in \ZZ \}),$$
and similar for racks and quandles.

 Free shelves, even generated by one element only, are extremely complicated structures. They have in particular allowed Dehornoy to construct in the early 90's a total left-invariant group order on $B_n$ (see \cite{KasselDehornoy} or \cite{Dehornoy} for a nice exposition). Monogenerated quandles are trivial, since $x \lhd x = x$ for the generator $x.$ As for racks, which are intermediate objects between shelves and quandles, the structure is quite simple but not trivial:
\begin{align}\label{ref:FR1}
FR_1 &\stackrel{\sim}{\longrightarrow} CR,\\
((x \lhd \ x) \lhd \cdots ) \lhd x &\longmapsto n,\notag\\
((x \wlhd \ x) \wlhd \cdots) \wlhd x &\longmapsto -n,\notag
\end{align}
where $n$ is the number of operations $\lhd$ (resp. $\wlhd$) in the expression.

Note that $FQ(X)$ becomes interesting for larger sets $X.$ In particular, $FQ_n$ can be described via the quandle injection
\begin{align}
FQ_n & {\hookrightarrow} \Conj(F_n),\label{eqn:FQ}\\
x_i &\longmapsto x_i.\notag
\end{align}
The image of this injection is the sub-quandle of $\Conj(F_n)$ generated by the $x_i$'s. To see this, remark that
\begin{itemize}
\item  any element of $FQ_n$ can be written in the form $$((x_{i_0} \lhd^{\varepsilon_1} x_{i_1})\lhd^{\varepsilon_2} \cdots )\lhd^{\varepsilon_k} x_{i_k},$$ where the values of the $\varepsilon_j$'s are $\pm 1$'s, with notations $$\lhd^{1} = \lhd,\qquad \lhd^{-1} = \wlhd;$$
\item in $\Conj(G),$ one has $a \wlhd b = a \lhd b^{-1}.$
\end{itemize}
\end{enumerate}

The theory of racks and quandles owes its rising popularity to topological applications: it allows to upgrade the fundamental group of the complement of a knot to a complete invariant (up to a symmetry). The connection to groups is clear from the example of the conjugation quandle. The connection to knots and braids is illustrated by the following well-known result, which will be better explained later.

\begin{proposition}\label{thm:SDaction}
Take a shelf $(S,\lhd).$ An action of the positive braid monoid $B_n^+$ on $S^{\times n}$ can be given as follows:
$$\sigma_i(a_1,\ldots,a_n)= (a_1,\ldots,a_{i-1},a_{i+1},a_{i} \lhd a_{i+1},a_{i+2},\ldots,a_n).$$
If $S$ is a rack, then this action becomes a braid group action:
$$\sigma_i^{-1}(a_1,\ldots,a_n)= (a_1,\ldots,a_{i-1},a_{i+1}\widetilde{\lhd} a_{i},a_{i},a_{i+2},\ldots,a_n).$$
More precisely, these formulas define an action of $B_n^+$ (resp. $B_n$) if and only if the couple $(S,\lhd)$ satisfies the shelf (resp. rack) axioms.
\end{proposition}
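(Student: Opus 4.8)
The plan is to exploit the presentation of $B_n^+$ by the generators $\sigma_1,\dots,\sigma_{n-1}$ subject to the relations \eqref{eqn:BrComm} and \eqref{eqn:BrYB}. By the universal property of a monoid presentation, a family of self-maps $s_i$ of $S^{\times n}$ extends to a monoid homomorphism $B_n^+\to\End_{\Set}(S^{\times n})$ precisely when the $s_i$ satisfy those two relations; I take $s_i$ to be the map displayed in the statement. The whole proposition thus reduces to checking \eqref{eqn:BrComm} and \eqref{eqn:BrYB} for the $s_i$ and to matching these with the shelf/rack axioms. The commutation relation \eqref{eqn:BrComm} is immediate: for $|i-j|>1$ the maps $s_i$ and $s_j$ alter disjoint pairs of adjacent coordinates, so they commute on the nose, with no hypothesis on $\lhd$ whatsoever; it therefore imposes no constraint.

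The heart of the argument is the braid relation \eqref{eqn:BrYB}, which only touches three consecutive strands, so it suffices to work on $S^{\times 3}$ with $i=1$. Applying the maps to a triple $(a,b,c)$, with the convention that in a product the rightmost factor acts first, one computes
\begin{align*}
s_1 s_2 s_1(a,b,c) &= \bigl(c,\; b\lhd c,\; (a\lhd b)\lhd c\bigr),\\
s_2 s_1 s_2(a,b,c) &= \bigl(c,\; b\lhd c,\; (a\lhd c)\lhd(b\lhd c)\bigr).
\end{align*}
The first two coordinates agree automatically, and the two sides coincide for all $a,b,c$ exactly when $(a\lhd b)\lhd c=(a\lhd c)\lhd(b\lhd c)$, i.e. when \eqref{eqn:SelfDistr} holds. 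This settles both implications at once: if $(S,\lhd)$ is a shelf the relations are satisfied and the $s_i$ define a $B_n^+$-action, while conversely (for $n\ge 3$) any such action forces \eqref{eqn:BrYB} as an identity of maps, whose evaluation on arbitrary triples returns \eqref{eqn:SelfDistr}.

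For the rack statement I would upgrade the $B_n^+$-action to a $B_n$-action. Since $B_n$ admits the same presentation as $B_n^+$ together with formal inverses of the $\sigma_i$, it suffices to know that each $s_i$ is a bijection of $S^{\times n}$, the braid relations being already secured by \eqref{eqn:SelfDistr}. A direct check on an adjacent pair shows that the displayed formula for $\sigma_i^{-1}$, namely $(a,b)\mapsto (b\wlhd a,\, a)$, is a two-sided inverse of $s_i$: the rack axiom \eqref{eqn:Rack} gives $(b\wlhd a)\lhd a=b$ and $(a\lhd b)\wlhd b=a$, which collapse both composites to the identity. Conversely, bijectivity of $s_1$ on $S^{\times 2}$ is equivalent, upon solving $s_1(a,b)=(c,d)$, to the requirement that $a\mapsto a\lhd b$ be a bijection for every $b$ — that is, to \eqref{eqn:Rack}. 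Hence the two formulas define a $B_n$-action if and only if $(S,\lhd)$ is a rack.

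I expect no serious obstacle: the computation is mechanical once the composition convention is fixed. The only points demanding care are keeping the order of composition consistent throughout (the opposite convention merely swaps the two sides of \eqref{eqn:BrYB} and leaves the conclusion unchanged) and noting that the ``only if'' direction genuinely needs $n\ge 3$ for the braid relation to be present, so that for small $n$ the statement is to be read as the compatibility of the whole family of actions.
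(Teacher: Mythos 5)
Your proof is correct, and all the computations check out: the reduction to three strands, the two composites $(c,\,b\lhd c,\,(a\lhd b)\lhd c)$ versus $(c,\,b\lhd c,\,(a\lhd c)\lhd(b\lhd c))$, the equivalence of bijectivity of $s_i$ with the rack axiom \eqref{eqn:Rack}, and the caveat that the ``only if'' direction needs $n\ge 3$. The paper itself writes out no proof of proposition \ref{thm:SDaction}: it declares the result well-known and ``better explained later,'' the later explanation being the categorical route --- lemma \ref{thm:shelf} exhibits $\sigma_{S,\lhd}(a,b)=(b,a\lhd b)$ as a (weak) specific braiding on the object $S$ of $\Set$ if and only if $(S,\lhd)$ is a rack (resp.\ shelf), and the corollary to theorem \ref{thm:AlgCat} then converts any braided object into a $B_n$-action on its tensor (here Cartesian) powers. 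Your direct verification is precisely the computation hiding inside that lemma, so the two approaches have the same mathematical content; what the categorical detour buys the paper is reusability (the same lemma later feeds the virtual braid group actions of proposition \ref{thm:VBActsOnRacks} and the generalized self-distributive structures of section 4), while your version is self-contained, makes the ``if and only if'' clause explicit in both directions, and correctly isolates the only delicate points (the composition convention, which is immaterial since both defining relations are palindromic, and the need for $n\ge 3$).
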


The racks are thus the ``right" structure for carrying a braid group action.

This action is diagrammatically depicted via braid coloring:

\begin{center}
\begin{tikzpicture}[scale=0.5]
 \draw [->, thick]  (-4,0.2) -- (-4,0.8);
 \draw (0,0) -- (1,1);
 \draw (1,0) -- (0.6,0.4);
 \draw (0.4,0.6) -- (0,1);
 \node at (1,0) [below] {$b$};
 \node at (0,0) [below] {$a$};
 \node at (0,1) [above] {$b$}; 
 \node at (1,1) [above right] {$a \lhd b$};
 \node at (4,1) {};
\end{tikzpicture}
\begin{tikzpicture}[scale=0.5]
 \draw (1,0) -- (0,1);
 \draw (0,0) -- (0.4,0.4);
 \draw (0.6,0.6) -- (1,1);
 \node at (1,0) [below] {$b$};
 \node at (0,0) [below] {$a$};
 \node at (0,1) [above] {$b \wlhd a$}; 
 \node at (1,1) [above right] {$a$};
 \node at (2,0)  {.};
\end{tikzpicture}
\end{center}

Thus when a strand passes under another one, the corresponding element of $S$ acts by $\lhd$ or $\wlhd$ on the element on the higher strand. In some references it happens the other way around, which is simply a matter of choice: one could have interchanged the actions of the $\sigma_i$'s and their inverses.

Note that, with our choice of composing braids from bottom to top, the actions will always be depicted from bottom to top here, which is indicated by the arrow on the above diagram.

Let us finish with some faithfulness remarks for free SD structures.

\begin{proposition}\label{thm:FSActsFreely}
The action of the positive braid monoid $B_n^+$ on $FS_1^{\times n}$ is free.
\end{proposition}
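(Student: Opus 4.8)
The plan is to prove the strong form of freeness for a monoid action: every orbit map $\beta \mapsto \beta\cdot v$ on $FS_1^{\times n}$ is injective, so that distinct positive braids are distinguished by their action on any fixed colouring $v=(a_1,\dots,a_n)$. Throughout I would use Proposition \ref{thm:SDaction}, which guarantees the action is well defined and realises $\beta\cdot v$ as the top colouring of the braid diagram whose bottom is coloured by $v$.

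First I would extract a length invariant. Composing coordinatewise with the universal rack quotient $FS_1\twoheadrightarrow FR_1\cong CR=\ZZ$ (the isomorphism \eqref{ref:FR1}) gives a $B_n^+$-equivariant map $FS_1^{\times n}\to\ZZ^n$, since this quotient is a shelf morphism. On $\ZZ^n$ the generator $\sigma_i$ sends $(c_1,\dots,c_n)$ to $(\dots,c_{i+1},c_i+1,\dots)$, because $n\lhd m=n+1$ in $CR$; hence each crossing raises the sum of the coordinates by exactly $1$, so this sum increases by the word length $\ell(\beta)$ (which is well defined on $B_n^+$, the defining relations being length preserving). Consequently $\beta\cdot v=\beta'\cdot v$ forces $\ell(\beta)=\ell(\beta')$, and in particular $\beta\cdot v=v$ forces $\ell(\beta)=0$, i.e. $\beta=1$: every stabiliser is already trivial, which settles the case $n=2$ (where $B_2^+\cong\NN$) outright.

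For $n\ge 3$ I would run an induction on the common length $\ell:=\ell(\beta)=\ell(\beta')$. The inductive step rests on two facts about $FS_1$. The first is that right translation $c\mapsto c\lhd b$ is injective on the free shelf; granting this, the map $u\mapsto\sigma_i\cdot u$ on tuples is injective, since from $\sigma_i\cdot u$ one reads off $u_{i+1}$ (coordinate $i$) and then recovers $u_i$ as the unique solution of $p\lhd u_{i+1}=(\sigma_i\cdot u)_{i+1}$. The second, and decisive, fact is that from the tuple $w:=\beta\cdot v$ one can single out --- canonically, using only $w$ --- an index $i$ such that $\sigma_i$ left-divides every positive braid $\gamma$ with $\gamma\cdot v=w$. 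Given these, write $\beta=\sigma_i\gamma$ and $\beta'=\sigma_i\gamma'$ for the $w$-determined index $i$; injectivity of $u\mapsto\sigma_i\cdot u$ yields $\gamma\cdot v=\gamma'\cdot v$, both of length $\ell-1$, so the induction hypothesis gives $\gamma=\gamma'$ and hence $\beta=\sigma_i\gamma=\sigma_i\gamma'=\beta'$.

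The hard part will be locating the common left divisor $\sigma_i$ from $w$ alone. Topologically this amounts to detecting, purely from the top colouring, a crossing that can be slid to the top of the diagram; algebraically it requires controlling the ways in which a free-shelf element can be parsed as $p\lhd q$, the outer right operand being exactly the colour carried by the over-strand of that topmost crossing. This is governed by the structural theory of the free monogenerated shelf --- the comparison (acyclicity) property of Laver and Dehornoy, equivalently the left-invariant linear order on $FS_1$ --- which also yields the right-cancellativity used above; see \cite{Dehornoy} and \cite{KasselDehornoy}. I expect this step to be the sole genuine obstacle: once the comparison property is invoked, the length bookkeeping and the induction are routine. One caveat I would keep in mind is that the first letter of a word is \emph{not} a braid invariant, so the argument must work with the $w$-determined divisor rather than with a literal ``first crossing''; this is precisely why the canonical choice furnished by the order is needed.
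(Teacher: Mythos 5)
Your reduction steps are fine as far as they go: the length invariant via the shelf morphism $FS_1\twoheadrightarrow FR_1\cong CR$ is correct (word length is well defined on $B_n^+$ because the defining relations are homogeneous), the $n=2$ case does follow, and the injectivity of $u\mapsto\sigma_i\cdot u$ does follow from right cancellativity of $FS_1$, which Dehornoy indeed proves. But the argument has a genuine gap exactly where you flag it, and flagging it is not the same as closing it. Your induction needs the following statement: from $w=\beta\cdot v$ alone one can select an index $i$ such that \emph{every} positive braid $\gamma$ with $\gamma\cdot v=w$ is left-divisible by $\sigma_i$. This is not a routine consequence of the comparison (acyclicity) property; it is essentially equivalent to the proposition itself. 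If two distinct braids $\gamma\neq\gamma'$ had $\gamma\cdot v=\gamma'\cdot v$, nothing you have established prevents their sets of left-dividing generators from being disjoint, in which case no choice of $i$ lets the induction proceed. Even the weaker task --- reading off from $w$ a left divisor of one given $\gamma$ --- already requires a genuine analysis of how elements of $FS_1$ decompose as $p\lhd q$ and which of several admissible indices is ``canonical''; you invoke the order to do this but never carry out the argument. So what you have is a correct skeleton with the load-bearing lemma missing.

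For comparison, the paper's proof avoids this difficulty entirely by citing a stronger packaged result of Dehornoy: the total order $<$ on $FS_1$ (generated by $b\prec c\lhd b$) induces an anti-lexicographic order on $FS_1^{\times n}$, and this in turn induces a \emph{total} order on $B_n$ characterized by $\alpha<\beta\iff\alpha(\overline{a})<\beta(\overline{a})$ for all admissible $\overline{a}$. Freeness is then immediate from trichotomy: if $\alpha\neq\beta$ then one of $\alpha<\beta$ or $\beta<\alpha$ holds, so $\alpha(\overline{a})\neq\beta(\overline{a})$ for every $\overline{a}$. The deep content (acyclicity plus the compatibility of the order with the action, which is where your ``common left divisor'' difficulty is hidden) is outsourced to \cite{Dehornoy} and \cite{KasselDehornoy}. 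If you want to keep your inductive strategy, you would need to extract from those references the precise statement that positive braids acting identically on some tuple admit a common left-dividing generator determined by the output --- or, more economically, just quote the ordering theorem as the paper does.
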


\begin{proof}
This is an easy consequence of Dehornoy's results on the ordering of free shelves and braid groups (cf. \cite{KasselDehornoy} and \cite{Dehornoy}). He shows that $FS_1$ has a total order $<$ generated by the partial order $\prec$:
 $$a = c \lhd b \;\Longrightarrow \; b \prec a.$$ This induces an anti-lexicographic order $<$ on $FS_1^{\times n}.$ (The prefix ``anti" is needed here since we consider the right version of self-distributivity, while Dehornoy works with the left one.) Then he proves that this order is sufficiently nice to induce a total order on the group $B_n$ by declaring, for $\alpha, \beta \in B_n,$
$$\alpha < \beta \; \Longleftrightarrow\; \forall \overline{a} \in FS_1^{\times n}, \; \alpha(\overline{a}) < \beta (\overline{a}).$$
Here $\alpha(\overline{a})$ and $\beta (\overline{a})$ denote a partial extension to $B_n$ of the action of $B_n^+,$ and one takes only those $\overline{a}$ for which $\alpha(\overline{a})$ and $\beta (\overline{a})$ are both defined (they exist).
 Thus, if $\alpha$ and $\beta$ act on an element of $FS_1^{\times n}$ in the same way, this means precisely $\alpha = \beta.$
\end{proof}

\begin{proposition}\label{thm:FQActsFaith}
The action of the braid group $B_n$ on $FQ_n^{\times n}$ is faithful.
\end{proposition}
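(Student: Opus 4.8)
The plan is to deduce faithfulness from the already-established faithfulness of the Artin action on $F_n$ (Theorem \ref{thm:ActionOnFGn}), exploiting the quandle injection \eqref{eqn:FQ}. Since faithfulness follows as soon as a \emph{single} point has trivial stabilizer, I would work with the distinguished tuple $\overline{x}=(x_1,\dots,x_n)\in FQ_n^{\times n}$ and prove that the orbit map $\beta\mapsto\beta(\overline{x})$ is injective. Through \eqref{eqn:FQ} each $\beta(\overline{x})$ is read as an $n$-tuple of conjugates of generators inside $\Conj(F_n)$; this is legitimate because the embedding is a quandle morphism, hence $B_n$-equivariant, so the action computed in $\Conj(F_n)^{\times n}$ coincides with the one in $FQ_n^{\times n}$.

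The bridge to the Artin action is the following bookkeeping device. To a tuple $(g_1,\dots,g_n)\in F_n^{\times n}$ I associate the endomorphism $\phi_{(g)}$ of $F_n$ determined by $x_j\mapsto g_j$ (well defined by freeness of $F_n$). Using $a\lhd b=b^{-1}ab$ in $\Conj(F_n)$ together with the defining formula of Proposition \ref{thm:SDaction}, a direct check on generators yields the crucial identity $\phi_{\sigma_i(g)}=\phi_{(g)}\circ(\sigma_i^{-1})_A$, where $(\,\cdot\,)_A$ denotes the Artin automorphism of Theorem \ref{thm:ActionOnFGn}: applying $\sigma_i$ to a tuple amounts to \emph{right-composing} the associated endomorphism with $(\sigma_i^{-1})_A$ (one reads off $x_i\mapsto x_{i+1}$ and $x_{i+1}\mapsto x_{i+1}^{-1}x_i x_{i+1}$, which is exactly the formula for $\sigma_i^{-1}$ in that theorem, and symmetrically $\phi_{\sigma_i^{-1}(g)}=\phi_{(g)}\circ(\sigma_i)_A$). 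Starting from $\phi_{\overline{x}}=\Id$ and iterating along a word $\beta=\sigma_{i_1}^{\varepsilon_1}\cdots\sigma_{i_k}^{\varepsilon_k}$ (innermost generator acting first), the successive right-compositions telescope, via the homomorphism property of the Artin action, into $\phi_{\beta(\overline{x})}=(\beta^{-1})_A$.

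From here the conclusion is immediate: the tuple $\beta(\overline{x})$ determines the endomorphism $\phi_{\beta(\overline{x})}=(\beta^{-1})_A$, and the assignment $\beta\mapsto(\beta^{-1})_A$ is injective, since $\beta\mapsto\beta^{-1}$ is a bijection of $B_n$ and the Artin action is faithful. Hence $\beta\mapsto\beta(\overline{x})$ is injective, the stabilizer of $\overline{x}$ is trivial, and the $B_n$-action on $FQ_n^{\times n}$ is faithful (in fact free at $\overline{x}$). I expect the only genuine obstacle to be the careful handling of conventions in the iteration step — left versus right action, the inversion $\sigma_i\leftrightarrow(\sigma_i^{-1})_A$, and the order in which the compositions accumulate so that the resulting product truly reassembles into $\beta^{-1}$ rather than $\beta$; once the single generator identity is pinned down, the remainder is a mechanical, if convention-sensitive, induction.
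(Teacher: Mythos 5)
Your proposal is correct and follows essentially the same route as the paper: the paper's one-line proof rests on exactly the identity $\beta(x_1,\dots,x_n)=(\beta^{-1}(x_1),\dots,\beta^{-1}(x_n))$ (read inside $\Conj(F_n)$ via \eqref{eqn:FQ}) that you derive through the generator computation $\phi_{\sigma_i(g)}=\phi_{(g)}\circ(\sigma_i^{-1})_A$ and the telescoping induction, and then concludes by faithfulness of the Artin action. Your version merely makes explicit the convention-checking that the paper leaves to the reader.
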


\begin{proof}
One remarks, for a braid $\alpha \in B_n,$ the identity
$$\alpha(x_1, \ldots, x_n) = (\alpha^{-1}(x_1), \ldots, \alpha^{-1}(x_n)),$$
where the action on the right is that of theorem \ref{thm:ActionOnFGn}, which is faithful, thus permitting to conclude.
\end{proof}

Remark that this action is not free, since, for instance, the elements $1$ and $\sigma_1$ of $B_n$ act in the same way on diagonal elements $(a,a,\ldots,a) \in FQ_n^{\times n}, \; a \in FQ_n.$

Since $FQ(X)$ is the quotient of $FR(X)$ by \eqref{eqn:Quandle}, one has
\begin{corollary}
The action of $B_n$ on $FR_n^{\times n}$ is faithful.
\end{corollary}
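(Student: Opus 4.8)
The plan is to exploit the quandle quotient $\pi \colon FR_n \twoheadrightarrow FQ_n$ obtained by imposing the relation \eqref{eqn:Quandle}, and to transport the faithfulness already established in proposition \ref{thm:FQActsFaith} back along this surjection. The key observation is that $\pi$ is a morphism of shelves, i.e. $\pi(a \lhd b) = \pi(a) \lhd \pi(b)$, so the induced componentwise map $\pi^{\times n} \colon FR_n^{\times n} \to FQ_n^{\times n}$ commutes with the action of each generator $\sigma_i$ (and of each $\sigma_i^{-1}$): indeed, by proposition \ref{thm:SDaction} these act only through the operations $\lhd$ and $\widetilde{\lhd}$, both of which $\pi$ respects. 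Hence $\pi^{\times n}$ is $B_n$-equivariant, and it is moreover surjective because $\pi$ is.

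First I would reduce faithfulness to triviality of the kernel: it suffices to show that if $\alpha \in B_n$ fixes every element of $FR_n^{\times n}$, then $\alpha = 1$. Granting this hypothesis on $\alpha$, pick any $\overline{c} \in FQ_n^{\times n}$ and, using surjectivity of $\pi^{\times n}$, choose $\overline{a} \in FR_n^{\times n}$ with $\pi^{\times n}(\overline{a}) = \overline{c}$. Equivariance then gives $\alpha(\overline{c}) = \alpha(\pi^{\times n}(\overline{a})) = \pi^{\times n}(\alpha(\overline{a})) = \pi^{\times n}(\overline{a}) = \overline{c}$, so $\alpha$ acts as the identity on all of $FQ_n^{\times n}$. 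By proposition \ref{thm:FQActsFaith} this latter action is faithful, whence $\alpha = 1$, as desired.

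The argument is essentially a soft diagram chase, so I do not anticipate a genuine obstacle. The only two points meriting care are the equivariance of $\pi^{\times n}$, which rests on $\pi$ being a shelf morphism (guaranteed since the quandle relation \eqref{eqn:Quandle} is compatible with $\lhd$ and $\widetilde{\lhd}$), and the direction of the implication: faithfulness passes \emph{from} the quotient $FQ_n$ \emph{up} to $FR_n$ precisely because the comparison map $\pi^{\times n}$ is surjective, not merely equivariant. All the mathematical content of the corollary is already concentrated in proposition \ref{thm:FQActsFaith}; the present statement merely records that faithfulness is inherited by any rack that maps onto $FQ_n$ compatibly with the operation.
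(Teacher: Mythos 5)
Your argument is correct and is exactly the route the paper takes: it invokes the fact that $FQ_n$ is the quotient of $FR_n$ by \eqref{eqn:Quandle}, so the surjective equivariant map $FR_n^{\times n}\twoheadrightarrow FQ_n^{\times n}$ transports faithfulness from proposition \ref{thm:FQActsFaith} back to $FR_n$. You have merely written out the equivariance and kernel-chasing details that the paper leaves implicit.
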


Observe that a monogenerated free rack is not sufficient to produce a faithful action: passing to $CR,$ via the isomorphism \eqref{ref:FR1}, one sees that the action of $B_n$ on $CR^{\times n}$ simply counts the algebraic number of times a strand passes over other strands (with the sign $+$ when moving to the right and $-$ when moving to the left); thus the following two braids are indistinguishable by their action:

\begin{center}
\begin{tikzpicture}[scale=0.5]
 \draw [->]  (-4,1) -- (-4,2);
 \draw [rounded corners] (0,0) -- (1.4,1.4);
 \draw [rounded corners] (2,3) -- (2,2) -- (1.6,1.6);
 \draw (1,0) -- (0.6,0.4);
 \draw [rounded corners] (0.4,0.6) -- (0,1) -- (0,2) -- (1,3);
 \draw [rounded corners] (2,0) -- (2,1) -- (0.6,2.4);
 \draw [rounded corners] (0.4,2.6) --  (0,3);
 \node at (2,0) [below] {$\scriptstyle{k}$};
 \node at (1,0) [below] {$\scriptstyle{m}$};
 \node at (0,0) [below] {$\scriptstyle{n}$};
 \node at (2,3) [above right] {$\scriptstyle{n+1}$};
 \node at (1,3) [above] {$\scriptstyle{m+1}$};
 \node at (0,3) [above left] {$\scriptstyle{k-1}$};
 \node at (5,1) {$\neq$};
 \node at (6,1) {};
\end{tikzpicture}
\begin{tikzpicture}[scale=0.5]
 \draw [rounded corners] (0,0) -- (0,1) -- (0.4,1.4);
 \draw (0.6,1.6) -- (2,3);
 \draw [rounded corners] (1,0) -- (2,1) -- (2,2) -- (1.6,2.4);
 \draw (1.4,2.6) -- (1,3);
 \draw (2,0) -- (1.6,0.4);
 \draw [rounded corners] (1.4,0.6) -- (0,2) -- (0,3);
 \node at (2,0) [below] {$\scriptstyle{k}$};
 \node at (1,0) [below] {$\scriptstyle{m}$};
 \node at (0,0) [below] {$\scriptstyle{n}$};
 \node at (2,3) [above right] {$\scriptstyle{n+1}$};
 \node at (1,3) [above] {$\scriptstyle{m+1}$};
 \node at (0,3) [above left] {$\scriptstyle{k-1}$};
 \node at (3,0) {.};
\end{tikzpicture}
\end{center}

\subsection{Categories: maturity}\label{sec:cat} 

The notion of braids is very ``categorical" - more than that of knots for example. Braids naturally ``correspond" (in the sense specified later) to the notion of a \emph{braided monoidal category}. The latter is defined as a strict monoidal category $\C$ endowed with a \emph{braiding}, i.e. a natural family of isomorphisms
 $$c=\{c_{V,W} : V\otimes W \stackrel{\sim}{\longrightarrow} W \otimes V\}, \quad \forall V,W \in Ob(\C),$$
 satisfying
 \begin{align}
c_{V,W\otimes U} &=(\Id_W \otimes c_{V,U} )\circ(c_{V,W}\otimes \Id_U),\label{eqn:br_cat}\\
c_{V\otimes W, U} &=(c_{V,U}\otimes \Id_W)\circ(\Id_V \otimes c_{W,U})\label{eqn:br_cat2}
 \end{align}
for any triple of objects $V, W, U.$ ``Natural" means here
\begin{equation}\label{eqn:nat}\tag{Nat}
c_{V',W'} \circ(f\otimes g) = (g\otimes f)\circ c_{V,W}
\end{equation}
 for all $ V,W,V',W' \in Ob(\C), f \in \Hom_\C(V,V'), g \in \Hom_\C(W,W').$
 
 A braided category $\C$ is called \emph{symmetric} if its braiding is symmetric:
\begin{equation}\label{eqn:symm_cat}
c_{V,W} \circ c_{W,V} =\Id_{W\otimes V}, \quad \forall V,W \in Ob(\C).
\end{equation}
 
 The part ``monoidal" of the usual terms ``braided monoidal" and ``symmetric monoidal" will be omitted in what follows.
 
 We work only with \underline{strict} monoidal categories here for the sake of simplicity; according to a theorem of Mac Lane (\cite{Cat}), each monoidal category is monoidally equivalent to a strict one.

Now, two important classical results express a deep connection between the notions of braids and braided categories.
 
\begin{theorem}\label{thm:AlgCat}
Denote by $\Cbr$ the free braided category generated by a single object $V.$ Then for each $n$ one has a group isomorphism
\begin{align}
\psi: B_n &\stackrel{\sim}{\longrightarrow} \End_{\Cbr}(V^{\otimes n}),\notag \\
\sigma_i^{\pm 1} &\longmapsto \Id_V^{\otimes (i-1)}\otimes c_{V,V}^{\pm 1} \otimes \Id_V^{\otimes (n-i-1)}.\label{eqn:br_grp_cat} 
\end{align}
\end{theorem}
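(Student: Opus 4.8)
The plan is to verify that $\psi$ is a well-defined group homomorphism, then that it is surjective by freeness, and finally---the crux---that it is injective by exhibiting a retraction onto the concrete braided category of geometric braids and invoking Theorem \ref{thm:AlgTop}.

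First, well-definedness. Writing $c_i := \Id_V^{\otimes(i-1)} \otimes c_{V,V} \otimes \Id_V^{\otimes(n-i-1)} \in \End_{\Cbr}(V^{\otimes n})$, I must check that the defining relations of $B_n$ hold for the $c_i$. The commutation relation \eqref{eqn:BrComm} for $|i-j|>1$ is immediate from the interchange law in a strict monoidal category, since $c_i$ and $c_j$ act on disjoint tensor slots. The Yang--Baxter relation \eqref{eqn:BrYB}, namely $c_i c_{i+1} c_i = c_{i+1} c_i c_{i+1}$, is the only nontrivial identity; it is the standard consequence of the hexagon axioms \eqref{eqn:br_cat}--\eqref{eqn:br_cat2} together with naturality \eqref{eqn:nat} of the braiding applied to the morphism $c_{V,V}$ itself. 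This makes $\psi$ a homomorphism, and the same computation for the $c_i^{-1}$ (using that $c_{V,V}$ is invertible in the braided category) shows that $\psi$ respects inverses.

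Surjectivity follows from the construction of the free braided category on one object: every morphism of $\Cbr$ is a finite composite of tensor products of identities with copies of $c_{V,V}^{\pm 1}$, taken modulo the braiding relations. In particular every endomorphism of $V^{\otimes n}$ is a composite of the $c_i^{\pm 1}$, and these are exactly the images $\psi(\sigma_i^{\pm 1})$, so $\psi$ is onto. Note also that all generating morphisms preserve the number of tensor factors, so $\End_{\Cbr}(V^{\otimes n})$ is the only nonempty hom-set involving $V^{\otimes n}$.

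Injectivity is the main obstacle, and here I would avoid a direct normal-form analysis in $\Cbr$---which would essentially reprove Theorem \ref{thm:AlgTop}---in favor of a retraction. Assemble the geometric braids into a braided category $\B$: objects are the natural numbers, $\Hom_{\B}(m,n)$ is empty unless $m=n$ and equals $\B_n$ otherwise, the tensor product is addition (placing braids side by side), and the braiding $c_{1,1}$ is the elementary positive crossing in $\B_2$. By the universal property of the free braided category there is a unique braided functor $F \colon \Cbr \to \B$ with $F(V)=1$, which necessarily sends $c_{V,V} \mapsto c_{1,1}$. Restricting $F$ to $\End_{\Cbr}(V^{\otimes n}) \to \End_{\B}(n) = \B_n$ and precomposing with $\psi$, one computes on generators $F(\psi(\sigma_i)) = \Id_1^{\otimes(i-1)} \otimes c_{1,1} \otimes \Id_1^{\otimes(n-i-1)}$, which is precisely the $i$-th elementary braid $\varphi(\sigma_i)$ of Theorem \ref{thm:AlgTop}. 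Hence $F \circ \psi = \varphi$ as homomorphisms $B_n \to \B_n$, and since $\varphi$ is an isomorphism, $\psi$ is injective. Combined with surjectivity, this proves $\psi$ is a group isomorphism. The genuine difficulty is thus outsourced to the topological Theorem \ref{thm:AlgTop}; the categorical part reduces to checking the braid relations and organizing the freeness and universal-property arguments.
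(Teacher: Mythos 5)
Your proof is correct, but the comparison here is slightly unusual: the paper does not actually prove Theorem \ref{thm:AlgCat} at all --- it is quoted as a classical result of Joyal and Street, with the reader referred to the literature. The closest thing to an in-paper proof is the argument for the virtual analogue, Theorem \ref{thm:AlgCatVirtual}, and there the strategy differs from yours in the decisive step. Your first two steps (well-definedness via the hexagon axioms plus naturality applied to $g=c_{V,V}$, and surjectivity via the syntactic generation of the free category and formula \eqref{eqn:br_powers}) match the paper's treatment of the virtual case exactly. For injectivity, however, the paper argues purely categorically: it shows that every axiom of a (symmetric) braided category, when restricted to $\End(V^{\otimes n})$, is already a consequence of the imposed group relations, so the free construction introduces no further identifications. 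You instead build the geometric braid category $\B$, obtain a retraction $F\colon\Cbr\to\B$ from the universal property, and observe $F\circ\psi=\varphi$, outsourcing injectivity to Artin's Theorem \ref{thm:AlgTop}. Both routes are valid; yours is shorter but inverts the paper's logical organization, since the paper deduces the identification $\Cbr\cong\mathcal{B}r$ (Corollary \ref{thm:CatTop}) \emph{from} Theorems \ref{thm:AlgTop} and \ref{thm:AlgCat}, whereas you consume Theorem \ref{thm:AlgTop} to prove Theorem \ref{thm:AlgCat} and get Corollary \ref{thm:CatTop} essentially for free. The purely categorical route has the advantage of generalizing to the virtual setting, where no off-the-shelf topological faithfulness theorem is available. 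Two small points you should make explicit if you keep your route: that $\B$ really is a braided strict monoidal category (you only define $c_{1,1}$; the components $c_{m,n}$ must be the block crossings and their naturality is a genuine, if standard, isotopy argument), and that the universal property you invoke presupposes the syntactic construction of $\Cbr$ whose morphisms are words in the $c_i^{\pm1}$ modulo the braided-category axioms.
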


Thus braid groups describe hom-sets of a free monogenerated braided category.

The second result is the following:

\begin{corollary}
For any object $V$ in a braided category $\C,$ the map defined by formula \eqref{eqn:br_grp_cat} endows $V^{\otimes n}$ with an action of the group $B_n.$ 
\end{corollary}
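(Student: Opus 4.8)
The plan is to deduce the statement from Theorem \ref{thm:AlgCat} by transporting the action $\psi$ along a canonical functor out of the free braided category, which is exactly what makes this a \emph{corollary}. First I would invoke the universal property that makes $\Cbr$ \emph{free} on a single object: any object $V$ of a braided category $\C$ determines a unique strict braided (monoidal) functor $F\colon \Cbr \to \C$ sending the generating object to $V$. Being strict monoidal, $F$ preserves tensor products, units and identity morphisms; being braided, it sends the generating braiding $c_{V,V}$ of $\Cbr$ to the braiding $c_{V,V}$ of $\C$.

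Next I would restrict $F$ to endomorphism monoids. For each $n$, functoriality gives a monoid homomorphism $\End_{\Cbr}(V^{\otimes n}) \to \End_{\C}(V^{\otimes n})$, since $F$ preserves composition and identities. By Theorem \ref{thm:AlgCat} the source is isomorphic to the group $B_n$ via $\psi$; composing, I obtain a monoid homomorphism $B_n \to \End_{\C}(V^{\otimes n})$. Because $B_n$ is a group, the images of its elements are automatically invertible, so this composite is genuinely a $B_n$-action on $V^{\otimes n}$ in $\C$. It then remains to identify this composite with the stated formula \eqref{eqn:br_grp_cat}: using that $\psi(\sigma_i^{\pm 1}) = \Id_V^{\otimes(i-1)}\otimes c_{V,V}^{\pm1}\otimes \Id_V^{\otimes(n-i-1)}$ inside $\Cbr$ and that $F$ is strict monoidal and braided, one computes
\[
F(\psi(\sigma_i^{\pm 1})) = \Id_V^{\otimes(i-1)}\otimes c_{V,V}^{\pm1}\otimes \Id_V^{\otimes(n-i-1)},
\]
now interpreted in $\C$, which is precisely \eqref{eqn:br_grp_cat}.

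The only real subtlety — the main obstacle — is the very first step: pinning down the universal property of $\Cbr$ precisely enough to guarantee the existence of the braided functor $F$. If one prefers to sidestep this, the alternative is a direct verification that the morphisms $s_i := \Id_V^{\otimes(i-1)}\otimes c_{V,V}\otimes \Id_V^{\otimes(n-i-1)}$ satisfy the defining relations \eqref{eqn:BrComm} and \eqref{eqn:BrYB} of $B_n$ in $\End_{\C}(V^{\otimes n})$. The commutation relation for $|i-j|>1$ is immediate, since the two braidings then act on disjoint tensor factors and commute in the monoidal category. The Yang--Baxter relation is the genuinely nontrivial point, and it is precisely here that the hexagon axioms \eqref{eqn:br_cat}, \eqref{eqn:br_cat2} together with the naturality \eqref{eqn:nat} of $c$ enter, showing that $c_{V,V}$ solves the Yang--Baxter equation. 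In either approach, invertibility of the $\sigma_i^{-1}$ images is handled by the symmetric pair of formulas for $c_{V,V}^{-1}$, so the monoid homomorphism extends to the whole group $B_n$.
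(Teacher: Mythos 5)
The paper states this corollary without proof, as an immediate consequence of Theorem \ref{thm:AlgCat}, and your universal-property argument (a braided strict monoidal functor $F\colon \Cbr\to\C$ classifying $V$, composed with $\psi$) is exactly the intended deduction; your fallback direct verification via naturality and the hexagon axioms matches the observation the paper itself makes later, in the proof of Theorem \ref{thm:AlgCatVirtual}, that $c_{V,V}$ satisfies \eqref{eqn:YB}. The proposal is correct and takes essentially the same route as the paper.
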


This corollary is a major source of representations of the braid group.

Theorems \ref{thm:AlgTop} and \ref{thm:AlgCat} put together give

\begin{corollary}\label{thm:CatTop}
The category $\Cbr$ is isomorphic, as a braided category, to the category $\mathcal B r$ of braids (objects = $\NN$, $\End_{\mathcal B r}(n)$ = braids on $n$ strands, $c_{1,1} =$\begin{tikzpicture}[scale=0.5]
 \draw (0,0) -- (1,1);
 \draw (0,1) -- (0.3,0.7);
 \draw (1,0) -- (0.7,0.3);
\end{tikzpicture}).
\end{corollary}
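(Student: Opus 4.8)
The plan is to construct a braided functor $\Phi\colon \Cbr \to \mathcal{B}r$ and show it is an isomorphism of categories. Since $\Cbr$ is the \emph{free} braided category on the single object $V,$ its universal property furnishes, for the choice of the object $1$ in $\mathcal{B}r,$ a unique strict monoidal braided functor $\Phi$ with $\Phi(V)=1$ and $\Phi(c_{V,V})=c_{1,1}.$ On objects $\Phi$ sends $V^{\otimes n}\mapsto n$ (with $\II=V^{\otimes 0}\mapsto 0$), which is a bijection onto $\Ob(\mathcal{B}r)=\NN$ because every object of $\Cbr$ is a tensor power of $V.$ It then remains to prove that $\Phi$ is bijective on every hom-set.

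First I would dispose of the hom-sets between objects of different ``lengths''. Applying the universal property once more, this time to the discrete strict braided category $(\NN,+)$ (objects $\NN,$ only identity arrows, trivial symmetric braiding) together with the object $1,$ I obtain a degree functor $\deg\colon \Cbr \to (\NN,+)$ with $\deg(V^{\otimes n})=n.$ Since the only arrows in $(\NN,+)$ are identities, any morphism $V^{\otimes n}\to V^{\otimes m}$ in $\Cbr$ forces $n=m,$ so $\Hom_{\Cbr}(V^{\otimes n},V^{\otimes m})=\varnothing$ for $n\neq m.$ The same holds in $\mathcal{B}r$ by definition (a braid on $n$ strands has matching source and target), hence on these empty hom-sets $\Phi$ is trivially bijective.

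The core of the argument is then the restriction $\Phi_n\colon \End_{\Cbr}(V^{\otimes n})\to \End_{\mathcal{B}r}(n)=\B_n,$ a group homomorphism, which I would compare with the two isomorphisms already at hand: $\psi\colon B_n \xrightarrow{\sim} \End_{\Cbr}(V^{\otimes n})$ from Theorem \ref{thm:AlgCat} and Artin's $\varphi\colon B_n \xrightarrow{\sim} \B_n$ from Theorem \ref{thm:AlgTop}. Because $\Phi$ is monoidal and braided, it sends $\psi(\sigma_i)=\Id_V^{\otimes(i-1)}\otimes c_{V,V}\otimes \Id_V^{\otimes(n-i-1)}$ to $\Id_1^{\otimes(i-1)}\otimes c_{1,1}\otimes \Id_1^{\otimes(n-i-1)},$ which is exactly the elementary braid $\varphi(\sigma_i).$ Thus the homomorphisms $\Phi_n\circ\psi$ and $\varphi$ agree on the generators $\sigma_i$ of $B_n,$ so they coincide, giving $\Phi_n=\varphi\circ\psi^{-1};$ as a composite of two isomorphisms, $\Phi_n$ is itself an isomorphism.

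Combining the two cases, $\Phi$ is bijective on objects and on every hom-set, i.e. an isomorphism of categories, and it is braided by construction; this yields the claimed braided isomorphism $\Cbr\cong\mathcal{B}r.$ I expect the only genuinely delicate point to be the ``empty hom-set'' step, namely rigorously justifying that $\Cbr$ admits no length-changing morphisms; the degree functor packages this cleanly, but it relies on taking the universal property of the free braided category seriously. Everything else is a formal consequence of Theorems \ref{thm:AlgTop} and \ref{thm:AlgCat}.
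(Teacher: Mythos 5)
Your proof is correct and follows essentially the same route as the paper, which simply states the corollary as ``Theorems \ref{thm:AlgTop} and \ref{thm:AlgCat} put together'': the substance in both cases is that the functor restricts on endomorphism groups to $\varphi\circ\psi^{-1}$. Your additional scaffolding (the universal property to build $\Phi$, and the degree functor to rule out length-changing morphisms) is a sound and welcome elaboration of the details the paper leaves implicit.
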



\medskip

Let us now list several basic examples of symmetric categories which will be useful for the rest of the paper. Notation $\kk$ will always stand for a commutative ring.

\begin{itemize}
\item The category of sets $\Set$ is monoidal, with the Cartesian product $\times$ as its tensor product, and a one-element set $\II$ as its identity object. The identification of $(A\times B)\times C$ with $A\times (B\times C)$ and of $\II\times A$ with $A\times \II$ and with $A$ for any sets $A,B,C,$ which will be implicit in what follows, gives a strict monoidal category. This category is symmetric, with the braiding given by the usual \emph{flip} isomorphism
$$\tau : (a, b)\longmapsto (b, a).$$
\item The category $\kVect$ of $\kk$-modules and $\kk$-linear maps can be seen as symmetric in two ways. Firstly, it can be endowed with the usual tensor product, the one-dimensional free module $\kk$ as its identity object and the flip 
$$\tau : v\otimes w\longmapsto w\otimes v$$
as its braiding. Secondly, one can take the direct sum $\oplus$ as a tensor product, the zero module as its identity object and the flip 
$$\tau : v\oplus w\longmapsto w\oplus v$$
as its braiding. Notation $\kVectSum$ will be used for the second structure. Identifications similar to those for sets are implicit in both cases to assure the strictness. 

The linearization map gives a functor of symmetric monoidal categories 
\begin{align}\label{eqn:Lin}
Lin:\Set &\longrightarrow \kVect,\\
S &\longmapsto \kk S.\notag
\end{align}
One more functor of symmetric monoidal categories, the forgetful one, will be used:
\begin{align}\label{eqn:For}
For :\kVectSum &\longrightarrow \Set,\\
V &\longmapsto V.\notag
\end{align}

Note that both functors are faithful but not full in general.

\item The category of $\ZZ$-graded $\kk$-modules $\kVectGrad$ is symmetric, with the usual graded tensor product, the zero-graded module $\kk$ as its identity object and the \textit{Koszul flip}
$$\tau_{Koszul} : v\otimes w\longmapsto (-1)^{\deg v \deg w} w\otimes v$$
for homogeneous $v$ and $w$ as its braiding. Necessary identifications are effectuated to assure the strictness. Note that this last braiding explains the {Koszul sign convention} in many settings.
\item
One can change the sign $(-1)^{\deg v \deg w}$ in the definition of Koszul flip by any other antisymmetric bicharacter.  
Take in particular a finite abelian group $\Gamma$ endowed with an antisymmetric bicharacter $\chi.$ The category $_\Gamma\!\kVect$ of $\kk$-modules graded over $\Gamma$ is symmetric, with the usual $\Gamma$-graded tensor product, the zero-graded $\kk$ as its identity object and, as a braiding, the so-called \textit{color flip}
$$\tau_{color} : v\otimes w\longmapsto \chi(f,g) w\otimes v$$
for homogeneous $v$ and $w$ graded over $f$ and $g \in \Gamma$ respectively.
\end{itemize}

In what follows the categories from this list will be implicitly endowed with the symmetric structures described here.

\section{Moving to virtual reality}

\subsection{A patchwork of existing concepts and results}\label{sec:patchwork}

The concept of virtuality was born in the \underline{topological} framework in Kauffman's pioneer 1999 paper \cite{KauffmanVirtual} (announced in 1996). See also \cite{Nelson} for an express introduction. The original idea is very natural. One tries to code a knot by writing down the sequence of its crossings encountered when moving along a diagram of the knot, with additional under/over and orientation information for each crossing. This code, called \emph{Gauss code}, is unambiguous but not surjective: some sequences do not correspond to any knot, since while decoding them one may be forced to intersect the part of the diagram drawn before. Kauffman's idea was to introduce in this situation a new, \emph{virtual} type of crossings in a diagram. They are depicted like this: \begin{tikzpicture}[scale=0.5]
 \draw (0,0) -- (1,1);
 \draw (0,1) -- (1,0);
 \draw (0.5,0.5) circle (0.2);
\end{tikzpicture}. Such crossings ``are not here", they come from the necessity to draw in the plane a diagram given abstractly by its Gauss code. The same happens when one has to draw an abstract non-planar graph in $\RR^2.$ Note that the ``under / over" distinction is no longer relevant for virtual crossings.

Another situation where \emph{virtual knots}, i.e. knots with both usual and virtual crossings, naturally emerge is  when one wants to depict in $\RR^2$ knot diagrams living on surfaces other than the plane (for instance, on a torus). See also an interpretation in terms of abstract knot diagrams (\cite{Kamada2}).

A virtual theory parallel to that of classical knots has been developed in numerous papers. We extract from it only the part concerning \emph{virtual braids}, essentially due to Vershinin (see his 1998 paper \cite{Ver}). To emphasize the connexion to virtual knots, one notes the Alexander-Markov type result of Kamada (\cite{KamadaVirtual}) describing the closure operation for virtual braids. 
 
Unfortunately there seem to be no purely topological elementary definition of virtual braids. The common definition is combinatorial: one considers braid diagrams with usual and virtual crossings up to certain relations, which are versions of Reidemeister moves and which are dictated by the Gauss coding. Here is an example of a  relation involving both usual and virtual crossings -- the mixed Yang-Baxter relation:
\begin{center}
\bigskip
\begin{tikzpicture}[scale=0.5]
\draw [rounded corners](0,0)--(0,0.25)-- (1,0.75)--(1,1.25)--(2,1.75)--(2,3);
\draw [rounded corners](1,0)--(1,0.25)--(0,0.75)--(0,2.25)--(1,2.75)--(1,3);
\draw [rounded corners](2,0)--(2,1.25)--(1,1.75)--(1,2.25)--(0.6,2.4);
\draw [rounded corners](0.4,2.6)--(0,2.75)--(0,3); 
\draw (0.5,0.5) circle (0.2);
\draw (1.5,1.5) circle (0.2);
\node  at (5,1.5){$=$};
\end{tikzpicture}
\begin{tikzpicture}[scale=0.5]
\node  at (-2,1.5){};
\draw [rounded corners](1,1)--(1,1.25)--(2,1.75)--(2,3.25)--(1,3.75)--(1,4);
\draw [rounded corners](0,1)--(0,2.25)--(1,2.75)--(1,3.25)--(2,3.75)--(2,4);
\draw [rounded corners](2,1)--(2,1.25)--(1.6,1.4);
\draw [rounded corners](1.4,1.6)--(1,1.75)--(1,2.25)--(0,2.75)--(0,4);
\draw (1.5,3.5) circle (0.2);
\draw (0.5,2.5) circle (0.2);
\node  at (3,1){.};
\end{tikzpicture}
\end{center}

It is thus natural to start from the \underline{algebraic} viewpoint.

\begin{definition}\label{def:VB}
The \emph{virtual braid group} $VB_n$ is defined by a set of generators $\{\sigma_i,\zeta_i, \;  1 \le i \le n-1\},$ and the following relations:
\begin{enumerate}
\item \eqref{eqn:BrComm} and \eqref{eqn:BrYB} for the $\sigma_i$'s;
\item \eqref{eqn:BrComm}, \eqref{eqn:BrYB} and \eqref{eqn:Symm} for the $\zeta_i$'s;
\item  \emph{mixed relations}
\begin{align}
\sigma_i \zeta_j & = \zeta_j \sigma_i \qquad\qquad \text{  if } |i-j|>1, 1 \le i,j \le n-1,\label{eqn:BrCommMixed}\tag{$Br_{C}^m$} \\
\zeta_i \zeta_{i+1} \sigma_i & = \sigma_{i+1} \zeta_i \zeta_{i+1} \qquad \forall \: 1 \le i \le n-2. \label{eqn:BrYBMixed}\tag{$Br_{YB}^m$}
\end{align}
\end{enumerate}
\end{definition}

In other words, the group $VB_n$ is the direct product $B_n\ast S_n$ factorized by relations \eqref{eqn:BrCommMixed} and \eqref{eqn:BrYBMixed}. This explains the name ``braid-permutation group" used in \cite{FRR} for a slightly different, but closely related structure.

\medskip

Now \emph{virtual braids on $n$ strands} can be (rather informally) defined as the monoid of braid diagrams with usual and virtual crossings up to ambient isotopy, factorized by the kernel of the monoid surjection
\begin{center}
\begin{tikzpicture}[scale=0.5]
\draw (1,0)--(1,2);
\node at (2,0) {$\cdots$};
\draw (3,0)--(3,2);
\draw [rounded corners] (5,0) -- (5,0.5) -- (4.6,0.9);
\draw [rounded corners] (4.4,1.1) -- (4,1.5) --(4,2);
\draw [rounded corners] (4,0) -- (4,0.5) -- (5,1.5) -- (5,2);
\draw (6,0)--(6,2);
\node at (7,0) {$\cdots$};
\draw (8,0)--(8,2);
\node [below] at (1,0) {$\scriptstyle 1$};
\node [below] at (3,0) {$\scriptstyle {i-1}$};
\node [below,purple] at (4,0) {$\scriptstyle i$};
\node [below,purple] at (5,0) {$\scriptstyle{i+1}$};
\node [below] at (6,0) {$\scriptstyle {i+2}$};
\node [below] at (8,0) {$\scriptstyle n$};
\node at (9,1) {$\mapsto \sigma_i,$};
\node at (12,1) {};
\end{tikzpicture} \begin{tikzpicture}[scale=0.5]
\draw (1,0)--(1,2);
\node at (2,0) {$\cdots$};
\draw (3,0)--(3,2);
\draw [rounded corners] (5,0) -- (5,0.5) -- (4,1.5) --(4,2);
\draw [rounded corners] (4,0) -- (4,0.5) -- (4.4,0.9);
\draw [rounded corners] (5,2) -- (5,1.5) -- (4.6,1.1);
\draw (6,0)--(6,2);
\node at (7,0) {$\cdots$};
\draw (8,0)--(8,2);
\node [below] at (1,0) {$\scriptstyle 1$};
\node [below] at (3,0) {$\scriptstyle {i-1}$};
\node [below,purple] at (4,0) {$\scriptstyle i$};
\node [below,purple] at (5,0) {$\scriptstyle{i+1}$};
\node [below] at (6,0) {$\scriptstyle {i+2}$};
\node [below] at (8,0) {$\scriptstyle n$};
\node at (10,1) {$\mapsto \sigma_i^{-1},$};
\end{tikzpicture}

\begin{tikzpicture}[scale=0.5]
\draw (1,0)--(1,2);
\node at (2,0) {$\cdots$};
\draw (3,0)--(3,2);
\draw [rounded corners] (5,0) -- (5,0.5) -- (4,1.5) --(4,2);
\draw [rounded corners] (4,0) -- (4,0.5) -- (5,1.5) -- (5,2);
\draw (4.5,1) circle (0.3);
\draw (6,0)--(6,2);
\node at (7,0) {$\cdots$};
\draw (8,0)--(8,2);
\node [below] at (1,0) {$\scriptstyle 1$};
\node [below] at (3,0) {$\scriptstyle {i-1}$};
\node [below,purple] at (4,0) {$\scriptstyle i$};
\node [below,purple] at (5,0) {$\scriptstyle{i+1}$};
\node [below] at (6,0) {$\scriptstyle {i+2}$};
\node [below] at (8,0) {$\scriptstyle n$};
\node at (9,1) {$\mapsto \zeta_i.$};
\end{tikzpicture}
\end{center}
The (evident) definition of the monoid of braid diagrams with usual and virtual crossings is omitted here for the sake of concision.

Observe that virtual braids inherit a group structure from $VB_n.$ Note also that theorem \ref{thm:AlgTop} becomes a definition in the virtual world. In what follows virtual braids will be identified with corresponding elements of $VB_n.$

\begin{remark}\label{rmk:OtherYB}
In $VB_n$ one automatically has two other versions of Yang-Baxter relation with one $\sigma$ and two $\zeta$'s. On the contrary, Yang-Baxter relations with one $\zeta$ and two $\sigma$'s do not hold. It comes from the fact Gauss decoding process unambiguously prescribes the pattern of usual crossings and leaves a certain liberty only in placing virtual crossings (recall that the definition of virtual knots was motivated by Gauss coding). Such YB relations are called \emph{forbidden}. Here is an example:
\begin{center}
\bigskip
\begin{tikzpicture}[scale=0.5]
\draw [rounded corners](0,0)--(0,0.25)-- (1,0.75)--(1,1.25)--(2,1.75)--(2,3);
\draw [rounded corners](1,0)--(1,0.25)--(0,0.75)--(0,2.25)--(1,2.75)--(1,3);
\draw [rounded corners](2,0)--(2,1.25)--(1.6,1.4);
\draw [rounded corners](1.4,1.6)--(1,1.75)--(1,2.25)--(0.6,2.4);
\draw [rounded corners](0.4,2.6)--(0,2.75)--(0,3); 
\draw (0.5,0.5) circle (0.2);
\node  at (5,1.5){$\neq$};
\end{tikzpicture}
\begin{tikzpicture}[scale=0.5]
\node  at (-2,1.5){};
\draw [rounded corners](1,1)--(1,1.25)--(2,1.75)--(2,3.25)--(1,3.75)--(1,4);
\draw [rounded corners](0,1)--(0,2.25)--(1,2.75)--(1,3.25)--(2,3.75)--(2,4);
\draw [rounded corners](2,1)--(2,1.25)--(1.6,1.4);
\draw [rounded corners](1.4,1.6)--(1,1.75)--(1,2.25)--(0.6,2.4);
\draw [rounded corners] (0.4,2.6)--(0,2.75)--(0,4);
\draw (1.5,3.5) circle (0.2);
\node  at (3,1){.};
\end{tikzpicture}
\end{center}
\end{remark}

\medskip
As for \underline{representations}, shelves and racks remain relevant in the virtual world:
\begin{proposition}\label{thm:VBActsOnRacks}
 Given a rack $(S,\lhd),$ the virtual braid group $VB_n$ acts on $S^{\times n}$ as follows:
\begin{align}
\zeta_i(a_1,\ldots,a_n)&= (a_1,\ldots,a_{i-1},a_{i+1},a_{i},a_{i+2},\ldots,a_n),\label{eqn:VBactionOnRacks}\\
\sigma_i(a_1,\ldots,a_n)&= (a_1,\ldots,a_{i-1},a_{i+1},a_{i} \lhd a_{i+1},a_{i+2},\ldots,a_n).\label{eqn:VBactionOnRacks2}
\end{align}
\end{proposition}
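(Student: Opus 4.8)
The plan is to show that the assignment sending $\sigma_i$ and $\zeta_i$ to the two displayed maps on $S^{\times n}$ respects every defining relation of $VB_n$ listed in Definition \ref{def:VB}; this is exactly what is needed for the assignment to extend to a group homomorphism, i.e. to an action of $VB_n$ on the set $S^{\times n}$. As it turns out, most of the relations are either already established or trivial, and only one carries genuine content.

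First, the relations \eqref{eqn:BrComm} and \eqref{eqn:BrYB} among the $\sigma_i$'s, together with the invertibility of each $\sigma_i$ (with $\sigma_i^{-1}$ given by the $\wlhd$-formula), are precisely the content of Proposition \ref{thm:SDaction} applied to the rack $(S,\lhd)$. Second, each $\zeta_i$ merely transposes the coordinates $i$ and $i+1$, so the $\zeta_i$'s realize the standard action of $S_n$ on $S^{\times n}$ by permutation of the factors; consequently \eqref{eqn:BrComm}, \eqref{eqn:BrYB} and \eqref{eqn:Symm} for the $\zeta_i$'s hold automatically. This disposes of items (1) and (2) of Definition \ref{def:VB}.

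Among the mixed relations, the commutation \eqref{eqn:BrCommMixed} with $|i-j|>1$ is immediate, since $\sigma_i$ alters only the coordinates $i,i+1$ while $\zeta_j$ alters only $j,j+1$, and these pairs are disjoint. The single relation that requires an actual computation is the mixed Yang-Baxter relation \eqref{eqn:BrYBMixed}. Because every operator occurring in it affects only the coordinates $i,i+1,i+2$, it suffices to treat the case $n=3$, $i=1$ and to evaluate both composites on a triple $(a,b,c)$ (reading composites from right to left). One finds
$$\zeta_1\zeta_2\sigma_1(a,b,c) = \zeta_1\zeta_2(b,\, a\lhd b,\, c) = (c,\, b,\, a\lhd b),$$
$$\sigma_2\zeta_1\zeta_2(a,b,c) = \sigma_2(c,\, a,\, b) = (c,\, b,\, a\lhd b),$$
so the two sides coincide.

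I expect essentially no obstacle here: the only points to keep straight are the composition convention for the action and the fact that self-distributivity \eqref{eqn:SelfDistr} is invoked only inside Proposition \ref{thm:SDaction} for the pure braid relation among the $\sigma_i$'s, and is \emph{not} needed for the mixed Yang-Baxter relation, which holds for an arbitrary binary operation $\lhd$ on $S$. The rack axiom \eqref{eqn:Rack} enters only to guarantee that each $\sigma_i$ is invertible, as is required for an action of the \emph{group} $VB_n$ rather than of a monoid.
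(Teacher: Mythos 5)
Your proof is correct. The verification of the mixed Yang--Baxter relation is the only computation with content, and your evaluation of both sides on $(a,b,c)$ is right under the paper's bottom-to-top composition convention (and the other relations are indeed disposed of exactly as you say: Proposition \ref{thm:SDaction} for the $\sigma_i$'s, the standard $S_n$ permutation action for the $\zeta_i$'s, disjoint supports for \eqref{eqn:BrCommMixed}). Your route is, however, different from the paper's. The paper states this proposition without proof in its survey section and only later justifies it categorically: by Lemma \ref{thm:shelf} a rack is a braided object $(S,\sigma_{S,\lhd})$ in the symmetric category $\Set$, and Corollary \ref{crl:AlgCatVirtual} (derived from Theorem \ref{thm:AlgCatVirtual}) then produces a $VB_n$ action on $S^{\times n}$, which the paper observes is exactly the action given by formulas \eqref{eqn:VBactionOnRacks} and \eqref{eqn:VBactionOnRacks2}. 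Your direct check is more elementary and makes the bookkeeping transparent --- in particular your observation that self-distributivity plays no role in the mixed relations, and that the rack axiom enters only through invertibility, is exactly the point the paper makes abstractly when it notes that the mixed Yang--Baxter relations are ``built-in'' via the naturality of the symmetric braiding. What the categorical route buys in exchange is uniformity: the same argument immediately covers all the other braided objects of section \ref{sec:examples} (associative and Leibniz algebras, GSD structures) without redoing any coordinate computation.
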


Note that one can not hope such actions to be faithful, since virtual braids $\sigma_i \sigma_{i+1} \zeta_i$ and $\zeta_{i+1} \sigma_i \sigma_{i+1},$ implied in the forbidden YB relation depicted above, act on $S^{\times n}$ in the same way. 

Manturov proposed in 2002 (cf. \cite{Manturov}) a more adequate structure, called a virtual quandle. We recall it here, as well as its non-idempotent and  non-invertible analogues.

\begin{definition}\label{def:VSD}
A \emph{virtual shelf} is a shelf $(S,\lhd)$ endowed with a shelf automorphism $f:S\longrightarrow S,$ i.e. 
\begin{enumerate}
\item  $f$ admits an inverse $f^{-1},$
\item $f(a\lhd b)=f(a)\lhd f(b).$
\end{enumerate}
If moreover $(S,\lhd)$ is a rack or a quandle, then the triple $(S,\lhd,f)$ is called a \emph{virtual rack / quandle}.
\end{definition}

Note that for a virtual rack, one automatically has
$$f(a \widetilde{\lhd} b)=f(a) \widetilde{\lhd} f(b).$$

The actions from proposition \ref{thm:VBActsOnRacks} can now be upgraded as follows: 
\begin{proposition}\label{thm:VBActsOnVRacks}
 Given a virtual rack $(S,\lhd,f),$ the virtual braid group $VB_n$ acts on $S^{\times n}$ by
\begin{equation}\label{eqn:VBactionOnVRacks}
\zeta_i(a_1,\ldots,a_n)= (a_1,\ldots,a_{i-1},f^{-1}(a_{i+1}),f(a_{i}),a_{i+2},\ldots,a_n)
\end{equation}
and \eqref{eqn:VBactionOnRacks2}.
\end{proposition}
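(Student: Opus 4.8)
The plan is to treat \eqref{eqn:VBactionOnVRacks} and \eqref{eqn:VBactionOnRacks2} as the definition of operators $\zeta_i,\sigma_i$ on $S^{\times n}$ and to verify that they satisfy every defining relation of $VB_n$ listed in definition \ref{def:VB}; since $VB_n$ is presented by generators and relations, this is exactly what is needed to obtain a well-defined group action. Because $(S,\lhd)$ is a rack, the operators $\sigma_i$ are invertible (by \eqref{eqn:Rack}) and the $\zeta_i$ will turn out to be involutions, so we indeed land in $\operatorname{Sym}(S^{\times n})$ and obtain a group, not merely a monoid, action.

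Several blocks of relations require no work. The braid relations \eqref{eqn:BrComm} and \eqref{eqn:BrYB} for the $\sigma_i$'s are precisely proposition \ref{thm:SDaction} applied to $(S,\lhd)$, so they hold verbatim. Every relation indexed by $|i-j|>1$ -- namely \eqref{eqn:BrComm} for the $\zeta_i$'s and the mixed commutation \eqref{eqn:BrCommMixed} -- involves operators acting on disjoint tensor slots, and hence commutes trivially. For the remaining $S_n$-relations on the $\zeta_i$'s: relation \eqref{eqn:Symm}, $\zeta_i^2=\Id$, is immediate since $f$ and $f^{-1}$ are mutually inverse, so a double swap restores $(a_i,a_{i+1})$; and \eqref{eqn:BrYB} for the $\zeta$'s is checked by tracking a triple $(a,b,c)$ in slots $i,i+1,i+2$, where both $\zeta_i\zeta_{i+1}\zeta_i$ and $\zeta_{i+1}\zeta_i\zeta_{i+1}$ send $(a,b,c)$ to $(f^{-2}(c),\,b,\,f^{2}(a))$. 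Notably this computation uses only that $f$ is a bijection, not the self-distributive operation at all.

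The one genuinely informative step, and the place I expect the whole statement to be decided, is the mixed Yang--Baxter relation \eqref{eqn:BrYBMixed}, $\zeta_i\zeta_{i+1}\sigma_i=\sigma_{i+1}\zeta_i\zeta_{i+1}$. Applying both words to a triple $(a,b,c)$ in slots $i,i+1,i+2$, the left-hand side produces $(f^{-2}(c),\,f(b),\,f(a\lhd b))$ while the right-hand side produces $(f^{-2}(c),\,f(b),\,f(a)\lhd f(b))$. The first two entries coincide unconditionally; the third entries coincide exactly when $f(a\lhd b)=f(a)\lhd f(b)$, i.e.\ precisely when $f$ is a shelf homomorphism. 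Thus condition (2) of definition \ref{def:VSD} is not an optional decoration but is \emph{forced} by the mixed relation, which is the conceptual reason why Manturov's twist by a shelf automorphism is the ``right'' datum.

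Once all relations are verified, the universal property of the presentation of $VB_n$ yields the desired homomorphism $VB_n\to\operatorname{Sym}(S^{\times n})$. As a sanity check one recovers proposition \ref{thm:VBActsOnRacks} by setting $f=\Id$, in which case the homomorphism condition becomes automatic and the $\zeta_i$ reduce to the plain transpositions.
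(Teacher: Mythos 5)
Your direct verification is correct: the decomposition into trivial blocks (disjoint slots, the pure $\sigma$-relations inherited from proposition \ref{thm:SDaction}, the involutivity of $\zeta_i$), the computation $\zeta_i\zeta_{i+1}\zeta_i(a,b,c)=\zeta_{i+1}\zeta_i\zeta_{i+1}(a,b,c)=(f^{-2}(c),b,f^{2}(a))$, and the identification of the mixed relation \eqref{eqn:BrYBMixed} as the one place where $f(a\lhd b)=f(a)\lhd f(b)$ is forced all check out, and invertibility of the generators is properly addressed, so the presentation of $VB_n$ does yield the claimed action. However, this is a genuinely different route from the paper's. The paper states the proposition as a recollection of Manturov's result and ultimately derives it from its categorical machinery: theorem \ref{thm:C_V,f} deforms the symmetric braiding of $\Set$ on the subcategory $\Set_{S,f}$ to $\tau^f=(f^{-1}\times f)\circ\tau$, which is exactly your $\zeta_i$, and then the proposition at the end of section \ref{sec:CatForVB} obtains the action as an instance of corollary \ref{crl:AlgCatVirtual} applied to the braided object $(S,\sigma_{S,\lhd})$ in $(\Set_{S,f},\tau^f)$; the requirement that $\sigma_{S,\lhd}$ be a morphism of $\Set_{S,f}$ is precisely the requirement that $f$ be a rack morphism, which is the categorical counterpart of your mixed-Yang--Baxter computation. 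Your approach is more elementary and has the virtue of exhibiting concretely that the automorphism condition in definition \ref{def:VSD} is necessary as well as sufficient (only the mixed relation uses $\lhd$ at all); the paper's approach buys uniformity --- virtualization becomes a change of ambient symmetric category rather than a new computation --- and generalizes verbatim to any braided object equipped with a compatible automorphism, as exploited in section \ref{sec:twisted}.
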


We finish this section with two examples proposed by Manturov (cf. \cite{Manturov}).
\begin{example}\label{ex:ConjVR}
In an arbitrary rack $R,$ right adjoint action by an element $a,$ given by $b\mapsto b \lhd a$ for all $b \in R,$ is a rack automorphism. Its inverse is $b\mapsto b \wlhd a.$ Thus $\Conj(F_n)$ can be given a virtual quandle structure by
$$f(a):=a \lhd x_{n} \qquad \forall a \in F_n.$$
This virtual quandle will appear in the context of free SD structures.
\end{example}
\begin{example}\label{ex:AlexVirt}
The Alexander quandle \eqref{eqn:Alex} can be endowed with two virtual quandle structures.
\begin{enumerate} \item The first one is obtained by fixing an element $\varepsilon \in \ZZ[t^{\pm 1}]$ and putting
\begin{equation}\label{eqn:Alex2}
f(a):=a +\varepsilon \qquad \forall a \in \ZZ[t^{\pm 1}].
\end{equation}
This virtual quandle is used by Manturov to define the virtual Alexander polynomial, carrying extremely rich topological information about a link. Recall that in the classical setting the Alexander quandle leads to the usual Alexander polynomial, for instance through the Burau representation (cf. example \ref{ex:AlexBurau}).

Note that morphism \eqref{eqn:Alex2} is not linear.
\item
The second virtual structure is more interesting in a slightly generalized context: one replaces $ \ZZ[t^{\pm 1}]$ with an arbitrary $\ZZ[t^{\pm 1}]$-module $A,$ keeping the formula \eqref{eqn:Alex} as a definition of quandle structure. If $A$ is moreover a  $\ZZ[t^{\pm 1},s^{\pm 1}]$-module, then 
\begin{equation}\label{eqn:Alex3}
f(a):=sa \qquad \forall a \in A
\end{equation}
defines a virtual quandle structure, linear this time.
\end{enumerate} 
\end{example}

\medskip
The aim of the rest of this paper is to add some patches to the theory of virtual braids and SD structures, virtualizing (a part of) the content of section \ref{sec:braids}.

\subsection{Patch 1, a small one: positive virtual braid monoids}\label{sec:positive}

Weakening the notion of virtual braid groups by passing to the positive part also makes sense in the virtual world:

\begin{definition}
The \emph{positive virtual braid monoid} $VB_n^+$ is defined by a set of monoid generators $\{\sigma_i,\zeta_i, \;  1 \le i \le n-1\},$ and relations identical to those from definition \ref{def:VB}.
\end{definition}

One gets a submonoid of $VB_n.$

As in the real world, the strucure of shelf bears an action of this monoid:

\begin{proposition}\label{thm:VSaction}
\begin{itemize}
\item Given a shelf $(S,\lhd),$ formulas \eqref{eqn:VBactionOnRacks} and \eqref{eqn:VBactionOnRacks2} define an action of the positive virtual braid monoid $VB_n^+$ on $S^{\times n}.$ 
\item Given a virtual shelf $(S,\lhd,f),$ formulas \eqref{eqn:VBactionOnVRacks} and \eqref{eqn:VBactionOnRacks2} define an action of the positive virtual braid monoid $VB_n^+$ on $S^{\times n}.$ 
\end{itemize}
\end{proposition}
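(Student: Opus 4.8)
The plan is to invoke the universal property of a monoid given by generators and relations: a set map sending the generators $\sigma_i,\zeta_i$ to prescribed transformations of $S^{\times n}$ extends to a $VB_n^+$-action precisely when those transformations satisfy the seven relations collected in definition \ref{def:VB}. The whole proof thus reduces to a finite, relation-by-relation verification. I will run the shelf case and the virtual-shelf case in parallel, exploiting that the operator attached to $\sigma_i$ (formula \eqref{eqn:VBactionOnRacks2}) is identical in both, while only $\zeta_i$ picks up the $f^{\pm1}$-twist in the virtual case.

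First I would dispose of the relations that cost nothing. Relations \eqref{eqn:BrComm} and \eqref{eqn:BrYB} for the $\sigma_i$ alone are \emph{exactly} the content of proposition \ref{thm:SDaction}: the formula \eqref{eqn:VBactionOnRacks2} defines a $B_n^+$-action on $S^{\times n}$ iff $(S,\lhd)$ is a shelf, and since $\sigma_i$ never involves $f$ these hold verbatim in both settings. Next, any $\zeta_i$ touches only the coordinates $i,i+1$, so for $|i-j|>1$ the operators $\zeta_i,\zeta_j$ (resp. $\sigma_i,\zeta_j$) have disjoint support, making \eqref{eqn:BrComm} for the $\zeta$'s and the mixed \eqref{eqn:BrCommMixed} immediate. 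The remaining pure-$\zeta$ relations \eqref{eqn:Symm} and \eqref{eqn:BrYB} are a direct three-coordinate computation: in the plain case $\zeta_i$ is the bare transposition, so these are just the defining relations of the coordinate $S_n$-action; in the virtual case one checks, using only that $f$ is a bijection with inverse $f^{-1}$ (not yet the homomorphism property), that $\zeta_i^2=\Id$ and that both $\zeta_i\zeta_{i+1}\zeta_i$ and $\zeta_{i+1}\zeta_i\zeta_{i+1}$ send $(a,b,c)$ to $(f^{-2}(c),b,f^2(a))$.

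The single relation carrying real content is the mixed Yang--Baxter relation \eqref{eqn:BrYBMixed}, $\zeta_i\zeta_{i+1}\sigma_i=\sigma_{i+1}\zeta_i\zeta_{i+1}$, living on the three coordinates $i,i+1,i+2$. Tracking $(a,b,c)$ through each side (reading right-to-left, for a left action), the left-hand side yields $(f^{-2}(c),f(b),f(a\lhd b))$ and the right-hand side yields $(f^{-2}(c),f(b),f(a)\lhd f(b))$. In the plain shelf case $f$ is absent and both collapse to $(c,b,a\lhd b)$ with no further hypothesis, establishing the first bullet. In the virtual case the two outputs coincide exactly when $f(a\lhd b)=f(a)\lhd f(b)$ — that is, precisely when $f$ is a shelf endomorphism, which is the remaining axiom of a virtual shelf (its invertibility having already been spent on the $\zeta$-relations). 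This relation is therefore the crux, and it is where the virtual-shelf hypotheses are used optimally; all else is disjoint-support bookkeeping or an appeal to proposition \ref{thm:SDaction}. I anticipate no genuine obstacle here, only the need to fix the composition convention and to keep the untouched coordinates out of the way during the two three-coordinate computations.
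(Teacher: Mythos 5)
Your verification is correct: the only relation with real content is indeed the mixed Yang--Baxter relation, your two three-coordinate computations check out (both sides of the pure-$\zeta$ Yang--Baxter give $(f^{-2}(c),b,f^{2}(a))$, and the mixed relation reduces exactly to $f(a\lhd b)=f(a)\lhd f(b)$), and your accounting of where each hypothesis is spent --- bijectivity of $f$ on \eqref{eqn:Symm}, the homomorphism property only on \eqref{eqn:BrYBMixed}, the shelf axiom only on \eqref{eqn:BrYB} for the $\sigma_i$'s via proposition \ref{thm:SDaction} --- is accurate. However, your route is genuinely different from the paper's. The paper never verifies the relations by hand: it states the proposition and justifies it later through its categorical machinery. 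For the first bullet, lemma \ref{thm:shelf} identifies the shelf axiom with the Yang--Baxter equation for $\sigma_{S,\lhd}$, making $(S,\sigma_{S,\lhd})$ a weakly braided object of $\Set$, and theorem \ref{thm:AlgCatVirtualWeak} (the positive analogue of theorem \ref{thm:AlgCatVirtual}) then hands over the $VB_n^+$-action for free; for the second bullet, theorem \ref{thm:C_V,f} deforms the symmetric braiding of $\Set_{S,f}$ to $\tau^f$, and the resulting action is shown in section \ref{sec:CatForVB} to be exactly \eqref{eqn:VBactionOnVRacks}. What the paper's approach buys is that the commutation and pure-$\zeta$ relations are ``built in'' at the categorical level (naturality of the symmetric braiding), and the same argument instantly covers all the later examples (associative and Leibniz algebras, GSD structures); what your approach buys is that it is elementary, self-contained, and makes visible precisely which axiom of a virtual shelf is responsible for which relation of $VB_n^+$ --- information the categorical proof hides. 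Both are valid; yours is a legitimate alternative proof rather than a reconstruction of the paper's.
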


\subsection{Patch 2, an open one: free virtual self-distributive structures}\label{sec:free}

Here we make some steps towards understanding the structure of free virtual SD structures and the action of the $VB_n$'s or $VB_n^+$'s on them, leaving however many open questions. Notation $FVS_n$ will stand for a free virtual shelf on $n$ generators, and similarly for racks and quandles.

\subsubsection{Adding virtual copies of elements}

First, one easily verifies 

\begin{proposition}
The morphism of shelves defined by
\begin{align*}
FS_{\ZZ} &\longrightarrow FVS_1,\\
x_k &\longmapsto f^k(x),
\end{align*}
where $x:=x_1$ is the generator of $FVS_1,$ is an isomorphism. Isomorphisms for racks $FR_{\ZZ} \stackrel{~}{\longrightarrow} FVR_1$  and quandles $FQ_{\ZZ} \stackrel{~}{\longrightarrow} FVQ_1$ can be defined in the same way.
\end{proposition}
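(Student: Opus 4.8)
The plan is to prove the statement purely through universal properties, constructing a two-sided inverse rather than analysing normal forms of elements. Write $\phi \colon FS_{\ZZ} \to FVS_1$ for the shelf morphism in question. It is well defined because $FS_{\ZZ}$ is free as a shelf on $\{x_k : k \in \ZZ\}$, so the assignment $x_k \mapsto f^k(x)$ (with $f^k := (f^{-1})^{|k|}$ for $k<0$, legitimate since $f$ is invertible in the virtual shelf $FVS_1$) extends uniquely to a shelf homomorphism into the underlying shelf of $FVS_1$.

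First I would equip $FS_{\ZZ}$ itself with a virtual shelf structure. By the universal property of the free shelf, the set maps $x_k \mapsto x_{k+1}$ and $x_k \mapsto x_{k-1}$ extend to shelf endomorphisms $F$ and $G$ of $FS_{\ZZ}$; since $F \circ G$ and $G \circ F$ fix every generator, uniqueness of the extension forces both to equal $\Id_{FS_{\ZZ}}$, so $F$ is a shelf automorphism and $(FS_{\ZZ}, \lhd, F)$ becomes a virtual shelf. Next, by the universal property of the free virtual shelf $FVS_1$ on the single generator $x$, the map $x \mapsto x_0$ extends to a unique virtual shelf morphism $\psi \colon FVS_1 \to FS_{\ZZ}$. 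Being a virtual shelf morphism, $\psi$ intertwines $f$ and $F$, whence $\psi(f^k(x)) = F^k(x_0) = x_k$. A short check would also show that $\phi$ is itself a morphism of virtual shelves: $\phi(F(x_k)) = f^{k+1}(x) = f(\phi(x_k))$, so $\phi \circ F$ and $f \circ \phi$ agree on generators and therefore coincide.

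It then remains to verify that $\phi$ and $\psi$ are mutually inverse, which is immediate from uniqueness. The composite $\psi \circ \phi$ is a shelf endomorphism of $FS_{\ZZ}$ fixing every $x_k$, hence equals $\Id_{FS_{\ZZ}}$; and $\phi \circ \psi$ is a virtual shelf endomorphism of $FVS_1$ fixing the generator $x$, hence equals $\Id_{FVS_1}$. The same argument should apply verbatim to racks and quandles, replacing the free shelf by the free rack (resp. quandle) on $\ZZ$ and observing that the shift $F$ is automatically a rack (resp. quandle) automorphism, using that $f$ respects $\wlhd$ in a virtual rack.

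The only genuine point requiring care is the bookkeeping of which morphisms live in the shelf category and which in the virtual shelf category: the identities $\psi \circ \phi = \Id_{FS_{\ZZ}}$ and $\phi \circ \psi = \Id_{FVS_1}$ must each be deduced from uniqueness in the \emph{appropriate} category, and for the latter one must first confirm that both $\phi$ and $\psi$ are virtual (not merely plain) shelf morphisms, which is why equipping $FS_{\ZZ}$ with the shift automorphism $F$ is an essential preliminary step rather than a cosmetic one.
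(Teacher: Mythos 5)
Your argument is correct and complete. The paper itself offers no proof here --- it introduces the proposition with ``one easily verifies'' --- and the universal-property argument you give (equip $FS_{\ZZ}$ with the shift automorphism $F$, use freeness in the category of virtual shelves to produce $\psi$, then kill both composites by uniqueness of extensions agreeing on generators) is surely the intended ``easy verification,'' carried out carefully. You correctly identify the one point that actually needs attention, namely that $\phi\circ\psi=\Id_{FVS_1}$ must be deduced from freeness of $FVS_1$ \emph{as a virtual shelf}, which requires first checking that $\phi$ intertwines $F$ with $f$; and your remark that a shelf morphism between racks automatically preserves $\wlhd$ (apply $\lhd\,\varphi(b)$ and cancel) is exactly what makes the rack and quandle cases go through verbatim. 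The only cosmetic discrepancy is indexing: the paper writes $x:=x_1$ for the generator of $FVS_1$ while your inverse sends $x\mapsto x_0$; this is an off-by-one in the paper's own labelling and does not affect the isomorphism.
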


In what follows we implicitly use this isomorphism, writing $x_k$ instead of $f^k(x)$ when working in $FVS_1.$

Similarly, $FVS_n$ can be seen as a free shelf with a separate ``virtual" copy $x_{i,k}$ of $x_i$ for all $k \in \ZZ.$

\begin{corollary}
A free virtual SD structure can be seen as a free SD structure (on a larger set of generators).
\end{corollary}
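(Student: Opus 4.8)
The plan is to establish the corollary as a direct consequence of the immediately preceding proposition, which handles the monogenerated case $FVS_1 \cong FS_{\ZZ}$, by extending that isomorphism to an arbitrary finite (or countable) generating set. First I would fix a free virtual shelf $FVS_n$ on generators $x_1, \ldots, x_n$ with its distinguished shelf automorphism $f$, and I would introduce the set of formal symbols $X = \{x_{i,k} : 1 \le i \le n, \, k \in \ZZ\}$. The claim to prove is that $FVS_n$, viewed as an ordinary shelf by forgetting $f$, is isomorphic to the free shelf $FS(X)$, under the correspondence $x_{i,k} \longmapsto f^k(x_i)$.

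The key steps, in order, are as follows. I would construct the map $\Phi: FS(X) \longrightarrow FVS_n$ sending each generator $x_{i,k}$ to $f^k(x_i)$; since $FS(X)$ is free as a shelf, this extends uniquely to a shelf morphism. For the inverse direction, I would use the universal property of $FVS_n$ as a \emph{virtual} shelf: a virtual-shelf morphism out of $FVS_n$ is determined by the images of $x_1, \ldots, x_n$, subject only to the target carrying a shelf automorphism. The natural candidate target is $FS(X)$ equipped with the shelf automorphism $g$ defined on generators by $g(x_{i,k}) = x_{i,k+1}$, extended multiplicatively by the self-distributivity-compatibility $g(a \lhd b) = g(a) \lhd g(b)$; one must check $g$ is well-defined and bijective (its inverse sends $x_{i,k} \mapsto x_{i,k-1}$), which makes $(FS(X), \lhd, g)$ a legitimate virtual shelf. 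Then the assignment $x_i \mapsto x_{i,0}$ induces a virtual-shelf morphism $\Psi: FVS_n \longrightarrow FS(X)$, and crucially $\Psi$ intertwines $f$ with $g$, so that $\Psi(f^k(x_i)) = g^k(x_{i,0}) = x_{i,k}$. Finally I would verify $\Phi$ and $\Psi$ are mutually inverse by checking the composites on the respective generating sets, where they act as the identity.

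The main obstacle I expect is the well-definedness and bijectivity of the automorphism $g$ on $FS(X)$: one must confirm that imposing $g(x_{i,k}) = x_{i,k+1}$ together with the homomorphism condition genuinely produces a well-defined shelf endomorphism of the free shelf (this is immediate from freeness, since $g$ is specified on generators) and, more delicately, that it is invertible rather than merely an endomorphism. Invertibility follows by constructing $g^{-1}$ the same way via $x_{i,k} \mapsto x_{i,k-1}$ and checking $g \circ g^{-1}$ and $g^{-1} \circ g$ fix all generators, hence equal the identity by freeness. The remaining verifications --- that $\Phi$ and $\Psi$ are mutually inverse and that $\Psi$ intertwines $f$ and $g$ --- are then routine applications of the universal properties, reducing in each case to a check on generators. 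The rack and quandle versions follow by the identical argument, replacing $FS$ and $FVS$ throughout by the corresponding rack or quandle free objects, exactly as in the monogenerated proposition.
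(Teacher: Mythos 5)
Your proposal is correct and follows essentially the same route as the paper: the corollary is stated there as an immediate consequence of the preceding proposition ($FS_{\ZZ}\stackrel{\sim}{\longrightarrow} FVS_1$, $x_k\mapsto f^k(x)$, with the remark that $FVS_n$ is a free shelf on the virtual copies $x_{i,k}$), and your universal-property argument --- building $\Phi$ from freeness of $FS(X)$, building $\Psi$ from freeness of $FVS_n$ as a virtual shelf with target $(FS(X),\lhd,g)$, and checking the composites on generators --- is precisely the verification the paper leaves to the reader. The only point deserving the care you give it is the well-definedness and invertibility of the shift automorphism $g$, which you handle correctly via freeness.
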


\subsubsection{Free virtual shelves: an attempt to adapt Dehornoy's methods}

Let us now work with shelves, trying to understand how nice the $VB_n^+$-actions on $FS_1^{\times n}$ and $FVS_1^{\times n}$ are (cf. proposition \ref{thm:VSaction}). These actions will be called \emph{real} and \emph{virtual} respectively for brevity. The generator $x_1$ of $FS_1$ will be denoted by $x.$ The following \emph{devirtualization} shelf morphism will be systematically used to extend known results for $FS_1$ to the virtual world:
\begin{align*}
FVS_1 \simeq FS_{\ZZ} & \stackrel{devirt}{\twoheadrightarrow} FS_1,\\
x_k &\longmapsto x.
\end{align*} 

First, the freeness result from proposition \ref{thm:FSActsFreely} does not hold in the virtual context:

\begin{lemma}
The virtual action of the positive virtual braid monoid $VB_n^+$ on $FVS_1^{\times n}$ is not free.
\end{lemma}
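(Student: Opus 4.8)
The plan is to disprove freeness directly, by exhibiting a single point of $FVS_1^{\times n}$ with nontrivial stabiliser in $VB_n^+$. Recall that an action of a monoid is free precisely when each orbit map $w\mapsto w(\overline{a})$ is injective; it therefore suffices to produce one tuple $\overline{a}$ and one nontrivial $w\in VB_n^+$ with $w(\overline{a})=\overline{a}$, since then $w$ and the neutral element act identically on $\overline{a}$. This is exactly the property that fails for the real action of proposition \ref{thm:FSActsFreely}, so I would look for the failure among the genuinely new, virtual, generators $\zeta_i$.

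The candidate I would use is $w=\zeta_1$ (which requires $n\ge 2$), together with the ``$f$-shifted'' point $\overline{a}=(x,f(x),a_3,\ldots,a_n)$, where $x$ is the generator of $FVS_1$ and $a_3,\ldots,a_n$ are arbitrary. The computation is immediate from \eqref{eqn:VBactionOnVRacks}: since $\zeta_1$ replaces the pair $(a_1,a_2)$ by $(f^{-1}(a_2),f(a_1))$, substituting $a_1=x$, $a_2=f(x)$ yields $(f^{-1}(f(x)),f(x))=(x,f(x))$, so $\overline{a}$ is fixed. More generally $\zeta_i$ fixes any tuple with $a_{i+1}=f(a_i)$: the transposition performed by $\zeta_i$ is exactly compensated by the two twists $f^{-1}$ and $f$. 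Hence the orbit map at $\overline{a}$ is not injective and the action is not free.

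Two routine verifications make this rigorous, and I expect neither to be a real obstacle. First, $\zeta_1\ne 1$ in $VB_n^+$: apply the monoid homomorphism $VB_n^+\to S_n$ sending both $\sigma_i$ and $\zeta_i$ to the transposition $s_i$, under which $\zeta_1\mapsto s_1\ne e$. Second, $\overline{a}$ is a genuine point, and indeed $f(x)\ne x$: under the isomorphism $FVS_1\simeq FS_{\ZZ}$ the automorphism $f$ is the index shift $x_k\mapsto x_{k+1}$, which has no fixed points, so $x$ and $f(x)$ are distinct. The only genuine ``idea'' in the argument is spotting the right fixed point — recognising that $\zeta_i$ is an involution whose twist profile makes the $f$-shifted diagonal invariant.

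The conceptual point worth emphasising is why this mechanism has no analogue in the real case. There $B_n^+$ acts through the $\sigma_i$ alone, and each $\sigma_i$ strictly increases the total number of occurrences of $\lhd$ across the entries, since it sends $(a_i,a_{i+1})$ to $(a_{i+1},a_i\lhd a_{i+1})$; consequently no nontrivial positive braid word can fix any tuple, consistently with Dehornoy's ordering argument in proposition \ref{thm:FSActsFreely}. It is precisely the involutive, order-two virtual generator $\zeta_i$ that acquires fixed points, and this is what destroys freeness in the virtual setting. (For $n=1$ the monoid $VB_1^+$ is trivial and the statement is vacuous, so the conclusion is understood for $n\ge 2$.)
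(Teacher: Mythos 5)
Your proposal is correct and uses exactly the paper's mechanism: the paper's proof observes that all $\zeta_i$'s fix tuples of the form $(x_k,x_{k+1},\ldots,x_{k+n-1})$, which is precisely your observation that $\zeta_i$ fixes any tuple with $a_{i+1}=f(a_i)$, specialized to the fully $f$-shifted diagonal. Your additional verifications ($\zeta_1\neq 1$ via the forgetful map to $S_n$, and $f(x)\neq x$ via $FVS_1\simeq FS_{\ZZ}$) are sound and merely make explicit what the paper leaves implicit.
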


\begin{proof}
It is sufficient to notice that all $\zeta_i$'s act as identity on $n$-tuples of the form $(x_k,x_{k+1},\ldots, x_{k+n-1}) \in FVS_1^{\times n}.$
\end{proof}

The author does not know if the  virtual action is faithful. Here are some arguments giving hope for it.

\medskip

Recall that choosing flips as actions corresponding to $\zeta_i$'s can lead to a forbidden YB relation, thus implying non-faithfulness. For the virtual action there is no such danger, as one can easily check

\begin{lemma}
The virtual action distinguishes the two sides of each forbidden YB relation from remark \ref{rmk:OtherYB}.
\end{lemma}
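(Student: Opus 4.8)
The plan is to take the forbidden Yang--Baxter relation from remark \ref{rmk:OtherYB} and show that its two sides, regarded as operators on $FVS_1^{\times n}$, disagree on some concrete tuple. The relation in question compares the virtual braids $\sigma_i \sigma_{i+1} \zeta_i$ and $\zeta_{i+1} \sigma_i \sigma_{i+1}$; since everything happens at strands $i,i+1,i+2$ and the action is by identity elsewhere, I would reduce at once to the case $n=3$, $i=1$, and study the two words $\sigma_1 \sigma_2 \zeta_1$ and $\zeta_2 \sigma_1 \sigma_2$ acting on a triple $(a,b,c) \in FVS_1^{\times 3}$. First I would simply compute both outputs using the action formulas \eqref{eqn:VBactionOnVRacks} and \eqref{eqn:VBactionOnRacks2}, recalling that in $FVS_1$ the automorphism $f$ is an honest operator (the generator being $x$, one has $x_k = f^k(x)$) so that $\zeta_i$ genuinely twists by $f^{\pm 1}$ rather than acting as a bare flip.

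The key step is the bookkeeping of these two compositions. Reading the actions from bottom to top, applying $\zeta_1$ then $\sigma_2$ then $\sigma_1$ for the left-hand word (and $\sigma_2$, $\sigma_1$, $\zeta_2$ for the right-hand word), each intermediate tuple is obtained by the prescribed permutation-plus-$\lhd$-plus-$f^{\pm 1}$ recipe. After carrying out the substitutions one lands on two triples whose entries are explicit words in $a,b,c$ and the operators $\lhd, f, f^{-1}$. I expect the two results to differ in at least one coordinate: the $f^{\pm 1}$ twists introduced by $\zeta_1$ versus $\zeta_2$ attach to different arguments, and because $f$ is injective this discrepancy cannot be absorbed. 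To make the inequality unambiguous I would then evaluate on a specific generating tuple, the natural choice being $(x,x,x) = (x_0,x_0,x_0)$, so that the resulting words become concrete elements $x_j$ (or short $\lhd$-expressions in them) of $FVS_1 \simeq FS_{\ZZ}$, where distinct such elements really are distinct by freeness.

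The main obstacle — really the only place where care is needed — is ensuring that the coordinate where the two sides disagree is one that the \emph{free} structure can detect, as opposed to an accidental collision forced by self-distributivity. This is exactly the point of the earlier \emph{devirtualization} morphism: even if one feared the $\lhd$-parts might coincide, the $f$-grading distinguishes the two sides, and one can separate the two effects by first recording the $f$-exponents (which survive under no relation among the $x_k$) and only afterwards invoking freeness of $FS_{\ZZ}$ for the $\lhd$-structure. Concretely, projecting via $devirt$ collapses the $f$-data, so if the two sides already differed before devirtualization purely through their $f$-exponents on some generator, that difference is visible in $FS_\ZZ \simeq FVS_1$ itself. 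I would therefore finish by exhibiting the differing coordinate explicitly and noting that its two values have different images under the identification $FVS_1 \simeq FS_{\ZZ}$, which settles the claim; the contrast with proposition \ref{thm:VBActsOnRacks}, where the flip action loses precisely this $f$-information and thereby collapses the forbidden relation, explains why the virtual action succeeds here.
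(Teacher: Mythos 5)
Your overall strategy is the right one, and it is surely what the paper intends by ``one can easily check'': the paper gives no written proof of this lemma, so the only thing to compare against is the implicit direct verification, and a direct computation on a concrete tuple is exactly that. Reducing to $n=3$, $i=1$ and evaluating at $(x,x,x)$ is sound. Carrying out the bookkeeping you describe: $\sigma_1\sigma_2\zeta_1(a,b,c)=(c,\ f^{-1}(b)\lhd c,\ f(a)\lhd c)$ while $\zeta_2\sigma_1\sigma_2(a,b,c)=(c,\ f^{-1}(b\lhd c),\ f(a\lhd c))=(c,\ f^{-1}(b)\lhd f^{-1}(c),\ f(a)\lhd f(c))$, so at $a=b=c=x_0$ the third coordinates are $x_1\lhd x_0$ versus $x_1\lhd x_1$. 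The second forbidden relation ($\zeta_1\sigma_2\sigma_1$ versus $\sigma_2\sigma_1\zeta_2$) is handled the same way and yields $(x_0\lhd x_0)\lhd x_0$ versus $(x_0\lhd x_{-1})\lhd x_1$ in the third coordinate; you should say explicitly that both forbidden relations (not just one) are covered, even if only by symmetry.

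Two points in your write-up need tightening. First, you stop at ``I expect the two results to differ,'' but exhibiting the differing coordinate is the entire content of the lemma; the computation above is short and should be written out. Second, and more substantively, your appeal to devirtualization is backwards: $devirt$ sends every $x_k$ to $x$ and is a shelf morphism, so it identifies $x_1\lhd x_0$ with $x_1\lhd x_1$ --- it erases precisely the $f$-exponent data that distinguishes the two sides (indeed the two sides become equal under the real action). What you actually need is a reason why $x_1\lhd x_0\neq x_1\lhd x_1$ \emph{inside} $FVS_1\simeq FS_{\ZZ}$; ``freeness'' alone is not quite enough, since a free shelf is a quotient of the free magma by \eqref{eqn:SelfDistr} and distinct terms can represent the same element. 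Here the fix is one line: a term of length $1$ such as $x_1\lhd x_0$ admits no application of \eqref{eqn:SelfDistr} at all, so its equivalence class is a singleton and the two elements are distinct (alternatively, invoke the invariance of the multiset of first subscripts $\of$, which the paper establishes a few lines later, or map to $\Conj(F(\{x_i\}))$ where the images $x_0^{-1}x_1x_0$ and $x_1$ visibly differ). With those repairs the argument is complete and coincides with the paper's intended check.
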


\medskip

A more refined study of the structure of $FVS_1$ is needed to prove further results.

The following definition is inspired by \cite{Dehornoy}.

\begin{definition}
Fix an alphabet $X.$ The \emph{free magma} $T_{X}$ on $X$ is the closure of the set $X$ under the formal (non-associative!) operation $(t_1,t_2)\mapsto t_1 \ast t_2.$ The elements of $T_{X}$ are called \emph{terms}. Notations  $T_{\{x_1, \ldots,x_n \} }$ and $T_{\{x_i, i \in \ZZ \} }$ are abbreviated as $T_n$ and $T_{\ZZ}$ respectively.
\end{definition}

Consider the maps
\begin{align*}
\d: T_{\ZZ} &\twoheadrightarrow FVS_1,& \d: T_{1} &\twoheadrightarrow FS_1,\\
x_i &\mapsto x_i,&x_1 &\mapsto x,\\
\ast &\mapsto \lhd; & \ast &\mapsto \lhd.
\end{align*} 
Concretely, one simply factorizes by relation \eqref{eqn:SelfDistr}. The notation $\d$ comes from the word ``distributivity".

\begin{definition}
Take a term $t=((x_f \ast t_1) \ast \cdots )\ast t_k$ in $T_{\ZZ}$ or $T_n.$ Its \emph{first subscript}, denoted by $\f(t),$ is defined to be $f.$ Further, its \emph{sequence / multiset of first subscripts} is the sequence / multiset formed by $\f(t_1), \ldots,\f(t_k).$ The multiset of first subscripts is denoted by $\of(t).$ Finally, $l(t):=k$ is called the \emph{length} of $t.$
\end{definition}
Note that for $n=1$ only the length function $l$ is relevant.

Playing with relation \eqref{eqn:SelfDistr}, one gets
\begin{lemma}
For two terms giving the same shelf element, i.e. $t, t' \in T_{\ZZ}$ such that $\d(t)=\d(t'),$ one has $l(t)=l(t'),$ $\f(t)=\f(t')$ and $\of(t)=\of(t').$ Moreover, given a $t \in T_{\ZZ}$ with $l(t)=k$ and a $\theta \in S_k,$ there exists a $t' \in T_{\ZZ}$ such that $\d(t)=\d(t')$ and their sequences of first subscripts differ precisely by the permutation $\theta.$ The same holds in $T_1.$
\end{lemma}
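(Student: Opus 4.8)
The plan is to reduce the whole statement to the behaviour of a \emph{single} application of the self-distributivity relation \eqref{eqn:SelfDistr}, read as a rewriting rule on subterms, and then to track the effect of one such move on the left-normed decomposition $t = ((x_f \ast t_1) \ast \cdots) \ast t_k$. First I would record that every term admits such a presentation, and uniquely so: this follows by induction on the size of $t$ by peeling off the topmost $\ast$, and it is what makes $\f(t) = f$, $l(t) = k$, the sequence $(\f(t_1), \ldots, \f(t_k))$ and its underlying multiset $\of(t)$ well-defined functions on $T_{\ZZ}$. Next I would invoke the description of $FVS_1$ as $T_{\ZZ}$ modulo the congruence generated by \eqref{eqn:SelfDistr}: this means $\d(t) = \d(t')$ holds if and only if $t$ and $t'$ are linked by a finite chain of one-step rewrites, each replacing a subterm of shape $(a \ast b) \ast c$ by $(a \ast c) \ast (b \ast c)$ (or conversely). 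By transitivity it then suffices to show that one rewrite preserves $l$, $\f$ and $\of$.

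The core computation is the \emph{top-level} case. Writing $a$ in left-normed form with right factors $a_1, \ldots, a_p$, the term $(a \ast b) \ast c$ has right factors $a_1, \ldots, a_p, b, c$, whereas $(a \ast c) \ast (b \ast c)$ has right factors $a_1, \ldots, a_p, c, (b \ast c)$. Since $\f(b \ast c) = \f(b)$, the two sequences of first subscripts agree except that their last two entries are transposed; in particular $l$, $\f$ and $\of$ are unchanged, and the move realizes precisely the adjacent transposition of the last two subscripts (the reverse rewrite being the same transposition read backwards). I would then promote this to a rewrite at an arbitrary position by strong induction on the size of $t$: splitting $t = t_L \ast t_R$ at the top $\ast$, a rewrite inside $t_R$ leaves $\f(t_R)$, hence the entire top-level sequence, intact by the induction hypothesis, while a rewrite inside $t_L$ preserves $l(t_L)$, $\f(t_L)$ and $\of(t_L)$, whence $l(t)$, $\f(t)$ and the multiset $\of(t) = \of(t_L) \sqcup \{\f(t_R)\}$ are preserved too. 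This settles the first assertion.

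For the second assertion I would exhibit each adjacent transposition $(j, j{+}1)$ of the sequence of first subscripts as a single top-level SD move applied to the spine subterm $(w \ast t_j) \ast t_{j+1}$, where $w = ((x_f \ast t_1) \ast \cdots) \ast t_{j-1}$; this rewrites to $(w \ast t_{j+1}) \ast (t_j \ast t_{j+1})$, which by the core computation swaps exactly the entries $\f(t_j)$ and $\f(t_{j+1})$ while keeping the term left-normed of length $k$. Since the adjacent transpositions generate $S_k$, I would decompose the prescribed $\theta$ into such transpositions and apply the corresponding moves one after another, each preserving the $\d$-class, to produce the required $t'$. The case of $T_1$ is immediate: there all first subscripts equal $1$, so only $l$ carries information and its invariance is already contained in the first part.

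I expect the main difficulty to be bookkeeping rather than conceptual. The delicate point is to verify that applying \eqref{eqn:SelfDistr} to a \emph{spine} subterm genuinely amounts to a top-level move \emph{on that subterm}, so that the ``last two entries'' of its sequence are indeed the adjacent pair one intends to swap, and that the result is still left-normed of the same length $k$, allowing the transpositions to be chained. Confirming the uniqueness of the left-normed decomposition and checking that the backward rewrite is covered are the remaining routine verifications.
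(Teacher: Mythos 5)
Your argument is correct and complete: the paper itself offers no proof (it merely says the lemma follows from ``playing with'' relation \eqref{eqn:SelfDistr}), and your reduction to a single rewrite, the top-level computation showing that one SD move transposes the last two first subscripts while fixing $l$, $\f$ and $\of$, the induction over the position of the rewrite, and the realization of adjacent transpositions by spine moves are exactly the verifications being left to the reader. Nothing is missing.
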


Thus one can define $l(a),$ $\f(a)$ and $\of(a)$ for any $a \in FVS_1$ or $a \in FS_1$ as $l(t),$ $\f(t)$ and $\of(t)$ for any term $t$ representing $a.$ 

\medskip

This lemma permits to extract useful information from the virtual action. For this, consider the forgetful monoid morphism
\begin{align*}
\for: VB_n^+ &\twoheadrightarrow S_n,\\
\zeta_i, \sigma_i &\mapsto \zeta_i.
\end{align*}
The $\zeta_i$'s on the right denote standard generators of $S_n.$

\begin{proposition}\label{thm:VActionFor}
\begin{enumerate}
\item The real action of $\theta \in VB_n^+$ on $FS_1^{\times n}$ permits to recover $\for(\theta)$ and the number of the $\sigma_i$'s in $\theta.$ 
\item The virtual action of $\theta \in VB_n^+$ on $FVS_1^{\times n}$ permits to recover $\for(\theta)$ and the number of the $\sigma_i$'s in $\theta.$ 
\item The virtual action of $\theta \in VB_n^+$ on $FVS_1^{\times n}$ permits to recover, for each strand of the virtual braid $\theta,$ the multiset of strands passing (non-virtually!) under it.
\end{enumerate}
\end{proposition}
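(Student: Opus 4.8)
The plan is to track, under the shelf action, the evolution of the functions $l$, $\f$ and $\of$ (which are well defined on shelf elements by the preceding lemma), starting from a suitable ``marked'' initial tuple whose coordinates are easy to tell apart. The guiding principle is that a $\zeta_i$ merely transposes two coordinates, while a $\sigma_i$ transposes and additionally applies one $\lhd$-operation, each of which increases the length $l$ by exactly one and appends a new entry to the multiset of first subscripts. So the bookkeeping of $l$ and $\of$ should detect precisely the non-virtual crossings.

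\medskip

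For item 1, I would feed the real action the initial tuple $(x,x,\ldots,x)\in FS_1^{\times n}$, in which each coordinate is the generator with $l(x)=0$. After applying $\theta$, the underlying permutation of the coordinate positions is exactly $\for(\theta)$: both generators act by first swapping positions $i,i+1$, and the extra $\lhd$ performed by a $\sigma_i$ does not change which slot an element lands in. This recovers $\for(\theta)$. To recover the number of $\sigma_i$'s, I would sum the lengths $\sum_j l(a_j)$ over the resulting tuple. Since we start from total length $0$, each $\zeta_i$ leaves every $l(a_j)$ unchanged, while each $\sigma_i$ replaces $a_i$ by $a_i\lhd a_{i+1}$, raising $l(a_i)$ by one and hence the total length by one. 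Therefore $\sum_j l(a_j)$ equals the number of $\sigma_i$'s occurring in $\theta$, as claimed.

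\medskip

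For item 2, the same argument works verbatim after devirtualization: the virtual action on $FVS_1^{\times n}$ composed with $devirt$ is the real action, and both $\for(\theta)$ and the count of $\sigma_i$'s are invariants of the devirtualized picture, so they are recovered identically. For item 3, which is the substantive part and where the main obstacle lies, I would exploit the \emph{first subscripts} rather than the mere length. Feed the virtual action the staircase tuple $(x_1,x_2,\ldots,x_n)$ in $FVS_1\simeq FS_{\ZZ}$, so that coordinate $j$ carries the distinct label $\f=j$. A $\zeta_i$ now permutes positions \emph{and}, via $f^{\pm1}$, shifts the labels, so the labels no longer directly name the original strands; the real difficulty is to argue that the label-shifting introduced by the $\zeta_i$'s can be disentangled. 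The key is that each genuine (non-virtual) undercrossing, recorded by a $\sigma_i$, appends the first subscript of the \emph{overpassing} strand to the multiset $\of$ of the \emph{underpassing} strand, and the lemma guarantees this multiset is an honest invariant of the resulting shelf element. Reading off $\of(a_j)$ for each final coordinate and then undoing the global label shift prescribed by $\for(\theta)$ (which item 2 already recovers) should yield, for each strand, exactly the multiset of strands that pass non-virtually under it. The hard part will be the careful inductive verification that the combined effect of the interspersed $\zeta_i$'s on the labels is precisely the permutation $\for(\theta)$, so that the correction step is unambiguous and the multiset $\of(a_j)$ matches the geometric ``strands passing under strand $j$'' count.
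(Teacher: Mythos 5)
Your proposal correctly identifies the right invariants ($l$, $\f$, $\of$) and the length-counting argument for the number of $\sigma_i$'s matches the paper's, but there are two genuine gaps. The smaller one is in item 1: from the single value $\theta(x,x,\ldots,x)$ you cannot recover $\for(\theta)$, because all input coordinates are identical and therefore the ``underlying permutation of the coordinate positions'' is invisible in the output (e.g.\ $\theta=1$ and $\theta=\zeta_1$ send $(x,\ldots,x)$ to the same tuple). The statement only asserts that the \emph{action} determines $\for(\theta)$, so one must evaluate it on more than one tuple: the paper perturbs the input, replacing the $i$-th entry $x$ by $x\lhd x$ and observing that exactly one output length $l_j$ increases by one, namely at $j=\for(\theta)(i)$. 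Your own preamble calls for a ``marked'' initial tuple, but the tuple you actually feed in item 1 is unmarked.

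The serious gap is in item 3, precisely at the step you flag as ``the hard part.'' Your proposed correction --- read off $\of(y_j)$ and then ``undo the global label shift prescribed by $\for(\theta)$'' --- cannot work, because the number that a given $\sigma$-crossing appends to $\of$ is the first subscript of the overpassing element \emph{at the moment of that crossing}, and this subscript drifts (by $\pm 1$ at each virtual crossing that strand traverses) as the braid is read. Two crossings under the same strand generally contribute \emph{different} integers to the multiset, and there is no single ``global shift'' indexed by $\for(\theta)$ that restores them; worse, with the staircase input $(x_1,\ldots,x_n)$ the drift immediately causes collisions between labels of distinct strands. The paper's key idea, which your proposal is missing, is quantitative: first recover $M$, the number of $\sigma_i$'s in $\theta$ (item 2), set $N=(n-1)(M+1)+1$, and feed the widely spaced tuple $(x_{2N},x_{4N},\ldots,x_{2nN})$. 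One then bounds the total drift of any first subscript over the whole computation by $(n-1)(M+1)<N$, so that rounding each entry of $\of(y_i)$ to the nearest multiple of $2N$ unambiguously names the overpassing strand. Without some such separation estimate the disentangling you defer to an ``inductive verification'' does not go through.
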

Note that the multiset in the last point is stable under all authorized YB relations and is thus well-defined. It can be seen as a generalization of linking numbers.

\begin{proof}
\begin{enumerate}
\item Put $(a_1,a_2\ldots, a_n):=\theta(x,x,\ldots,x)$ and $l_j:=l(a_j).$ Changing the $i$th element $x$ in the $n$-tuple $(x,x,\ldots,x)$ to $x \lhd x$ increases exactly one of the $l_j$'s by one. This $j$ is precisely the value of $\for(\theta)(i).$

Further, the real action of a $\zeta_i$ on $FS_1^{\times n}$ does not change the total length of the elements of the $n$-tuple, whereas the real action of a $\sigma_i$ increases it by $1.$ Thus one recovers the number $M$ of the $\sigma_i$'s in $\theta.$ 

\item Follows from the previous point by devirtualizing.

\item
In the virtual context the $\f$'s and $\of$'s refine the information given by the length function. 

The virtual action of a $\zeta \in S_n$ seen as an element of $VB_n^+$ (intuitively this is clear; a rigorous definition will be given later) can be written for ``simple" $n$-tuples like this:
$$\zeta (x_{i_1},x_{i_2}, \ldots)= (x_{i_{\zeta^{-1}(1)}+1-\zeta^{-1}(1)},x_{i_{\zeta^{-1}(2)}+2-\zeta^{-1}(2)}, \ldots).$$ 
Note that $|k-\zeta^{-1}(k)| \le n-1$ for all the $k$'s. In general, working with first subscripts, one sees that $\zeta$ applied to a general $n$-tuple changes the first subscript $\f$ of the element on each strand at most by $n-1.$ Recall the number $M$ of the $\sigma_i$'s in $\theta$ determined in the previous point. Remark also that each $\sigma_i$ simply switches the first subscripts of two of the elements in an $n$-tuple.

Summarizing, the action of our $\theta,$ as well as its subwords, changes the $\f$ of the element on any strand at most by $(n-1)(M+1).$ Put $$N:= (n-1)(M+1)+1$$ and $$(y_1,y_2,\ldots, y_n):=\theta (x_{2N}, x_{4N}, \ldots, x_{2nN}).$$ The $i$th strand of $\theta$ will be called $2iN$ for simplicity.

For any $i,$ replacing each number in $\of(y_i)$ by the closest multiple of $2N,$ one recovers the multiset of strands passing under the strand corresponding to the multiple of $2N$ which is the closest to $\f(y_i).$  This follows from the observations $\of(a \lhd b) = \of(a)\cup \f(b)$ and $\f(a \lhd b) = \f(a),$ from the explicit formulas defining the virtual action, from the estimations for subscript modifications above, and from the independence of $\f$ and $\of$ from the choice of term representing the braid.
\end{enumerate}
\end{proof}

\medskip

Consider now monoid morphisms
\begin{align}\label{eqn:BraidPerm}
S_n & \stackrel{i_S}{\longrightarrow} VB_n^+,  & B_n^+ & \stackrel{i_B}{\longrightarrow} VB_n^+, \\
\zeta_i &\longmapsto \zeta_i;   & \sigma_i &\longmapsto \sigma_i.\notag
\end{align} 

\begin{proposition}\label{thm:SBparts}
The action of $S_n$ (resp. $B_n^+$) on $FS_1^{\times n}$ induced from the real action of $VB_n^+$ via morphism $i_S$ (resp. $i_B$) is faithful (resp. free).
\end{proposition}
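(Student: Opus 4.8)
The plan is to handle the two assertions separately; in each case I would first identify the action obtained by restricting the real $VB_n^+$-action along the relevant inclusion ($i_S$ or $i_B$) with an action whose behaviour is already understood, and then reduce to an elementary property of $FS_1$. Both $i_S$ and $i_B$ are well defined, since by definition the $\zeta_i$'s in $VB_n^+$ satisfy the relations of $S_n$ and the $\sigma_i$'s those of $B_n^+$, so the two composite actions $S_n\to VB_n^+\to \operatorname{Sym}(FS_1^{\times n})$ and $B_n^+\to VB_n^+\to \operatorname{Sym}(FS_1^{\times n})$ make sense.

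For the symmetric-group part, along $i_S$ each generator $\zeta_i$ acts on $FS_1^{\times n}$ by the real formula \eqref{eqn:VBactionOnRacks}, i.e.\ by the transposition of the $i$th and $(i+1)$th coordinates. Since these coordinate flips satisfy the defining relations of $S_n$ inside $\operatorname{Sym}(FS_1^{\times n})$, the induced action is exactly the standard permutation action $\theta\cdot(a_1,\dots,a_n)=(a_{\theta^{-1}(1)},\dots,a_{\theta^{-1}(n)})$. To prove faithfulness it suffices to exhibit a single tuple with pairwise distinct entries: using the length function $l$ from the earlier lemma, $FS_1$ contains an element of every length $0,1,2,\dots$ (for instance $((x\lhd x)\lhd\cdots)\lhd x$), so I would fix $\overline a=(a_1,\dots,a_n)$ with $l(a_j)=j$. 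Distinct permutations send $\overline a$ to tuples with distinct length-profiles, hence to distinct tuples — in fact reading off the lengths of $\theta(\overline a)$ recovers $\theta$ explicitly — which gives faithfulness (and, along the way, injectivity of $i_S$).

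For the braid-monoid part, along $i_B$ each generator $\sigma_i$ acts by the real formula \eqref{eqn:VBactionOnRacks2}, which is precisely the shelf-action formula of proposition \ref{thm:SDaction}. As the real $VB_n^+$-action is a monoid action and $i_B$ is a monoid morphism, the induced action of a word $\beta\in B_n^+$ coincides, as a self-map of $FS_1^{\times n}$, with the image of $\beta$ under the shelf action of proposition \ref{thm:SDaction}. Hence the $B_n^+$-action induced along $i_B$ is literally the action of proposition \ref{thm:FSActsFreely}, which is free; freeness of the induced action — injectivity of every orbit map $\beta\mapsto\beta(\overline a)$ — is therefore immediate, and in particular $i_B$ is injective.

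I expect no serious obstacle here: the only genuinely deep input, Dehornoy's ordering of $FS_1$ and $B_n$, is already encapsulated in proposition \ref{thm:FSActsFreely}, and the rest is the elementary verification that $FS_1$ carries elements of pairwise distinct lengths. The one point deserving care is the observation that $i_S$ and $i_B$ intertwine the restricted $VB_n^+$-action with the permutation action and the shelf action respectively, after which faithfulness and freeness are simply inherited. It is also worth flagging the asymmetry of the statement — the $S_n$-action is faithful but never free, since every diagonal tuple $(a,\dots,a)$ is fixed by all of $S_n$ — which is exactly why the two halves assert different strengths.
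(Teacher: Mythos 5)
Your proposal is correct and follows essentially the same route as the paper: the $B_n^+$ part is reduced to proposition \ref{thm:FSActsFreely}, and the $S_n$ part is handled by observing that the induced action is the permutation action and exhibiting a tuple with entries of pairwise distinct lengths (the paper uses $a_k=((x\lhd x)\lhd\cdots)\lhd x$ with $l(a_k)=k-1$). Your additional remarks on the injectivity of $i_S$, $i_B$ and the non-freeness of the $S_n$-action are consistent with the paper's surrounding discussion.
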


\begin{proof}
The statement about $B_n^+$ follows from proposition \ref{thm:FSActsFreely}.

As for $S_n,$ its induced action is the usual action by permutations. Elements $a_k:=((x \lhd x) \lhd \cdots )\lhd x,$ with $k$ occurrences of $x,$ have different lengths ($l(a_k)=k-1$) and are hence pairwise distinct. Therefore a permutation $\zeta \in S_n$ is completely defined by $\zeta(a_1,a_2, \ldots, a_n).$
\end{proof}

Devirtualizing, as usual, one gets the same statement for the virtual action.

Results of this kind allow one to easily get a useful

\begin{corollary}
The submonoid  of $VB_n^+$ generated by the $\zeta_i$'s (resp. $\sigma_i$'s) is isomorphic to $S_n$ (resp. $B_n^+$).
\end{corollary}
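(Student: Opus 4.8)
The plan is to deduce the corollary directly from Proposition \ref{thm:SBparts}, the point being that the two morphisms $i_S$ and $i_B$ from \eqref{eqn:BraidPerm} already realize the desired isomorphisms once we establish their injectivity. First I would observe that $i_S$ and $i_B$ are indeed well-defined monoid morphisms: the defining relations of $S_n$ are \eqref{eqn:BrComm}, \eqref{eqn:BrYB} and \eqref{eqn:Symm} for the $\zeta_i$'s, and the defining relations of $B_n^+$ are \eqref{eqn:BrComm} and \eqref{eqn:BrYB} for the $\sigma_i$'s, and both sets occur verbatim among the relations of $VB_n^+$ by definition \ref{def:VB}. Since these morphisms send the standard generators to the $\zeta_i$'s, respectively the $\sigma_i$'s, their images are by construction exactly the submonoids generated by the $\zeta_i$'s and by the $\sigma_i$'s. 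Thus $i_S$ and $i_B$ are automatically surjective onto these submonoids, and it remains only to prove injectivity.

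For injectivity I would use that the induced actions appearing in Proposition \ref{thm:SBparts} factor as the composites of $i_S$ (resp. $i_B$) with the real action of $VB_n^+$ on $FS_1^{\times n}$. Concretely, if $i_S(\zeta)=i_S(\zeta')$ for some $\zeta,\zeta' \in S_n$, then $\zeta$ and $\zeta'$ induce the same transformation of $FS_1^{\times n}$; since Proposition \ref{thm:SBparts} asserts that this induced $S_n$-action is faithful, we conclude $\zeta=\zeta'$, so $i_S$ is injective. Hence $S_n \cong \Im(i_S)$, the submonoid generated by the $\zeta_i$'s. The same argument applies to $i_B$: freeness of the induced $B_n^+$-action on $FS_1^{\times n}$ (Proposition \ref{thm:SBparts}, ultimately Proposition \ref{thm:FSActsFreely}) implies in particular that distinct elements of $B_n^+$ act distinctly, so $i_B(\alpha)=i_B(\beta)$ forces $\alpha=\beta$. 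Therefore $B_n^+ \cong \Im(i_B)$, the submonoid generated by the $\sigma_i$'s.

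There is no serious obstacle here; the statement is a formal consequence of the faithfulness/freeness already established. The only points requiring a word of care are, first, verifying that the induced actions genuinely factor through $i_S$ and $i_B$ (immediate from how the induced actions are defined in \eqref{eqn:BraidPerm}), and second, recording the elementary implication that a free monoid action is in particular faithful, so that freeness delivers the injectivity of $i_B$. With these remarks in place the isomorphisms $S_n \xrightarrow{\sim} \Im(i_S)$ and $B_n^+ \xrightarrow{\sim} \Im(i_B)$ follow at once.
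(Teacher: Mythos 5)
Your proof is correct and follows essentially the same route as the paper: the paper's own argument is precisely that the submonoids in question are the images of $i_S$ and $i_B$, which are injective because the induced actions from Proposition \ref{thm:SBparts} are faithful (resp.\ free, hence faithful). Your additional remarks on well-definedness of $i_S$, $i_B$ and on freeness implying faithfulness are correct elaborations of details the paper leaves implicit.
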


\begin{proof}
The submonoids in question are images of $i_S$ and $i_B,$ which are monoid injections according to the preceding proposition.
\end{proof}

\medskip

It is now time for some remarks on a generalization of the Dehornoy order to $FVS_1.$ This partial order is defined by
$$a = c \lhd b \;\Longrightarrow \; b \prec a.
$$
Devirtualizing and using the acyclicity of the Dehornoy order on $FS_1$ (c.f. \cite{Dehornoy}), one sees that only one of the relations $a=b, b \prec a, a \prec b$ can hold for given $a,b \in FVS_1.$ In particular, the order $<$ generated by $\prec$ is acyclic. 

The order $<$ is unfortunately far from being total: the $x_i$'s are all minimal elements (since $b < a$ entails $l(a)>0,$ whereas $l(x_i)=0$) hence mutually incomparable.

The author knows no reasonable total order either on $FVS_1$ or on $VB_n^+.$ Note that one can not hope for a left- or right-invariant order on $VB_n^+$ since it has torsion ($\zeta_i^2=1$).

\medskip

The last result concerns the case $n=2.$

\begin{proposition}
The real action of $VB_2^+$ on $FS_1^{\times 2}$ is faithful.
\end{proposition}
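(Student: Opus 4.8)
The plan is to exploit the very simple structure of $VB_2^+$: for $n=2$ every commutation and Yang--Baxter relation in Definition \ref{def:VB} is vacuous (the indices $i,i+1$ or $|i-j|>1$ never occur), so the only surviving relation is $\zeta_1^2=1$. Writing $\sigma:=\sigma_1$ and $\zeta:=\zeta_1$, this identifies $VB_2^+$ with the free product of monoids $\NN\ast(\ZZ/2\ZZ)$, in which each element has a unique normal form $\sigma^{a_0}\zeta\sigma^{a_1}\zeta\cdots\zeta\sigma^{a_k}$ with $a_0,a_k\ge 0$ and $a_1,\dots,a_{k-1}\ge 1$ (no two $\zeta$'s adjacent). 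Faithfulness of the real action thus reduces to recovering this normal form from the map $(a,b)\mapsto\theta(a,b)$ on $FS_1^{\times 2}$.

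First I would record the elementary recursion governing the action. Writing $\theta=g_m\circ\cdots\circ g_1$ and tracking the states $(p_j,q_j)$ produced from a chosen pair $(p_0,q_0)=(a,b)$, one checks that $p_{j+1}=q_j$ always, while for $j\ge 1$ one has $q_{j+1}=q_{j-1}$ if $g_{j+1}=\zeta$ and $q_{j+1}=q_{j-1}\lhd q_j$ if $g_{j+1}=\sigma$; the observed output is the pair $(q_{m-1},q_m)$. I would start from an \emph{asymmetric} pair, e.g.\ $a=x$ and $b=x\lhd x$, so that $l(a)\neq l(b)$ and no $\zeta$ acts as the identity at the first step (the diagonal pair $(x,x)$ is useless, since already $\zeta(x,x)=(x,x)=\mathrm{id}(x,x)$). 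The strategy is then to \emph{peel off generators starting from the last one applied}: given the current pair $(P,Q)=(q_{j-1},q_j)$, one must decide whether $g_j$ was $\zeta$ or $\sigma$ and recover the previous state, namely $(Q,P)$ in the $\zeta$-case and $(z,P)$ in the $\sigma$-case, where $Q=z\lhd P$. Here the length function $l$, well defined on $FS_1$ by the Lemma preceding Proposition \ref{thm:VActionFor}, tracks the number of $\sigma$'s peeled exactly as in the proof of that proposition (each $\sigma$ raises the total length by one, each $\zeta$ preserves it), and the defining implication of the order, $Q=z\lhd P\Rightarrow P\prec Q$, furnishes the decomposition test.

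The hard part is the disambiguation at each peeling step, and this is precisely where the genuine difficulty of free self-distributive structures enters. A priori the $\zeta$-value $Q=q_{j-1}$ could itself happen to be of the form $z\lhd P$, thereby masquerading as a $\sigma$-step; worse, because of \eqref{eqn:SelfDistr} one element of $FS_1$ admits \emph{several} decompositions $z\lhd w$ with different right factors $w$, so neither the factor $z$ nor the operation is visibly unique. Note also that this obstruction is unavoidable by soft means: every evaluation of $FS_1$ into a rack or quandle collapses the dependence on the second argument (a monogenerated quandle is trivial, and $FR_1\cong CR$ forgets $b$ entirely), so the distinction genuinely lives in the non-rack free-shelf structure. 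To close the gap I would invoke Dehornoy's results on free shelves (\cite{Dehornoy}): the acyclicity of $\prec$ and the resulting total order on $FS_1$ already recalled in the proof of Proposition \ref{thm:FSActsFreely}, together with the cancellativity properties of the free LD-system, which guarantee that whenever $Q=z\lhd P$ the relevant factor is determined and that the length/order data of the chosen asymmetric pair prevent a $\zeta$-value from imitating a $\sigma$-value. Granting this, the peeling terminates after $m$ steps and reconstructs the full generator sequence, hence the normal form; distinct elements of $VB_2^+$ therefore act differently, and the action is faithful. Alternatively, instead of reconstructing a single word one may argue pairwise: given two normal forms with equal action, apply both to an asymmetric pair and compare the two output coordinates directly via the total order on $FS_1$, which again isolates the same structural input as the only nontrivial ingredient.
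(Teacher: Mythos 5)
Your proposal follows essentially the same route as the paper's proof: reduce to the normal form $\zeta^{\varepsilon_k}\sigma\cdots\sigma\zeta^{\varepsilon_1}\sigma\zeta^{\varepsilon_0}$ and peel off generators starting from the last one applied, using the acyclicity of the Dehornoy order to decide $\zeta$ versus $\sigma$ at each step and the right cancellativity of $FS_1$ to recover the previous state. The step you ``grant'' is precisely the paper's explicit trichotomy --- after a $\sigma$ the state $(p,q)$ satisfies $p\prec q$, hence after a subsequent $\zeta$ (or after a $\zeta$ applied to your asymmetric initial pair) it satisfies $q\prec p$, and acyclicity makes these cases mutually exclusive --- while your choice of the initial pair $(x,x\lhd x)$ in place of $(x,x)$ is a workable minor variant that detects the rightmost $\zeta$ directly instead of via $\for(\theta)$ as in the paper.
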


\begin{proof}
Put $\zeta:=\zeta_1, \sigma:=\sigma_1.$
An element $\theta$ of $VB_2^+$ can be uniquely written, after applying $\zeta^2=1$ several times, in its shortest form $\theta= \zeta^{\varepsilon_k}\sigma \cdots  \sigma \zeta^{\varepsilon_1}\sigma \zeta^{\varepsilon_0},$ where $\varepsilon_i \in \{0,1\}.$ 

We first prove that the value of $$(a,b):=\theta(x,x)\in FS_1^{\times 2}$$ allows one to determine whether $k>0$ and, if so, to calculate $\varepsilon_k$ and  $$(a',b'):=\theta'(x,x)\in FS_1^{\times 2},$$ where $\theta = \zeta^{\varepsilon_k}\sigma \theta'.$ Indeed, consider three possibilities and their consequences:
\begin{enumerate}
\item $k=0 \; \Longrightarrow \;(a,b) = (x,x)$;
\item $k>0, \varepsilon_k=0 \; \Longrightarrow \; a \prec b$;
\item $k>0, \varepsilon_k=1 \; \Longrightarrow \; b \prec a.$
\end{enumerate}
Here $\prec$ is the partial Dehornoy order.
The acyclicity of the Dehornoy order proves that only one of the relations $a=b, b \prec a, a \prec b$ can hold. Thus the pair $(a,b)$ tells whether $k>0$ and, if so, calculates $\varepsilon_k.$ To determine $(a',b'),$ recall the right cancellativity of $FS_1$ (i.e. $a \lhd b = a' \lhd b \:\Rightarrow \: a = a',$ c.f. \cite{Dehornoy}). Thus, for instance, relation $a \prec b$ implies that there exists a unique $c \in FS_1$ with $b=c\lhd a,$ so $(a',b') = (c,a).$ Case $b \prec a$ is similar.

Proceeding by induction, one gets the value of $k$ and all the $\varepsilon_i$'s for $i>0.$ This determines $\theta$ up to the rightmost $\zeta.$ To conclude, observe that the presence or absence of this rightmost $\zeta$ determines $\for(\theta)\in S_2 = \{\Id,\zeta\}$ which, according to proposition \ref{thm:VActionFor}, can be read from the real action. 
\end{proof}

In fact we have proved a more precise property: $VB_2^+$ acts freely on couples of the form $(a,b) \in FS_1^{\times 2}$ with distinct $a$ and $b$ which are not directly comparable (i.e. one has neither $b \prec a$ nor $a \prec b$). An example of such $a$ and $b$ is given by $x$ and $(x\lhd x)\lhd x.$

Devirtualizing, one gets
\begin{corollary}
The virtual action of $VB_2^+$ on $FVS_1^{\times 2}$ is faithful.
\end{corollary}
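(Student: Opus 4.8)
The plan is to deduce faithfulness of the virtual action from that of the real action, just established in the preceding proposition, by pushing every equality down along the \emph{devirtualization} morphism $devirt\colon FVS_1 \simeq FS_{\ZZ} \twoheadrightarrow FS_1,\ x_k \mapsto x$.

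First I would record the key compatibility: applied componentwise, $devirt$ intertwines the virtual $VB_2^+$-action on $FVS_1^{\times 2}$ with the real $VB_2^+$-action on $FS_1^{\times 2}$. For the generator $\sigma_1$ this is immediate, since its virtual and real actions are given by the same formula \eqref{eqn:VBactionOnRacks2}, and the componentwise $devirt$ is a shelf morphism, hence commutes with $\lhd$. For $\zeta_1$ the point is that $devirt$ collapses the automorphism $f$: since $devirt(x_k)=x$ for every $k$, one has $devirt\circ f = devirt = devirt\circ f^{-1}$, so the $f^{\pm1}$-twisted flip \eqref{eqn:VBactionOnVRacks} is carried by $devirt$ to the plain flip \eqref{eqn:VBactionOnRacks}, which is exactly the real action of $\zeta_1$. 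Putting the two generators together, for every $\theta \in VB_2^+$ and every $\overline{a}\in FVS_1^{\times 2}$ one obtains
$$ devirt(\theta\cdot_{virt}\overline{a}) = \theta\cdot_{real} devirt(\overline{a}), $$
where $devirt$ on $FVS_1^{\times 2}$ is understood componentwise.

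Granting this intertwining, the conclusion is formal. Suppose $\theta,\theta' \in VB_2^+$ act identically on $FVS_1^{\times 2}$. Applying $devirt$ to the equality $\theta\cdot_{virt}\overline{a}=\theta'\cdot_{virt}\overline{a}$ and invoking the relation above yields $\theta\cdot_{real} devirt(\overline{a}) = \theta'\cdot_{real} devirt(\overline{a})$ for all $\overline{a}$. Since $devirt$ is surjective, $devirt(\overline{a})$ ranges over all of $FS_1^{\times 2}$, so $\theta$ and $\theta'$ induce the same real action on $FS_1^{\times 2}$; faithfulness of the real action (the preceding proposition) then forces $\theta=\theta'$.

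The only genuinely verification-heavy step is the intertwining relation, and even there the single subtle point is the behaviour of $\zeta_1$: one must check that the $f^{\pm1}$-twists really vanish under $devirt$, which is precisely the identity $devirt\circ f^{\pm1}=devirt$. Everything else is a routine consequence of $devirt$ being a surjective shelf morphism together with the faithfulness already proved for the real action, so I do not expect any serious obstacle here.
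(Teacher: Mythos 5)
Your proof is correct and is exactly the argument the paper intends: the paper's entire proof is the phrase ``Devirtualizing, one gets,'' and you have simply spelled out the intertwining of the virtual and real actions under the surjective shelf morphism $devirt$ (including the key point that $devirt\circ f^{\pm 1}=devirt$ collapses the twisted flip \eqref{eqn:VBactionOnVRacks} to the plain flip) before invoking faithfulness of the real action from the preceding proposition. No gaps.
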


\begin{remark}
The author does not know whether $FVS_1$ is right cancellative. If it were true, the preceding proof could be easily adapted to show that $VB_2^+$ acts freely on couples $(a,b) \in FVS_1^{\times 2}$ which are not directly comparable (i.e. $b \neq f(a)$ and, for all $k \in \ZZ,$ one has neither $b \prec f^k(a)$ nor $a \prec f^k(b)$). An example of such $a$ and $b$ is given by $x_i$ and $x_j$ with $j-i \neq 1.$
\end{remark}

\subsubsection{Free virtual quandles and a conjecture of Manturov}

Let us now turn to free virtual quandles. Developing example \ref{ex:ConjVR}, where $\Conj(F_{n+1})$ was endowed with a virtual quandle structure by taking $f(a):= a \lhd x_{n+1} \; \forall a \in F_{n+1},$ one gets a virtual analogue of the quandle injection \eqref{eqn:FQ}:

\begin{proposition}
The virtual quandle morphism defined on the generators by
\begin{align*}
FVQ_n & {\longrightarrow} \Conj(F_{n+1}),\\
x_i &\longmapsto x_i \qquad \forall 1\le i \le n, \\
\end{align*}
is injective.
\end{proposition}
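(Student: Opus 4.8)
The plan is to reduce the statement to the already-established quandle injection \eqref{eqn:FQ} together with a single group-theoretic freeness fact. First I would invoke the corollary identifying a free virtual quandle with an ordinary free quandle on a larger alphabet: writing $x_{i,k}:=f^k(x_i)$, one has $FVQ_n\cong FQ(\{x_{i,k}: 1\le i\le n,\ k\in\ZZ\})$. Since the stated map fixes each $x_i$ and must intertwine the two virtual automorphisms (on the target $f(a)=a\lhd x_{n+1}=x_{n+1}^{-1}ax_{n+1}$), it is forced on generators to be the map $\Phi$ given by
$$x_{i,k}=f^k(x_i)\longmapsto x_{n+1}^{-k}x_i\, x_{n+1}^{k}\in F_{n+1}.$$
(One checks $\Phi(f(x_{i,k}))=x_{n+1}^{-(k+1)}x_i x_{n+1}^{k+1}=f(\Phi(x_{i,k}))$, confirming $f$-equivariance.)

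Next I would factor $\Phi$ through the conjugation quandle of a free group. The embedding \eqref{eqn:FQ} holds verbatim for an arbitrary alphabet $X$, since its proof uses only the normal form $((x_{i_0}\lhd^{\varepsilon_1}x_{i_1})\lhd^{\varepsilon_2}\cdots)\lhd^{\varepsilon_k}x_{i_k}$ and the identity $a\wlhd b=a\lhd b^{-1}$ in $\Conj(G)$; applied to $X=\{x_{i,k}\}$ it gives an injection $\iota\colon FVQ_n\cong FQ(\{x_{i,k}\})\hookrightarrow\Conj(F(\{x_{i,k}\}))$, where $F(\{x_{i,k}\})$ is the free group on the $x_{i,k}$. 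Let $g\colon F(\{x_{i,k}\})\to F_{n+1}$ be the group homomorphism $x_{i,k}\mapsto x_{n+1}^{-k}x_i x_{n+1}^{k}$. Applying the conjugation-quandle functor (which is the identity on underlying sets) yields a quandle morphism $\Conj(g)$, and on generators $\Conj(g)\circ\iota$ agrees with $\Phi$; since both are quandle morphisms out of a free quandle, they coincide, so $\Phi=\Conj(g)\circ\iota$.

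It then suffices to see that $\Phi$ is a composite of injections. As $\iota$ is injective and $\Conj(g)$ is injective on underlying sets as soon as $g$ is, the only nonformal point is the injectivity of $g$. This is exactly the claim that the conjugates $x_{n+1}^{-k}x_i x_{n+1}^{k}$ form a \emph{free} basis of the subgroup they generate in $F_{n+1}$. I would prove this by Reidemeister--Schreier: that subgroup is the kernel $N$ of the homomorphism $F_{n+1}\twoheadrightarrow\langle x_{n+1}\rangle\cong\ZZ$ killing $x_1,\dots,x_n$. Taking the Schreier transversal $\{x_{n+1}^{k}:k\in\ZZ\}$, the Reidemeister--Schreier generator attached to $(x_{n+1}^k,x_{n+1})$ is trivial, while those attached to $(x_{n+1}^k,x_i)$ with $i\le n$ are precisely the $x_{n+1}^{k}x_i x_{n+1}^{-k}$; by Nielsen--Schreier $N$ is free on exactly these, so $g$ is an isomorphism onto $N$. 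Hence $\Conj(g)$, and therefore $\Phi=\Conj(g)\circ\iota$, is injective.

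The main obstacle is precisely this freeness of the conjugates $x_{n+1}^{-k}x_i x_{n+1}^{k}$; everything else is bookkeeping via the universal property of free (virtual) quandles. I would only take care with the sign/indexing conventions so that the Schreier generators match the direction of the $f^k$-twists, noting that $\{x_{n+1}^{k}x_i x_{n+1}^{-k}\}$ and $\{x_{n+1}^{-k}x_i x_{n+1}^{k}\}$ coincide as $k$ ranges over $\ZZ$, so the conclusion is unaffected.
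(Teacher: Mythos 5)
Your proof is correct, and it fills a genuine gap: the paper states this proposition with no proof at all, presenting it merely as ``a virtual analogue of the quandle injection \eqref{eqn:FQ}.'' Your argument makes precise exactly what that analogy requires. The decomposition --- identify $FVQ_n$ with $FQ(\{x_{i,k}\})$ via the paper's corollary on free virtual SD structures, embed that free quandle into $\Conj$ of the free group on the $x_{i,k}$ by the alphabet-independent version of \eqref{eqn:FQ}, and push forward along the group homomorphism $g\colon x_{i,k}\mapsto x_{n+1}^{-k}x_i x_{n+1}^{k}$ --- is the natural route, and the universal-property bookkeeping (the $f$-equivariance forcing $\Phi$ on the $x_{i,k}$, and the identification $\Phi=\Conj(g)\circ\iota$ by comparing values on free generators) is all sound. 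Crucially, you correctly isolate the one non-formal ingredient that the paper's ``analogue'' framing sweeps under the rug: that the conjugates $x_{n+1}^{-k}x_i x_{n+1}^{k}$ form a free basis of the subgroup they generate, i.e.\ of the kernel of $F_{n+1}\twoheadrightarrow\ZZ$ killing $x_1,\dots,x_n$. Your Reidemeister--Schreier computation with transversal $\{x_{n+1}^{k}\}$ establishes this, and the remark that the two indexings of the conjugates coincide as $k$ ranges over $\ZZ$ disposes of the sign convention. As a bonus, your factorization immediately recovers the paper's follow-up description of the image $VConj_n$ as the set of all conjugates of $x_1,\dots,x_n$ in $F_{n+1}$, since $g$ surjects onto the normal closure of those generators.
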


The image $VConj_n$ of this injection consists of all the conjugates in $\Conj(F_{n+1})$ of the $x_i$'s with $1\le i \le n.$ In particular, $x_{n+1} \in Conj(F_{n+1})$ plays a role different from that of other $x_i$'s: it is not a generator of the virtual quandle $VConj_n,$ but is it here to give the ``virtualizing" morphism $f.$ 

A conjecture raised by Manturov in \cite{ManRecognition} (cf. \cite{ManRecognitionEn} for an English version) is equivalent to the following:

\begin{conj}
The virtual braid group $VB_n$ acts freely on  $(x_1,x_2, \ldots, x_n) \in FVQ_n^{\times n},$ thus generalizing theorem \ref{thm:ActionOnFGn} and  proposition \ref{thm:FQActsFaith}.
\end{conj}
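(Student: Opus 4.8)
The plan is to imitate the proof of proposition~\ref{thm:FQActsFaith}, replacing the classical Artin action of theorem~\ref{thm:ActionOnFGn} by a \emph{virtual} one. First I would transport the action of proposition~\ref{thm:VBActsOnVRacks} across the preceding injection $FVQ_n \hookrightarrow \Conj(F_{n+1})$, under which $a \lhd b = b^{-1}ab$ and $f^{\pm 1}$ become conjugation by $x_{n+1}^{\pm 1}$. On the distinguished tuple one then computes
$$\sigma_i(x_1,\ldots,x_n) = (\ldots,\, x_{i+1},\, x_{i+1}^{-1}x_i x_{i+1},\, \ldots),$$
$$\zeta_i(x_1,\ldots,x_n) = (\ldots,\, x_{n+1}x_{i+1}x_{n+1}^{-1},\, x_{n+1}^{-1}x_i x_{n+1},\, \ldots).$$
This suggests defining a \emph{virtual Artin action} $\Phi\colon VB_n \to \Aut(F_{n+1})$ extending theorem~\ref{thm:ActionOnFGn}: let $\Phi(\sigma_i)$ be the usual Artin automorphism (fixing $x_{n+1}$) and set $\Phi(\zeta_i)\colon x_i \mapsto x_{n+1}x_{i+1}x_{n+1}^{-1}$, $x_{i+1}\mapsto x_{n+1}^{-1}x_i x_{n+1}$, $x_j \mapsto x_j$ for $j\neq i,i+1$, and $x_{n+1}\mapsto x_{n+1}$.

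Second, I would check that $\Phi$ respects the defining relations of $VB_n$ from definition~\ref{def:VB}. The braid relations among the $\sigma_i$'s are classical. A short conjugation computation gives $\Phi(\zeta_i)^2 = \Id$ together with the remaining $S_n$-relations among the $\zeta_i$'s; the genuinely new verifications are the mixed relations \eqref{eqn:BrCommMixed} and \eqref{eqn:BrYBMixed}, which I expect to hold by direct substitution. By construction every generator fixes $x_{n+1}$, so $\Phi(\beta)(x_{n+1}) = x_{n+1}$ for all $\beta \in VB_n$.

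Third, I would establish the bridge identity
$$\alpha(x_1,\ldots,x_n) = \bigl(\Phi(\alpha^{-1})(x_1),\ldots,\Phi(\alpha^{-1})(x_n)\bigr),$$
exactly as in proposition~\ref{thm:FQActsFaith}: it holds on the generators $\sigma_i,\zeta_i$ by the formulas above, and extends to all of $VB_n$ since passing from the left action to $\beta \mapsto (x_j \mapsto \Phi(\beta^{-1})(x_j))$ reverses composition consistently. Consequently, if $\alpha$ stabilizes $(x_1,\ldots,x_n)$ under the quandle action, then $\Phi(\alpha^{-1})$ fixes each of $x_1,\ldots,x_n$; since it also fixes $x_{n+1}$, it fixes every generator of $F_{n+1}$, i.e. $\Phi(\alpha^{-1})=\Id$. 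Hence faithfulness of $\Phi$ would force $\alpha^{-1}=1$, so $\alpha=1$.

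This reduces the conjecture \emph{exactly} to the injectivity of $\Phi$, and that is where the argument stalls: faithfulness of the extended/virtual Artin representation is itself a hard open question, closely tied to Manturov's recognition conjecture, so this route reformulates the problem rather than settling it. To make genuine progress I would attempt injectivity of $\Phi$ directly, bootstrapping from the partial data already available: proposition~\ref{thm:VActionFor} recovers $\for(\theta)\in S_n$, the number of $\sigma_i$'s, and the under-crossing multisets from the action, while the $n=2$ analysis reconstructs the entire word via acyclicity of the Dehornoy order and right cancellativity. The main obstacle is upgrading this word-reconstruction from $n=2$ to arbitrary $n$: one needs an inductive peeling-off of the topmost crossing that stays unambiguous once virtual crossings are present, and it is precisely the interaction of the forbidden Yang--Baxter configurations (remark~\ref{rmk:OtherYB}) with the conjugating factors $x_{n+1}^{\pm 1}$ that blocks a clean induction.
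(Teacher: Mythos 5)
The statement you are asked to prove is labelled a \emph{conjecture} in the paper: it is a reformulation of a recognition conjecture of Manturov, and the paper offers no proof of it --- only a demonstration that it is \emph{equivalent} to Manturov's original formulation, via the injection $FVQ_n \hookrightarrow \Conj(F_{n+1})$ with $f(a) = a \lhd x_{n+1}$ and the virtual shelf isomorphism $E = \sqcup_{i=1}^{n} E_i \stackrel{\sim}{\longrightarrow} VConj_n$, $a \mapsto a^{-1}x_i a$. Your construction of the ``virtual Artin action'' $\Phi\colon VB_n \to \Aut(F_{n+1})$, obtained by transporting the action of proposition \ref{thm:VBActsOnVRacks} through that injection, together with the bridge identity $\alpha(x_1,\ldots,x_n) = (\Phi(\alpha^{-1})(x_1),\ldots,\Phi(\alpha^{-1})(x_n))$, is essentially a reconstruction of that same equivalence: you correctly reduce freeness of the $VB_n$-action at the point $(x_1,\ldots,x_n)$ to injectivity of $\Phi$, and your explicit formulas for $\sigma_i$ and $\zeta_i$ (with $f^{\pm 1}$ realized as conjugation by $x_{n+1}^{\mp 1}$, hence $f^{-1}(a)=x_{n+1}ax_{n+1}^{-1}$) match the paper's conventions. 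The logical structure is also right for a \emph{freeness at a point} statement rather than faithfulness of the whole action: a stabilizing $\alpha$ forces $\Phi(\alpha^{-1})$ to fix all $n+1$ generators of $F_{n+1}$, hence to be the identity automorphism.

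However, as you yourself acknowledge, this does not settle the statement: the injectivity of $\Phi$ \emph{is} the conjecture, merely wearing different clothes, and no argument in the paper (nor, to date, in the literature) establishes it. Your honest assessment of where the argument stalls is accurate --- the partial information recovered in proposition \ref{thm:VActionFor} (the image in $S_n$, the number of positive crossings, the under-crossing multisets) and the complete $n=2$ analysis via the Dehornoy order do not yet yield an inductive word-reconstruction for general $n$, precisely because of the interaction with the forbidden Yang--Baxter configurations of remark \ref{rmk:OtherYB}. So the verdict is not that your approach is wrong, but that there is no proof here to compare against, and your proposal is a (correct) reformulation rather than a resolution. One small point to be careful about if you pursue this: when verifying that $\Phi$ respects the mixed relation \eqref{eqn:BrYBMixed}, and when checking whether $\beta \mapsto \Phi(\beta^{-1})$ is a left action or an anti-action, the order-of-composition conventions must be made explicit, since the bridge identity silently converts a left action on tuples into a right action on the free group exactly as in proposition \ref{thm:FQActsFaith}.
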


Manturov formulated his conjecture in terms of cosets 
$$E_i:= \{x_i\} \backslash F_{n+1} = F_{n+1} / (a = x_i a \: \forall a).$$
He endowed $E:=\sqcup_{i=1}^{n} E_i$ with the operations
$$a\ast b := ab^{-1}x_j b \in E_i \qquad \forall a \in E_i, b \in E_j,$$
$$f(a)=a x_{n+1} \in E_i \qquad \forall a \in E_i.$$
To see that Manturov's conjecture is equivalent to the one given above, note that $(E, \ast, f)$ is a virtual shelf and that one has a virtual shelf isomorphism
\begin{align*}
E & \stackrel{\sim}{\longrightarrow} VConj_n,\\
a \in E_i &\longmapsto a^{-1}x_i a.
\end{align*}

\subsection{Patch 3, a symmetric one: a categorical counterpart of virtual braids}

Now let us look for a {categorical} counterpart -- in the sense of theorem \ref{thm:AlgCat} -- of the notion of virtual braids. Interesting results in this direction were obtained by Kauffman and Lambropoulou in \cite{KauffLambr}. They introduced the String Category and explored its tight relationship with, on the one hand, the algebraic Yang-Baxter equation, and, on the other hand, virtual braid groups. Morally, passing from the usual to the algebraic YB equation requires, besides a braiding, a (substitute for the) flip, thus suggesting connections with virtual braid groups. This point of view turns out to be quite fruitful, in particular when working with pure braid groups. In this section we present our categorification of $VB_n$ which is quite different from the one from \cite{KauffLambr}. It is closer in spirit to the categorification of $B_n,$ and it produces a handy machine for constructing representations of $VB_n$'s. Examples of such representations will be given in further sections.

Two types of braiding present in the definition of $VB_n$ suggest looking at categories with two distinct braided structures. But this approach is too naive to work: the naturality of the braidings would imply that one can pass any braiding ``through" the other one, meaning one of the forbidden YB relations (cf. remark \ref{rmk:OtherYB}). Thus one needs an adequate non-functorial notion of braiding.

\begin{definition}
An object $V$ in a monoidal category $\C$ is called \emph{braided} if it is endowed with a \emph{specific braiding}, i.e. an invertible morphism
 $$\sigma=\sigma_{V} : V\otimes V \rightarrow V \otimes V,$$
satisfying a categorified version of \eqref{eqn:BrYB}:
\begin{equation}
(\sigma_{V}\otimes \Id_V)\circ(\Id_V \otimes \sigma_{V})\circ(\sigma_{V}\otimes \Id_V) =(Id_V \otimes \sigma_{V}
)\circ(\sigma_{V}\otimes \Id_V)\circ(\Id_V \otimes \sigma_{V}). \label{eqn:YB}\tag{YB}
\end{equation}
\end{definition}

For every object $V$ in a braided category, $c_{V,V}$ is automatically a specific braiding, since the Yang-Baxter equality for $c_{V,V}$ follows from its naturality (take $W=V \otimes V$ and $g=c_{V,V}$ in the condition \eqref{eqn:nat} expressing naturality). But the most interesting case for us is when $V$ has, in addition to $c_{V,V},$ another specific braiding $\sigma_{V}.$ 

\begin{theorem}\label{thm:AlgCatVirtual}
Denote by $\CCbr$ the free symmetric category generated by a single braided object $(V,\sigma_{V}).$ Then for each $n$ one has a group isomorphism
\begin{align}
\psi_{virt}: VB_n &\stackrel{\sim}{\longrightarrow} \End_{\CCbr}(V^{\otimes n})\notag \\
\zeta_i &\longmapsto c_i := \Id_V^{\otimes (i-1)}\otimes c_{V,V} \otimes \Id_V^{\otimes (n-i-1)},\label{eqn:vbr_grp_cat}\\
\sigma_i^{\pm 1} &\longmapsto \sigma_i^{\pm 1} :=\Id_V^{\otimes (i-1)}\otimes \sigma_{V}^{\pm 1} \otimes \Id_V^{\otimes (n-i-1)}.\label{eqn:vbr_grp_cat2}
\end{align}
\end{theorem}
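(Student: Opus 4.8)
The plan is to follow the template of Theorem~\ref{thm:AlgCat}, establishing separately that $\psi_{virt}$ is a well-defined group homomorphism, that it is surjective, and that it is injective; only the last point is substantial, and I would prove it by exhibiting an explicit model of $\CCbr$ whose endomorphism monoids are the $VB_n$ by construction. For well-definedness one checks that the images of the generators satisfy the relations of Definition~\ref{def:VB}. Relations \eqref{eqn:BrComm} and \eqref{eqn:BrYB} for the $\sigma_i$ hold because $\sigma_i,\sigma_j$ act on disjoint tensor factors when $|i-j|>1$, and because $\sigma_{V}$ satisfies \eqref{eqn:YB} by the definition of a braided object. The same relations for the $\zeta_i=c_i$, together with \eqref{eqn:Symm}, follow since $c_{V,V}$ is a specific braiding (its Yang--Baxter identity comes from naturality, as observed after Theorem~\ref{thm:AlgCat}) and since \eqref{eqn:symm_cat} gives $c_{V,V}\circ c_{V,V}=\Id$. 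The mixed commutation \eqref{eqn:BrCommMixed} is again a disjoint-support statement, while the crucial relation \eqref{eqn:BrYBMixed} is, after decomposing $c_{V,V\otimes V}=(\Id_V\otimes c_{V,V})\circ(c_{V,V}\otimes\Id_V)$ via \eqref{eqn:br_cat}, a form of the naturality \eqref{eqn:nat} of $c$ applied to the morphism $\sigma_{V}\colon V\otimes V\to V\otimes V$, i.e. the statement that $\sigma_{V}$ may be slid along the symmetric braiding. This viewpoint also explains Remark~\ref{rmk:OtherYB}: sliding $c$ past $c$ yields the two further admissible relations, whereas the forbidden relations would require sliding $c$ along $\sigma_{V}$, for which no naturality is available since $\sigma_{V}$ is not a natural transformation.

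Surjectivity is immediate from the description of morphisms in a free symmetric category on one generating object carrying one generating invertible morphism $\sigma_{V}$: by strictness every endomorphism of $V^{\otimes n}$ is a composite of elementary pieces $\Id_V^{\otimes a}\otimes(-)\otimes\Id_V^{\otimes b}$, where $(-)$ is either $\sigma_{V}^{\pm 1}$ or a structural symmetry, and the latter decomposes into copies of $c_{V,V}$; hence every such endomorphism lies in the subgroup generated by the $c_i$ and $\sigma_i^{\pm 1}$, i.e. in the image of $\psi_{virt}$.

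For injectivity I would build a candidate symmetric category $\mathcal K$ and invoke uniqueness of free objects. Let $\mathcal K$ have objects $\NN$ (with $n$ standing for $V^{\otimes n}$), no morphisms between distinct objects, and $\End_{\mathcal K}(n):=VB_n$ with composition given by the group law; the tensor product is addition on objects and, on morphisms, the index-shifting homomorphisms $VB_m\times VB_n\to VB_{m+n}$, while the braiding $c_{m,n}\in VB_{m+n}$ is the block transposition built solely from the $\zeta$'s (the image of the corresponding permutation under $S_{m+n}\hookrightarrow VB_{m+n}$). The pair $(1,\sigma_1)$ is then a braided object of $\mathcal K$. One checks that $\mathcal K$ is strict monoidal and symmetric, and that it enjoys the universal property of the free symmetric category on a braided object: for any symmetric $\D$ and any braided $(W,\sigma_W)\in\D$, the assignment sending $1\mapsto W$, $\sigma_1\mapsto\sigma_W$, and each $\zeta_i,\sigma_i$ to the evident $i$-th copies of $c_{W,W}$ and $\sigma_W$ extends, uniquely, to a symmetric monoidal functor $\mathcal K\to\D$. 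Well-definedness of this functor on each $VB_n$ is exactly the relation check of the first step, now carried out inside $\D$; uniqueness holds since the named morphisms generate. By uniqueness of free objects, $\mathcal K\simeq\CCbr$ as symmetric categories matching braided object to braided object, whence $\End_{\CCbr}(V^{\otimes n})\cong\End_{\mathcal K}(n)=VB_n$, and chasing the identifications shows this isomorphism is $\psi_{virt}$.

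The main obstacle is the verification that $\mathcal K$ really is symmetric monoidal: one must show that the block braidings $c_{m,n}$ assembled from the $\zeta_i$ satisfy the hexagon identities \eqref{eqn:br_cat}--\eqref{eqn:br_cat2}, the symmetry \eqref{eqn:symm_cat}, and — most delicately — naturality \eqref{eqn:nat} with respect to every morphism of $\mathcal K$, including those built from the $\sigma_i$. This naturality is precisely what the mixed relations \eqref{eqn:BrCommMixed} and \eqref{eqn:BrYBMixed} encode, so the verification reduces to a finite, if somewhat tedious, manipulation of words in $VB_n$ using all the defining relations. The remaining coherence checks are routine, since the symmetric structure on $\mathcal K$ is supported entirely on the $S_n$-part, where the statements are classical.
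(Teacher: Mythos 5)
Your first two steps (well-definedness of $\psi_{virt}$ via the three admissible Yang--Baxter instances, with the mixed relation \eqref{eqn:BrYBMixed} obtained from the naturality \eqref{eqn:nat} of $c$ applied to $\sigma_{V}$, and surjectivity via the decomposition \eqref{eqn:br_powers} of $c_{V^{\otimes n},V^{\otimes k}}$ into the $c_i$'s) coincide with points 1 and 2 of the paper's proof. Where you genuinely diverge is the injectivity step. The paper works directly on the presented free object: it enumerates the structural constraints that define $\CCbr$ --- the identities \eqref{eqn:br_cat}--\eqref{eqn:br_cat2}, the symmetry \eqref{eqn:symm_cat}, and the naturality of $c$ --- and checks that, restricted to $\End_{\CCbr}(V^{\otimes n})$, each is either automatic from \eqref{eqn:br_powers} or reduces, on the generating morphisms $c_i$ and $\sigma_i$, to the image of a relation of $VB_n$; hence no collapsing beyond $VB_n$ can occur. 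You instead construct the explicit model $\mathcal K$ with $\End_{\mathcal K}(n)=VB_n$, verify that it is a symmetric strict monoidal category (correctly isolating the delicate point: naturality of the block transpositions against the $\sigma_i$'s, which is exactly what the mixed relations encode), establish its universal property, and invoke uniqueness of free objects. The two arguments are mirror images resting on the same elementary computations; yours is the more self-contained in that it does not presuppose a presentation of the free symmetric category on a braided object, at the price of the additional routine verifications that $\mathcal K$ is well defined (interchange law for the index-shifting homomorphisms $VB_m\times VB_n\rightarrow VB_{m+n}$, the hexagon identities for the block transpositions, and the identification of $\psi_{virt}$ with the isomorphism produced by the universal property). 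Both routes are correct.
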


Notation $\CCbr$ emphasizes that two different braidings are present in the story.

\begin{proof}
\begin{enumerate}
\item \label{item1} To check that $\psi_{virt}$ is well defined, one should check three instances of Yang-Baxter equation in $\CCbr,$ the other verifications being trivial. Yang-Baxter equation for $\sigma_{V}$ holds by definition. That for $c_{V,V}$ was proved above. The remaining one, with one occurrence of $\sigma_{V}$ and two of $c_{V,V},$ is a consequence of the naturality of $c$: take $W=V \otimes V$ and $g=\sigma_{V}$ in \eqref{eqn:nat}.
\item \label{item2} To see that the $c_i$'s and $\sigma_i$'s generate the whole $\End_{\CCbr}(V^{\otimes n}),$ remark that the braiding $c$ on tensor powers of $V,$ which is the only part of the structure not described yet, is automatically expressed, due to \eqref{eqn:br_cat} and \eqref{eqn:br_cat2}, via $c_i$'s:
\begin{equation}\label{eqn:br_powers}
c_{V^{\otimes n},V^{\otimes k}}= (c_k\cdots c_1)\cdots(c_{n+k-2}\cdots c_{n-1})(c_{n+k-1} \cdots c_{n}),
\end{equation}
and similarly for $c_{V^{\otimes n},V^{\otimes k}}^{-1}.$ 
\item It remains to show that all relations in $\End_{\CCbr}(V^{\otimes n})$ follow from those which are images by $\psi_{virt}$ of relations from $VB_n$'s.

Equations \eqref{eqn:br_cat} and \eqref{eqn:br_cat2} for $c$ on tensor powers of $V$ are guaranteed by \eqref{eqn:br_powers}. So is the idempotence of $c_{V^{\otimes n},V^{\otimes k}}.$ Naturality of $c_{V^{\otimes n},V^{\otimes k}}$ is the only condition left. According to point \ref{item2}, it suffices to check it for generating morphisms $c_i$'s and $\sigma_i$'s only. But then everything follows from appropriate versions of Yang-Baxter equation discussed in point \ref{item1}.
\end{enumerate}
\end{proof}

Thus virtual braid groups describe hom-sets of a free symmetric category generated by a single braided object.

\begin{corollary}\label{crl:AlgCatVirtual}
For any braided object $V$ in a symmetric category $\C,$ the map defined by formulas \eqref{eqn:vbr_grp_cat} and \eqref{eqn:vbr_grp_cat2} endow $V^{\otimes n}$ with an action of the group $VB_n.$ 
\end{corollary}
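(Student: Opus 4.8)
The plan is to deduce the corollary from Theorem~\ref{thm:AlgCatVirtual} by pure functoriality, invoking the \emph{freeness} of $\CCbr$ rather than re-checking the braid relations by hand in $\C$. The key observation is that a braided object $(V,\sigma_V)$ in a symmetric category $\C$ is precisely the datum needed to map out of $\CCbr$: by the universal property defining $\CCbr$ as the free symmetric category on a single braided object, there is a unique strict symmetric monoidal functor
\[
F\colon \CCbr \longrightarrow \C
\]
sending the generating braided object to $V$ and its specific braiding to $\sigma_V$.

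First I would record what $F$ does to the morphisms of interest. Since $F$ is symmetric monoidal, it carries the global braiding $c$ of $\CCbr$ to the global braiding $c$ of $\C$ and sends $V^{\otimes n}$ to $V^{\otimes n}$; combined with $F(\sigma_V)=\sigma_V$, this shows that the generators $c_i$ and $\sigma_i^{\pm1}$ of $\End_{\CCbr}(V^{\otimes n})$ appearing in \eqref{eqn:vbr_grp_cat} and \eqref{eqn:vbr_grp_cat2} are mapped by $F$ to the endomorphisms of $V^{\otimes n}$ in $\C$ given by the very same formulas. Applying $F$ to endomorphisms yields a monoid homomorphism $\End_{\CCbr}(V^{\otimes n}) \to \End_{\C}(V^{\otimes n})$.

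Next I would form the composite
\[
VB_n \xrightarrow{\ \psi_{virt}\ } \End_{\CCbr}(V^{\otimes n}) \xrightarrow{\ F\ } \End_{\C}(V^{\otimes n}),
\]
a homomorphism from a group into a monoid. By the tracking above, this composite is exactly the map prescribed by \eqref{eqn:vbr_grp_cat} and \eqref{eqn:vbr_grp_cat2} in $\C$; hence that map automatically respects all defining relations of $VB_n$, and since every element of the group $VB_n$ is invertible while $F$ (being a functor) preserves isomorphisms, each generator's image lands among the invertible endomorphisms of $V^{\otimes n}$. This is precisely an action of the group $VB_n$ on $V^{\otimes n}$.

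The one point deserving care — and which I expect to be the main conceptual obstacle — is justifying the existence and uniqueness of $F$, i.e. spelling out the universal property of $\CCbr$ as a free object in the appropriate $2$-category of symmetric categories equipped with a distinguished braided object. Here one must stress that the specific braiding $\sigma_V$ is genuine extra structure, not derivable from the symmetric braiding $c_{V,V}$, so that ``braided object in $\C$'' is exactly the type of algebraic data classified by symmetric monoidal functors out of $\CCbr$. Granting this, all verification of braid relations is already subsumed in the well-definedness of $\psi_{virt}$ (established in the proof of Theorem~\ref{thm:AlgCatVirtual}), and nothing further needs to be checked.
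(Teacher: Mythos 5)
Your proof is correct and follows exactly the route the paper intends: the corollary is stated as an immediate consequence of Theorem \ref{thm:AlgCatVirtual}, the implicit argument being precisely the universal property of the free symmetric category $\CCbr$ on one braided object, which yields the symmetric monoidal functor $F$ you construct and hence the composite $F\circ\psi_{virt}$. Your explicit tracking of generators and the remark on invertibility are sound elaborations of what the paper leaves unsaid.
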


Recall the notation \eqref{eqn:phi_i}.
\begin{definition}\label{def:AlgCatVirtual}
Given an object $V$ in a monoidal category $\C,$ we briefly say that a pair  $ (\xi,\vartheta)$ of endomorphisms of $V\otimes V$ \emph{defines a $VB_n$ (or $VB_n^+$) action} if the assignments $\zeta_i \mapsto \xi_i$ and $\sigma_j \mapsto \vartheta_j$ define  an action of $VB_n$ (or $VB_n^+$) on $V^{\otimes n}.$
\end{definition}
The above corollary says for instance that $(c_{V,V},\sigma_V)$ defines a $VB_n$ action.

Thus one has a machine for constructing representations of $VB_n.$ Section \ref{sec:examples} is devoted to concrete examples.

\begin{remark}
Observe that it is somewhat easier to check that one has a symmetric category with a braided object than to verify directly that one actually has an action of $VB_n.$ It comes from the fact that some of the relations in $VB_n$ are already ``built-in" on the categorical level:
\begin{itemize}
\item commutation relations \eqref{eqn:BrComm} and \eqref{eqn:BrCommMixed} are ``hidden" in the definition of the action
$\psi_{virt}$;
\item Yang-Baxter equations \eqref{eqn:BrYBMixed} and \eqref{eqn:BrYB} for $\zeta_i$'s are consequences of the naturality of the braiding $c.$
\end{itemize} 
\end{remark}

\begin{remark}\label{rmk:local}
The idea of working with ``local" braidings on $V$ instead of demanding the whole category $\C$ to be ``globally" braided is similar to what is done in \cite{Goyvaerts}, where self-invertible YB operators are considered in order to define YB-Lie algebras in an additive monoidal category $\C.$ Note also that a specific braiding $\sigma_V$ on $V\in \Ob(\C)$ ``globalizes" to a usual braiding on the monoidal subcategory of $\C$ generated by the object $V$ and the morphism $\sigma_V,$ thanks to a formula analogous to \eqref{eqn:br_powers}. In particular, a free symmetric category generated by a single braided object is isomorphic to $\CCbr$: one needs more structure for the difference between specific and usual ``global" braidings to be perceptible.
\end{remark}

\medskip

Positive virtual braid monoids can be categorified similarly:
\begin{definition}
An object $V$ in a monoidal category $\C$ is called \emph{weakly braided} if it is endowed with a \emph{weak braiding}
 $$\sigma=\sigma_{V} : V\otimes V \rightarrow V \otimes V,$$
satisfying \eqref{eqn:YB}.
\end{definition}

Note that one no longer demands $\sigma_{V}$ to be invertible.

A braided object is automatically weakly braided. Some interesting non-invertible braidings will be presented in section \ref{sec:examples}.
Although very natural, this weak notion of braiding seems to be unexplored.

We finish this section by an obvious generalization of theorem \ref{thm:AlgCatVirtual}.

\begin{theorem}\label{thm:AlgCatVirtualWeak}
Denote by $\CCbr^+$ the free symmetric category generated by a single weakly braided object $(V,\sigma_{V}).$ Then for each $n$ one has a monoid isomorphism
\begin{align}
\psi_{virt, weak}: VB_n^+ &\stackrel{\sim}{\longrightarrow} \End_{\CCbr^+}(V^{\otimes n})\notag \\
\zeta_i &\longmapsto  \Id_V^{\otimes (i-1)}\otimes c_{V,V} \otimes \Id_V^{\otimes (n-i-1)},\notag\\
\sigma_i &\longmapsto \Id_V^{\otimes (i-1)}\otimes \sigma_{V} \otimes \Id_V^{\otimes (n-i-1)}.\notag
\end{align}
\end{theorem}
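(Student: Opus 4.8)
The plan is to imitate the three-step proof of Theorem \ref{thm:AlgCatVirtual} almost verbatim, checking at each step that the invertibility of $\sigma_V$ was used there only to accommodate the generators $\sigma_i^{-1}$, which are simply absent from the positive monoid $VB_n^+$. Since $VB_n^+$ is presented by the same generators $\sigma_i,\zeta_i$ and the same relations as $VB_n$ (minus the inverses), and $\CCbr^+$ differs from $\CCbr$ only in dropping the invertibility requirement on the generating braiding, the whole argument transports with essentially no change.

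First I would verify that $\psi_{virt,weak}$ is well defined, i.e.\ that it respects the defining relations of $VB_n^+$. The commutation relations \eqref{eqn:BrComm} and \eqref{eqn:BrCommMixed}, together with the relation \eqref{eqn:Symm} for the $\zeta_i$'s, hold automatically from the disjoint supports of the assignments and the symmetry of $c_{V,V}$, exactly as in the group case. The three Yang-Baxter relations reduce to the same three checks as in the first point of the proof of Theorem \ref{thm:AlgCatVirtual}: equation \eqref{eqn:YB} for $\sigma_V$ now holds by the \emph{weak} braiding axiom; the relation \eqref{eqn:BrYB} for the $\zeta_i$'s follows from naturality \eqref{eqn:nat} of $c$; and the mixed relation \eqref{eqn:BrYBMixed} follows from naturality of $c$ applied with $W=V\otimes V$ and $g=\sigma_V$. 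Crucially, none of these computations inverts $\sigma_V$.

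Next I would show that the $c_i$'s and $\sigma_i$'s generate all of $\End_{\CCbr^+}(V^{\otimes n})$. The only piece of the symmetric structure not yet accounted for is the global braiding $c$ on tensor powers of $V$, and formula \eqref{eqn:br_powers} expresses each $c_{V^{\otimes n},V^{\otimes k}}$ through the generators $c_i$. Because $\CCbr^+$ is symmetric, $c_{V,V}$ is its own inverse, so the inverse global braidings are likewise generated by the $c_i$'s without ever invoking $\sigma_V^{-1}$. This is precisely where the positive setting is \emph{cleaner} than the original: the free symmetric category on a merely weakly braided object contains no morphism built from an inverse of $\sigma_V$, so there is strictly less to generate.

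Finally, as in the third point of the earlier proof, I would check that every relation in $\End_{\CCbr^+}(V^{\otimes n})$ is a consequence of images of $VB_n^+$ relations. Equations \eqref{eqn:br_cat}, \eqref{eqn:br_cat2} and the idempotence of $c$ on tensor powers are guaranteed by \eqref{eqn:br_powers}, and the last remaining condition, naturality of $c$ on tensor powers, reduces by the previous step to the generating morphisms $c_i$ and $\sigma_i$, for which it follows from the Yang-Baxter relations verified in the first step. The only point demanding genuine care is the bookkeeping that confirms invertibility of $\sigma_V$ entered the original argument solely through the generators $\sigma_i^{-1}$; once that is settled, the monoid isomorphism $\psi_{virt,weak}$ is obtained from the group isomorphism $\psi_{virt}$ by discarding the inverse generators on both sides.
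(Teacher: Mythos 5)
Your proposal is correct and is exactly the argument the paper intends: Theorem \ref{thm:AlgCatVirtualWeak} is presented there as an ``obvious generalization'' of Theorem \ref{thm:AlgCatVirtual} with no separate proof, the point being precisely that invertibility of $\sigma_V$ enters the three-step argument only through the generators $\sigma_i^{-1}$, which are absent from $VB_n^+$. Your step-by-step bookkeeping confirming this is the same route, just written out explicitly.
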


\subsection{Filling patch 3: ``structural" braidings}\label{sec:examples}

This section is devoted to concrete examples, classical and original, of (weakly) braided objects in symmetric categories. These examples can be considered within the philosophy of our previous paper \cite{Lebed1}. In that paper we constructed, for simple algebraic structures (e.g. an algebra), braidings involving the defining morphisms of these structures (e.g. the multiplication in the case of an algebra). Those braidings encode the structures, i.e. they satisfy \eqref{eqn:YB} if and only if the defining morphisms satisfy the defining relations of these structures (e.g. associativity for an algebra). While the purpose of \cite{Lebed1} was to recover basic homologies of algebraic structures as braided space homologies of the constructed braidings, here we will discover an independent interest of such braidings in the virtual world.

\subsubsection{Shelves and racks revisited}\label{sec:racks2}

Take a set $S$ endowed with a binary operation $\lhd:S\times S \rightarrow S.$ Consider the application 
\begin{align}\label{eqn:RackBraid}
 \sigma = \sigma_{S,\lhd} :  S \times S & \longrightarrow S \times S, \notag\\
 (a,b) & \longmapsto (b,a \lhd b).\tag{RackBraid}
\end{align} 

The following result is well-known:
\begin{lemma}\label{thm:shelf} 
The map $\sigma_{S,\lhd}$ constructed out of a binary operation $\lhd$ on $S$ is:
\begin{enumerate}
\item a weak braiding on $S$ if and only if $(S,\lhd)$ is a shelf;
\item a specific braiding on $S$ if and only if $(S,\lhd)$ is a rack.
\end{enumerate}
\end{lemma}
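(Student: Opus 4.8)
The plan is to work concretely in $\Set$, where $\otimes=\times$ and the map is $\sigma(a,b)=(b,a\lhd b)$, and to evaluate both sides of the Yang--Baxter equation \eqref{eqn:YB} on a generic triple $(a,b,c)\in S\times S\times S$. Writing $\sigma_1=\sigma\times\Id_S$ and $\sigma_2=\Id_S\times\sigma$, the equation \eqref{eqn:YB} reads $\sigma_1\sigma_2\sigma_1=\sigma_2\sigma_1\sigma_2$, and a direct substitution gives
$$\sigma_1\sigma_2\sigma_1(a,b,c)=(c,\; b\lhd c,\; (a\lhd b)\lhd c),$$
$$\sigma_2\sigma_1\sigma_2(a,b,c)=(c,\; b\lhd c,\; (a\lhd c)\lhd(b\lhd c)).$$
The first two coordinates coincide automatically, so the two sides agree for every $(a,b,c)$ precisely when $(a\lhd b)\lhd c=(a\lhd c)\lhd(b\lhd c)$ for all $a,b,c$, i.e. exactly when \eqref{eqn:SelfDistr} holds. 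Since the displayed formulas are valid for arbitrary arguments, this equivalence is genuinely an ``iff'', which establishes claim (1): $\sigma_{S,\lhd}$ is a weak braiding if and only if $(S,\lhd)$ is a shelf.

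For claim (2) I would, assuming \eqref{eqn:SelfDistr} already holds so that \eqref{eqn:YB} is satisfied, analyse when the set map $\sigma$ is a bijection. Given a target pair $(x,y)$, solving $\sigma(a,b)=(x,y)$ forces $b=x$ and $a\lhd x=y$; hence $\sigma$ is bijective if and only if, for each fixed $x\in S$, the right translation $a\mapsto a\lhd x$ is a bijection of $S$. This is exactly the condition defining a rack, whose inverse operation $\wlhd$ is by definition the inverse of right translation. Concretely I would exhibit the two-sided inverse
$$\sigma^{-1}(x,y)=(y\wlhd x,\; x)$$
and verify $\sigma\sigma^{-1}=\sigma^{-1}\sigma=\Id$ directly from \eqref{eqn:Rack}, using $(y\wlhd x)\lhd x=y$ and $(a\lhd b)\wlhd b=a$. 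Combined with (1), this yields: $\sigma_{S,\lhd}$ is a specific braiding if and only if $(S,\lhd)$ is a rack.

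There is no genuinely hard step here -- the entire content is a single bookkeeping computation -- so the only real care is in correctly translating the categorical notation $\sigma_1,\sigma_2$ into coordinatewise maps, tracking the order of composition, and respecting the right-handed convention of \eqref{eqn:SelfDistr} (so that the last variable $c$ acts last), which is what makes the third coordinates line up as self-distributivity rather than some transposed variant.
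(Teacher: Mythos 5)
Your computation is correct: both sides of \eqref{eqn:YB} evaluate as you state, the first two coordinates match automatically, and the third coordinates agree exactly when \eqref{eqn:SelfDistr} holds, while invertibility of $\sigma$ reduces to bijectivity of the right translations, i.e.\ the rack axiom \eqref{eqn:Rack}. The paper omits the proof entirely (citing the result as well-known), and your direct coordinate check is precisely the standard argument it has in mind, so there is nothing to add.
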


In other words, the Yang-Baxter equation can be coded by self-distributivity in the algebraic representation theory.

\begin{corollary}
 A rack (or a shelf) $S$ is a (weakly) braided object in the symmetric category $\Set.$ 
\end{corollary}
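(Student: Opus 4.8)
The corollary is a direct translation of Lemma \ref{thm:shelf} into the language of braided objects, so the plan is simply to unwind the relevant definitions. Recall that $\Set$ is symmetric with the flip as its global braiding, and that a (weakly) braided object in a monoidal category is by definition an object equipped with a (weak) specific braiding, i.e.\ a morphism $V\otimes V\to V\otimes V$ satisfying \eqref{eqn:YB} (invertible in the non-weak case). Thus, given a shelf $(S,\lhd)$, I would exhibit the map $\sigma_{S,\lhd}$ of \eqref{eqn:RackBraid} as the required structure: by Lemma \ref{thm:shelf}(1) it satisfies \eqref{eqn:YB}, hence $\bigl(S,\sigma_{S,\lhd}\bigr)$ is a weakly braided object in $\Set$. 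If $(S,\lhd)$ is moreover a rack, Lemma \ref{thm:shelf}(2) upgrades $\sigma_{S,\lhd}$ to an invertible specific braiding, so $S$ becomes a genuine braided object. No extra axioms of $\Set$ need be checked, since its symmetric structure was already recorded in the list of examples.

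Since the real content sits in Lemma \ref{thm:shelf} (stated as well-known), I would justify it by a direct evaluation of \eqref{eqn:YB} on a generic triple $(a,b,c)\in S\times S\times S$. Using that $\sigma\otimes\Id$ and $\Id\otimes\sigma$ act by $(a,b,c)\mapsto(b,a\lhd b,c)$ and $(a,b,c)\mapsto(a,c,b\lhd c)$ respectively, a short computation yields
\begin{align*}
(\sigma\otimes\Id)\circ(\Id\otimes\sigma)\circ(\sigma\otimes\Id)(a,b,c)&=\bigl(c,\,b\lhd c,\,(a\lhd b)\lhd c\bigr),\\
(\Id\otimes\sigma)\circ(\sigma\otimes\Id)\circ(\Id\otimes\sigma)(a,b,c)&=\bigl(c,\,b\lhd c,\,(a\lhd c)\lhd(b\lhd c)\bigr).
\end{align*}
The first two coordinates always agree, independently of any hypothesis on $\lhd$, so \eqref{eqn:YB} holds for every triple precisely when $(a\lhd b)\lhd c=(a\lhd c)\lhd(b\lhd c)$, i.e.\ when \eqref{eqn:SelfDistr} is satisfied; this gives part (1). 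For part (2), a specific braiding must in addition be invertible. Since the first output of $\sigma_{S,\lhd}(a,b)=(b,a\lhd b)$ returns $b$, inverting $\sigma_{S,\lhd}$ reduces to solving $a\lhd b=v$ for $a$, which admits a unique solution for every $b$ exactly when each right translation $a\mapsto a\lhd b$ is bijective, i.e.\ when the rack axiom \eqref{eqn:Rack} holds (with inverse given by $\wlhd$).

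There is no serious obstacle here: everything reduces to a finite bookkeeping of coordinates. The only point demanding care is correctly ordering the composites and placing $\sigma$ versus $\Id$ in each factor of \eqref{eqn:YB}; the conceptual shortcut is that the leading two coordinates are forced to coincide, so the whole Yang--Baxter equation collapses onto the single self-distributivity identity in the last slot, and the rack case adds only the separate, purely set-theoretic question of invertibility.
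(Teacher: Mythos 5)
Your proposal is correct and follows exactly the route the paper intends: the corollary is an immediate unwinding of Lemma \ref{thm:shelf} applied to the map \eqref{eqn:RackBraid} in the symmetric category $\Set$, and your direct coordinate computation verifying that \eqref{eqn:YB} collapses to \eqref{eqn:SelfDistr} (with invertibility of $\sigma_{S,\lhd}$ reducing to the rack axiom \eqref{eqn:Rack}) is the standard argument the paper omits as well known. No gaps.
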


Thus corollary \ref{crl:AlgCatVirtual} and its non-invertible version give an action of $VB_n$ (resp. $VB_n^+$) on $S^{\times n}.$ This action turns out to be precisely the one from propositions \ref{thm:VBActsOnRacks} (resp. \ref{thm:VSaction}). 

\medskip
We finish with a concrete example.
\begin{example}\label{ex:AlexBurau}
Take the Alexander quandle \eqref{eqn:Alex}. Recall the faithful forgetful functor $For$ from \eqref{eqn:For}. Observe that the braiding $\sigma_{S,\lhd}$ is linear, hence it can be pulled back to $\kVectSum,$ with $\kk = \ZZ[t^{\pm 1}].$
One recovers a virtual version of the Burau representation \eqref{eqn:Burau}, studied in detail by Vershinin in \cite{Ver}. The $\sigma_i$'s act as in the classical case, and the action of ``virtual" $\zeta_i$'s can be written in the matrix form as
$$\rho(\zeta_i)=\begin{pmatrix} I_{i-1} & 0&0&0 \\0&0&1&0\\ 0 &1&0&0\\0&0&0&I_{n-i-1}\end{pmatrix}.$$
\end{example}

\subsubsection{Associative and Leibniz algebras in categories}\label{sec:algebras}

In this subsection we turn to ``structural" braidings for associative / Leibniz algebras constructed in \cite{Lebed1}. Notation $\kk$ will always stand for a commutative ring here.

\medskip
First, recall that a Leibniz algebra is a ``non-commutative Lie algebra". That is,

\begin{definition}
A \emph{Leibniz $\kk$-algebra} is a $\kk$-module $V$ endowed with a bilinear operation $[,] : V\otimes V \longrightarrow V$ satisfying
\begin{equation}\label{eqn:Lei}\tag{Lei}
  [v,[w,u]]=[[v,w],u]-[[v,u],w] \qquad \forall v,w,u \in V.
\end{equation} 
\end{definition}

Remark that one gets the notion of a {Lie algebra} when adding the antisymmetry condition.

Leibniz algebras were discovered by Bloh in 1965, but it was Loday who woke the general interest in this structure in 1989 in his work on lifting the classical Chevalley-Eilenberg complex from the exterior to the tensor algebra (cf. \cite{Cyclic}).

\medskip
Now let us cite categorical versions of associative and Leibniz algebras.

\begin{definition}
\begin{itemize}
 \item A \emph{unital associative algebra} (abbreviated as UAA) in a strict monoidal category $\C$ is an object $V$ together with morphisms $\mu:V\otimes V\rightarrow V$ and $\nu:\II\rightarrow V,$ satisfying associativity condition
  $$\mu\circ(\Id_V\otimes\mu)=\mu\circ(\mu \otimes \Id_V)$$
  and unit condition
    $$\mu\circ(\Id_V\otimes\nu)=\mu\circ(\nu \otimes \Id_V)=\Id_V.$$

 \item A \emph{unital Leibniz algebra} (abbreviated as ULA) in a symmetric preadditive category $\C$ is an object $V$ together with morphisms $[,]:V\otimes V\rightarrow V$ and $\nu:\II\rightarrow V,$ satisfying generalized \emph{Leibniz condition}
 $$[,]\circ(\Id_V\otimes [,])=[,]\circ([,] \otimes \Id_V)-[,]\circ([,] \otimes \Id_V)\circ(\Id_V\otimes c_{V,V}):V^{\otimes 3}\rightarrow V$$
  and \emph{Lie unit} condition
   $$[,]\circ(\Id_V\otimes \nu)=[,]\circ(\nu \otimes \Id_V)=0.$$
\end{itemize}
\end{definition}

See for instance \cite{Baez} and \cite{Majid2} for the definition of algebras in a monoidal category, and \cite{Goyvaerts} for a survey on Lie algebras in a symmetric preadditive category.

Note that to write down the defining condition for a ULA, one needs a preadditive structure (for the sum of morphisms to make sense) and a symmetric braiding $c$ on the underlying category. 

Usual notions of algebras are recovered for $\C = \kVect.$ Super- and color Leibniz algebras (cf. \cite{LeiSuper} and \cite{LeiColor}) correspond to the choices $\C = \kVectGrad$ and $\C = _\Gamma\!\kVect.$ 

\medskip
Everything is now ready for constructing ``structural" braidings from \cite{Lebed1} for UAAs and ULAs.

\begin{theorem}\label{thm:BrForAlgebras}
\begin{enumerate}
\item A unital associative algebra $(V,\mu,\nu)$ in a monoidal category $(\C,\otimes,\II)$ is a weakly braided object in $\C,$ with the weak braiding given by
\begin{equation}\label{eqn:sigmaUAA}
\sigma_{V,\mu} := \nu \otimes \mu : V\otimes V = \II \otimes V \otimes V \rightarrow V \otimes V.
\end{equation} 
\item A unital Leibniz algebra $(V,[,],\nu)$ in a symmetric preadditive category $(\C,\otimes,\II,c)$ is a braided object in $\C,$ with the specific braiding given by
\begin{equation}\label{eqn:sigmaULA}
\sigma_{V,[,]}:= c_{V,V}+ \nu \otimes [,].
\end{equation}
\end{enumerate}
\end{theorem}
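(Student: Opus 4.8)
Both parts reduce to checking the Yang--Baxter equation \eqref{eqn:YB} for the proposed $\sigma$: for a weak braiding (part 1) this suffices, whereas for a genuine braiding (part 2) one must additionally produce an inverse. For the associative algebra the plan is a head-on computation. I would evaluate the two sides of \eqref{eqn:YB}, namely the composites $(\sigma_{V,\mu}\otimes\Id_V)\circ(\Id_V\otimes\sigma_{V,\mu})\circ(\sigma_{V,\mu}\otimes\Id_V)$ and $(\Id_V\otimes\sigma_{V,\mu})\circ(\sigma_{V,\mu}\otimes\Id_V)\circ(\Id_V\otimes\sigma_{V,\mu})$, on $V^{\otimes3}$ (using the slot notation \eqref{eqn:phi_i}). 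Because $\sigma_{V,\mu}=\nu\otimes\mu$ replaces its left input by a freshly created unit and multiplies its two inputs onto the right strand, each composite collapses --- the intermediate units being absorbed by the unit law $\mu\circ(\nu\otimes\Id_V)=\mu\circ(\Id_V\otimes\nu)=\Id_V$ --- to a unit on the first two tensor factors followed by a triple product on the third. The left-hand composite yields $\mu\circ(\mu\otimes\Id_V)$ on that third factor and the right-hand one yields $\mu\circ(\Id_V\otimes\mu)$, so the two agree exactly by associativity. As $\nu\otimes\mu$ is manifestly non-invertible, no inverse is expected or required, and part 1 is done.

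For the Leibniz case I would set $\beta:=\nu\otimes[,]$, so that $\sigma_{V,[,]}=c_{V,V}+\beta$, and use preadditivity to expand each side of \eqref{eqn:YB} into $2^3=8$ summands, organised by how many of the three factors are $\beta$ rather than $c_{V,V}$. The two purely-$c_{V,V}$ summands coincide because $c_{V,V}$ already satisfies Yang--Baxter, a consequence of the hexagon axioms \eqref{eqn:br_cat}--\eqref{eqn:br_cat2}. The Lie unit condition $[,]\circ(\nu\otimes\Id_V)=[,]\circ(\Id_V\otimes\nu)=0$ then annihilates every summand in which some bracket is fed a unit produced by an earlier $\beta$; this disposes of both triple-$\beta$ terms outright and of all but a few of the mixed ones. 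The surviving single-$\beta$ summands match between the two sides term by term, each matching being an instance of the naturality \eqref{eqn:nat} of $c_{V,V}$ (a lone $\beta$ carried across the braidings, the composite braidings being decomposed via \eqref{eqn:br_cat}--\eqref{eqn:br_cat2}). This leaves only the double-$\beta$ summands, which carry the entire content of the statement.

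The crux --- and the one genuinely non-formal step --- is this double-$\beta$ comparison, where all the delicate bookkeeping of which tensor slot each fresh unit occupies, and in which order the two brackets are formed, takes place. I would compute that the surviving double-$\beta$ part of the left-hand side is the morphism $\nu\otimes\nu\otimes\bigl([,]\circ([,]\otimes\Id_V)\bigr):V^{\otimes3}\to V^{\otimes3}$, while that of the right-hand side is $\nu\otimes\nu\otimes\bigl([,]\circ(\Id_V\otimes[,])+[,]\circ([,]\otimes\Id_V)\circ(\Id_V\otimes c_{V,V})\bigr)$; it is exactly in transporting one $\beta$ past a $c_{V,V}$ that naturality does the real work. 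Equality of these two morphisms is precisely the defining Leibniz condition \eqref{eqn:Lei}, so \eqref{eqn:YB} holds for $\sigma_{V,[,]}$. Finally, to upgrade this weak braiding to a genuine one I would exhibit an explicit inverse: the Lie unit condition gives $\beta\circ c_{V,V}\circ\beta=0$ (and indeed $\beta\circ\beta=0$), whence a direct check shows that $c_{V,V}^{-1}-c_{V,V}^{-1}\circ\beta\circ c_{V,V}^{-1}$ is a two-sided inverse of $c_{V,V}+\beta$, completing part 2.
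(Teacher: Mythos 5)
Your proposal is correct, and all the key steps check out: the collapse of both sides of \eqref{eqn:YB} for $\nu\otimes\mu$ to $\nu\otimes\nu\otimes\bigl(\mu\circ(\mu\otimes\Id_V)\bigr)$ versus $\nu\otimes\nu\otimes\bigl(\mu\circ(\Id_V\otimes\mu)\bigr)$, the identification of the surviving double-$\beta$ terms with the two sides of \eqref{eqn:Lei}, and the verification that $c_{V,V}^{-1}-c_{V,V}^{-1}\circ\beta\circ c_{V,V}^{-1}$ is a two-sided inverse via $\beta\circ c_{V,V}^{-1}\circ\beta=0$. The paper itself gives no proof of this theorem, deferring to \cite{Lebed1}; your direct expansion is exactly the expected argument, so there is nothing to contrast it with here.
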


A version of the braiding for Lie algebras appears in \cite{Crans}.

In $\kVect$ these braidings can be explicitly written as 
$$\sigma_{V,\cdot}(a \otimes b) = \one \otimes a\cdot b,$$
$$\sigma_{V,[,]}(a \otimes b) = b \otimes a + \one \otimes [a,b],$$
where $\one$ is the (Lie) unit of $V.$

Observe that, contrary to $\sigma_{V,[,]},$ the braiding $\sigma_{V,\mu}$ is highly non-invertible and hence not a specific braiding. Note also that these braidings characterize the corresponding structures: an ``if and only if" type result in the spirit of lemma \ref{thm:shelf} is proved in \cite{Lebed1}.

Combining the theorem with corollary \ref{crl:AlgCatVirtual} and its non-invertible version, one gets

\begin{proposition}
\begin{enumerate}
\item Take a UAA $V$ in a symmetric category $(\C,\otimes,\II,c).$ Then the pair $(c_{V,V},\sigma_{V,\mu})$ defines a $VB_n^+$ action.
 \item Take a ULA $V$ in a symmetric preadditive category $(\C,\otimes,\II,c).$ Then the pair $(c_{V,V},\sigma_{V,[,]})$ defines a $VB_n$ action.
\end{enumerate}
\end{proposition}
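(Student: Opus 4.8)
The plan is to deduce the statement directly by chaining together the two results just established, with no fresh Yang--Baxter verification required. Recall from Definition \ref{def:AlgCatVirtual} that a pair of endomorphisms of $V\otimes V$ ``defines a $VB_n$ (resp. $VB_n^+$) action'' precisely when the associated assignments $\zeta_i\mapsto\xi_i$, $\sigma_j\mapsto\vartheta_j$ yield a representation; and, as noted right after that definition, Corollary \ref{crl:AlgCatVirtual} together with its positive counterpart Theorem \ref{thm:AlgCatVirtualWeak} guarantees that the pair $(c_{V,V},\sigma_V)$ does exactly this whenever $(V,\sigma_V)$ is a (weakly) braided object in a symmetric category. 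So the whole task reduces to exhibiting the relevant algebra as a (weakly) braided object, which is precisely the content of Theorem \ref{thm:BrForAlgebras}.

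For part (1), I would first invoke Theorem \ref{thm:BrForAlgebras}(1): a UAA $(V,\mu,\nu)$ carries the weak braiding $\sigma_{V,\mu}=\nu\otimes\mu$, so $(V,\sigma_{V,\mu})$ is a weakly braided object. Since $\C$ is moreover symmetric, $c_{V,V}$ is automatically a specific braiding (as observed just before Theorem \ref{thm:AlgCatVirtual}), placing us exactly in the hypotheses of the weak/positive categorification Theorem \ref{thm:AlgCatVirtualWeak}. Applying it shows that $(c_{V,V},\sigma_{V,\mu})$ defines a $VB_n^+$ action in the sense of Definition \ref{def:AlgCatVirtual}. The restriction to the positive monoid should be flagged as the conceptual point of this half: $\sigma_{V,\mu}$ is non-invertible, so there is no candidate image for $\sigma_i^{-1}$ and one genuinely cannot reach a full $VB_n$ action.

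For part (2), I would instead use Theorem \ref{thm:BrForAlgebras}(2): in a symmetric preadditive category a ULA $(V,[,],\nu)$ carries the \emph{invertible} specific braiding $\sigma_{V,[,]}=c_{V,V}+\nu\otimes[,]$, so $(V,\sigma_{V,[,]})$ is a genuine braided object. Corollary \ref{crl:AlgCatVirtual} then applies verbatim and produces a full $VB_n$ action via $(c_{V,V},\sigma_{V,[,]})$, the invertibility of $\sigma_{V,[,]}$ now supplying the images of the $\sigma_i^{-1}$.

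The only thing requiring care --- and it is bookkeeping rather than a genuine obstacle --- is matching the ambient hypotheses to the two machines. Part (1) needs $\C$ merely symmetric (to turn $c_{V,V}$ into a specific braiding and to invoke Theorem \ref{thm:AlgCatVirtualWeak}), whereas part (2) needs $\C$ both symmetric \emph{and} preadditive, preadditivity being already required to write the Leibniz condition and to form the sum $c_{V,V}+\nu\otimes[,]$. Both hypotheses hold by assumption, so the composition of the cited results goes through with nothing further to check.
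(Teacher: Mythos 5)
Your proposal is correct and follows exactly the route the paper takes: it states the proposition as an immediate consequence of combining Theorem \ref{thm:BrForAlgebras} with Corollary \ref{crl:AlgCatVirtual} and its non-invertible (weak) version. Your additional remarks on matching the symmetric versus preadditive hypotheses and on the non-invertibility of $\sigma_{V,\mu}$ forcing the restriction to $VB_n^+$ are accurate glosses on the same argument.
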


It would be interesting to better understand these actions.

\section{Patch 4, a higher level one: arranging the higher level wardrobe}

\subsection{Virtuality as the choice of the ``right" world}\label{sec:CatForVB}

One of the advantages of the categoric vision of virtual braid group actions is an enhanced flexibility. In particular, if one has a braided object $(V,\sigma_V)$ in a symmetric category $(\C,\otimes,\II,c),$ then one can change the associated action of $VB_n$ on $V^{\otimes n}$ by changing either the specific braiding $\sigma_V$ of $V,$ or the symmetric braiding $c$ on $\C$ -- it gives a new action for free. The same is true for weakly braided objects and actions of $VB_n^+.$

A nice illustration of this phenomenon are the ``real" and ``virtual" actions of $VB_n$ on a virtual rack, cf. propositions \ref{thm:VBActsOnRacks} and \ref{thm:VBActsOnVRacks}. More precisely, in what follows we interpret 
``virtualization" of the action as moving to a new symmetric category rather than adding extra structure (the ``virtualization morphism" $f$) to a rack.

The symmetric category we suggest is a particular case of the following general construction.

\begin{theorem}\label{thm:C_V,f} 
Take a symmetric category $(\C,\otimes,\II,c)$ and fix an object $V$ with an automorphism $f \in \Aut_\C(V)$ in it.
\begin{enumerate}
\item A monoidal subcategory of $\C$ can be defined by taking as objects tensor powers $ V^{\otimes n}, n \ge 1 $ and $ V^{\otimes 0}:=\II,$ and as morphisms all morphisms in $\C$ compatible with $f,$ i.e.
$$\Hom^f( V^{\otimes n}, V^{\otimes m}):=\{\varphi \in \Hom_\C( V^{\otimes n}, V^{\otimes m}) | f^{\otimes m} \circ \varphi = \varphi\circ f^{\otimes n}\}.$$
\item This subcategory admits, in addition to $c,$ a new symmetric braiding given on $V \otimes V$ by
$$c^f_{V,V}:=(f^{-1}\otimes f)\circ c_{V,V}.$$
\end{enumerate}
\end{theorem}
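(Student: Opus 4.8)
The plan is to handle the two parts in turn: first the bookkeeping that makes the $f$-compatible morphisms into a monoidal subcategory, then the verification that $c^f$ obeys all the axioms of a symmetric braiding, with everything reduced to the level of $V\otimes V$ and $V^{\otimes 3}$. For the first part I would just check closure under the two composition laws. If $f^{\otimes m}\circ\varphi=\varphi\circ f^{\otimes n}$ and $f^{\otimes k}\circ\psi=\psi\circ f^{\otimes m}$, then $f^{\otimes k}\circ(\psi\circ\varphi)=\psi\circ f^{\otimes m}\circ\varphi=(\psi\circ\varphi)\circ f^{\otimes n}$; identities are compatible trivially; and the interchange law gives $f^{\otimes(m+q)}\circ(\varphi\otimes\psi)=(f^{\otimes m}\circ\varphi)\otimes(f^{\otimes q}\circ\psi)=(\varphi\otimes\psi)\circ f^{\otimes(n+p)}$, so the collection is closed under $\otimes$. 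Hence the $V^{\otimes n}$ together with the $\Hom^f$'s form a monoidal subcategory of $\C$.

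For the second part I would first check that $c^f_{V,V}$ is itself a morphism of this subcategory and is invertible. Invertibility is immediate, being a composite of isomorphisms. Compatibility with $f$ follows from the naturality \eqref{eqn:nat} of $c$ applied to $f\otimes f$, namely $c_{V,V}\circ(f\otimes f)=(f\otimes f)\circ c_{V,V}$: both $(f\otimes f)\circ c^f_{V,V}$ and $c^f_{V,V}\circ(f\otimes f)$ collapse to $(\Id_V\otimes f^2)\circ c_{V,V}$. I would then extend $c^f$ to all objects by the unique formula the hexagons \eqref{eqn:br_cat}--\eqref{eqn:br_cat2} force, exactly as in \eqref{eqn:br_powers}, so that the remaining axioms need only be checked on the generators $c^f_i$ acting on $V^{\otimes 2}$ and $V^{\otimes 3}$.

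The Yang--Baxter relation \eqref{eqn:YB} and the symmetry \eqref{eqn:symm_cat} for $c^f$ are cleanest to obtain by observing that the $c^f_i$ are a \emph{simultaneous conjugate} of the $c_i$. Repeated use of the naturality of $c$ yields $c^f_{V,V}=(f^{-1}\otimes\Id_V)\circ c_{V,V}\circ(f\otimes\Id_V)$, and, setting $\Phi:=\Id_V\otimes f^{-1}\otimes f^{-2}\otimes\cdots\otimes f^{-(n-1)}\in\Aut_\C(V^{\otimes n})$, a strand-by-strand computation (again via naturality of $c$) shows $c^f_i=\Phi^{-1}\circ c_i\circ\Phi$ for every $i$, with one and the same $\Phi$. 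Since conjugation by a fixed automorphism preserves every relation among the $c_i$, we get $c^f_1 c^f_2 c^f_1=\Phi^{-1}(c_1 c_2 c_1)\Phi=\Phi^{-1}(c_2 c_1 c_2)\Phi=c^f_2 c^f_1 c^f_2$ and $c^f_i c^f_i=\Phi^{-1}c_i^2\Phi=\Id$; the hexagons hold by construction.

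The step I expect to be the main obstacle is the naturality \eqref{eqn:nat} of the new braiding, and it is precisely here that the restriction to $f$-compatible morphisms is indispensable. For $\varphi,\psi$ in the subcategory one first pushes $c$ across the naturality square of the \emph{old} braiding; the two sides then differ only by the correction factors (powers of $f$) introduced by the definition of $c^f$, and these can be transported past $\varphi$ and $\psi$ exactly by the defining relation $f^{\otimes m}\circ\varphi=\varphi\circ f^{\otimes n}$. Without compatibility with $f$ the factors would remain stuck and the square would fail to close; with it, naturality holds. I would carry out this commutation on the generators $c^f_i$ and then propagate it to arbitrary objects through the hexagon expansion \eqref{eqn:br_powers}, which completes the verification that $c^f$ is a second symmetric braiding on the subcategory.
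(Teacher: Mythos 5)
Your proof is correct, and its skeleton (closure of $\Hom^f$ under composition and tensor product; checking $c^f_{V,V}\in\Hom^f$; extending to all powers by \eqref{eqn:br_powers} so that the hexagons \eqref{eqn:br_cat}--\eqref{eqn:br_cat2} hold by construction; deducing naturality on the subcategory by commuting the $f$-correction factors past $f$-compatible morphisms) coincides with the paper's. The one genuine difference is how you handle symmetry and the Yang--Baxter relation: the paper simply invokes ``pushing all instances of $f^{\pm 1}$ to the left'' via the naturality of $c$, a local rewriting argument, whereas you exhibit the single automorphism $\Phi=\Id_V\otimes f^{-1}\otimes\cdots\otimes f^{-(n-1)}$ of $V^{\otimes n}$ with $c^f_i=\Phi^{-1}\circ c_i\circ\Phi$ for all $i$ simultaneously (a computation I have checked; it is a correct consequence of \eqref{eqn:nat}). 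This conjugation device is slicker: it transports \emph{every} relation among the $c_i$'s to the $c^f_i$'s at once, including $\eqref{eqn:symm_cat}$, and it makes manifest that the two $VB_n$-type structures are isomorphic rather than merely both valid --- a point the paper only exploits later, in proposition \ref{thm:TwistedBr2} and corollary \ref{crl:TwistedBrK}, where essentially the same intertwining idea reappears. The only place where the paper's cruder ``pushing'' argument is still indispensable is naturality with respect to general morphisms of $\Hom^f$ (conjugation by $\Phi$ says nothing about those), and you correctly identify this as the step where $f$-compatibility is actually used.
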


Before proving the result, note that, thanks to the naturality of $c,$ one has an alternative expression 
$$c^f_{V,V}=c_{V,V} \circ (f\otimes f^{-1})$$
and, more generally, one can push any occurrence of $f^{\pm 1}$ from one side of $c_{V,V}$ to the other. Remark also that some basic morphisms are automatically in $\Hom^f,$ such as
\begin{itemize}
\item identities $\Id_{ V^{\otimes n}}$;
\item morphisms $f^{\pm 1}$;
\item the original braiding $c$ (thanks, as usual, to its naturality).
\end{itemize}

\begin{proof}
\begin{enumerate}
\item One easily checks that all the $\Id_{ V^{\otimes n}}$'s are in $\Hom^f,$ and that the latter is stable by composition and tensor product.
\item First, the previous point and remarks preceding the proof guarantee that $c^f_{V,V} \in \Hom^f.$ Next, extend $c^f$ to other powers $ V^{\otimes n}$ by formula \eqref{eqn:br_powers}. This extension remains in $\Hom^f.$ Such an extension ensures relations \eqref{eqn:br_cat} and \eqref{eqn:br_cat2} for $c^f.$ Remaining properties \eqref{eqn:nat} and \eqref{eqn:symm_cat} for $c^f$ follow from the corresponding properties for $c$ by pushing all the instances of $f^{\pm 1}$ on the left of each expression, using the naturality of $c$ and the compatibility of all morphisms in $\Hom^f$ with $f.$
\end{enumerate}
\end{proof}

\begin{definition}
The symmetric category constructed in the previous theorem 
is denoted by $\C_{V,f}.$
\end{definition}

Now take $\C = \Set$ and let $S$ be a rack. We have seen that $\sigma_{S,\lhd}$ is a specific braiding on $S.$ Given an automorphism $f$ of $S,$ one checks that $\sigma_{S,\lhd}$ is a morphism in the subcategory $\Set_{S,f}$ if and only if $f$ is a rack morphism. Thus, if $(S,f)$ is a virtual rack, then $(S,\sigma_{S,\lhd})$ is still a braided object in the symmetric subcategory $(\Set_{S,f}; \tau)$ of $\Set.$ 
According to the previous theorem, one can modify the symmetric braiding of $\Set_{S,f}$:

\begin{proposition}
The action of $VB_n$ on the braided object $(S,\sigma_{S,\lhd})$ of the symmetric category $(\Set_{S,f}; \tau^f)$ is precisely the one given in proposition \ref{thm:VBActsOnVRacks}.
\end{proposition}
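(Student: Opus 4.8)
The plan is to simply unwind the $VB_n$-action furnished by corollary \ref{crl:AlgCatVirtual} in the present setting and to check that it coincides term by term with the formulas \eqref{eqn:VBactionOnVRacks} and \eqref{eqn:VBactionOnRacks2}. First I would confirm that the hypotheses of that corollary are met: theorem \ref{thm:C_V,f} guarantees that $(\Set_{S,f};\tau^f)$ is a genuine symmetric category, while lemma \ref{thm:shelf} shows that $\sigma_{S,\lhd}$ is a specific braiding on the rack $S$. Since $(S,\lhd,f)$ is a virtual rack, the automorphism $f$ is a rack morphism, so—as remarked just before the statement—$\sigma_{S,\lhd}$ indeed lies in $\Hom^f(S\times S,S\times S)$ and is thus a specific braiding on the object $S$ of $\Set_{S,f}$. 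Hence $(S,\sigma_{S,\lhd})$ is a braided object in $(\Set_{S,f};\tau^f)$ and corollary \ref{crl:AlgCatVirtual} applies, yielding a $VB_n$-action on $S^{\times n}$.

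Next I would make the two local braidings explicit. By construction the symmetric braiding of $\Set_{S,f}$ is $\tau^f_{S,S}=(f^{-1}\times f)\circ\tau$, so that
$$\tau^f_{S,S}(a,b)=(f^{-1}(b),f(a)).$$
Inserting this in positions $(i,i+1)$, formula \eqref{eqn:vbr_grp_cat} sends $\zeta_i$ to the map
$$(a_1,\ldots,a_n)\longmapsto(a_1,\ldots,a_{i-1},f^{-1}(a_{i+1}),f(a_i),a_{i+2},\ldots,a_n),$$
which is exactly \eqref{eqn:VBactionOnVRacks}. Likewise the specific braiding is $\sigma_{S,\lhd}(a,b)=(b,a\lhd b)$, so formula \eqref{eqn:vbr_grp_cat2} sends $\sigma_i$ to
$$(a_1,\ldots,a_n)\longmapsto(a_1,\ldots,a_{i-1},a_{i+1},a_i\lhd a_{i+1},a_{i+2},\ldots,a_n),$$
namely \eqref{eqn:VBactionOnRacks2}. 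Since both families of generators act by the prescribed formulas, the induced action is precisely the one of proposition \ref{thm:VBActsOnVRacks}.

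The argument is essentially a direct computation, so I do not expect a deep obstacle; the only delicate point is the bookkeeping of conventions—specifically, verifying that the order of $f^{-1}$ and $f$ produced by $(f^{-1}\times f)\circ\tau$ matches the placement in \eqref{eqn:VBactionOnVRacks}, and that the local braidings are correctly seated in positions $i,i+1$ with identities elsewhere. As a consistency check one may use the alternative expression $\tau^f_{S,S}=\tau\circ(f\times f^{-1})$ noted after theorem \ref{thm:C_V,f}, which yields the same map $(a,b)\mapsto(f^{-1}(b),f(a))$.
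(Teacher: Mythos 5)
Your proposal is correct and amounts to exactly the verification the paper leaves implicit: it states this proposition without proof, relying on the preceding paragraph (that $f$ being a rack morphism puts $\sigma_{S,\lhd}$ in $\Hom^f$, so corollary \ref{crl:AlgCatVirtual} applies in $(\Set_{S,f};\tau^f)$) and on the evident computation $\tau^f_{S,S}(a,b)=(f^{-1}(b),f(a))$, $\sigma_{S,\lhd}(a,b)=(b,a\lhd b)$. Your unwinding of these two local braidings in positions $i,i+1$ matches formulas \eqref{eqn:VBactionOnVRacks} and \eqref{eqn:VBactionOnRacks2} exactly, so there is nothing to add.
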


\subsection{Virtually twisted braidings}\label{sec:twisted}

Another advantage of disposing of two different braidings is the possibility of twisting the specific one using the underlying symmetric one:

\begin{theorem}\label{thm:TwistedBr}
Take a (weakly) braided object $(V,\sigma_V)$ in a symmetric category $(\C,\otimes,\II,c).$ Then $V$ can be endowed with another specific (resp. weak) braiding $\sigma'_V:=c_{V,V}\circ\sigma_V\circ c_{V,V}.$ 
\end{theorem}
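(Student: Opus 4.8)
The statement has two parts: that $\sigma'_V:=c_{V,V}\circ\sigma_V\circ c_{V,V}$ satisfies the Yang--Baxter equation \eqref{eqn:YB}, and that it is invertible as soon as $\sigma_V$ is. The second part is immediate: the symmetry axiom \eqref{eqn:symm_cat} gives $c_{V,V}\circ c_{V,V}=\Id_{V\otimes V}$, so $c_{V,V}$ is its own inverse, and hence if $\sigma_V$ is invertible then so is $\sigma'_V$, with $(\sigma'_V)^{-1}=c_{V,V}\circ\sigma_V^{-1}\circ c_{V,V}$. All the work is thus in checking \eqref{eqn:YB} for $\sigma'_V$, and the plan is to do this without ever inverting $\sigma_V$, so that a single argument covers the braided and the weakly braided cases simultaneously.

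Writing $\sigma_1=\sigma_V\otimes\Id_V$, $\sigma_2=\Id_V\otimes\sigma_V$, $c_1=c_{V,V}\otimes\Id_V$ and $c_2=\Id_V\otimes c_{V,V}$ as endomorphisms of $V^{\otimes 3}$, functoriality of $\otimes$ gives $\sigma'_1=c_1\sigma_1 c_1$ and $\sigma'_2=c_2\sigma_2 c_2$, so the goal is $\sigma'_1\sigma'_2\sigma'_1=\sigma'_2\sigma'_1\sigma'_2$. I would realize each $\sigma'_i$ as a conjugate of an unprimed $\sigma_j$ by the order-reversing symmetry $w:=c_1c_2c_1=c_2c_1c_2$ of $V^{\otimes 3}$, and then transport the Yang--Baxter equation for $\sigma_V$ (true by hypothesis) through this conjugation. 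Note first that $c_1^2=c_2^2=\Id$ by \eqref{eqn:symm_cat} and $c_1c_2c_1=c_2c_1c_2$ by \eqref{eqn:br_cat}--\eqref{eqn:br_cat2}; in particular $w^2=\Id$. The two key identities are $w\circ\sigma_1\circ w=\sigma'_2$ and $w\circ\sigma_2\circ w=\sigma'_1$, which I would derive from the naturality \eqref{eqn:nat} of $c$ applied to the morphism $\sigma_V$. Using $c_{V\otimes V,V}=c_1c_2$ (from \eqref{eqn:br_cat2}) and $c_{V,V\otimes V}=c_2c_1$ (from \eqref{eqn:br_cat}), naturality yields
\[
c_1c_2\,\sigma_1=\sigma_2\,c_1c_2,\qquad c_2c_1\,\sigma_2=\sigma_1\,c_2c_1.
\]
Substituting $w=c_2c_1c_2$ and pushing $\sigma_1$ through the first relation, then cancelling the pairs $c_2c_2$ and $c_1c_1$, collapses $w\sigma_1 w$ to $c_2\sigma_2 c_2=\sigma'_2$; symmetrically, using $w=c_1c_2c_1$ and the second relation gives $w\sigma_2 w=\sigma'_1$.

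Granting these, and using $w^2=\Id$, one computes $\sigma'_1\sigma'_2\sigma'_1=(w\sigma_2 w)(w\sigma_1 w)(w\sigma_2 w)=w\,\sigma_2\sigma_1\sigma_2\,w$ and likewise $\sigma'_2\sigma'_1\sigma'_2=w\,\sigma_1\sigma_2\sigma_1\,w$; the Yang--Baxter equation \eqref{eqn:YB} for $\sigma_V$ then gives $\sigma_2\sigma_1\sigma_2=\sigma_1\sigma_2\sigma_1$, hence the desired equality. The main obstacle is purely bookkeeping: correctly identifying $c_{V\otimes V,V}$ and $c_{V,V\otimes V}$ with $c_1c_2$ and $c_2c_1$ and feeding the \emph{right} morphism into \eqref{eqn:nat}, so that the conjugation relations come out exactly as $w\sigma_1 w=\sigma'_2$ and $w\sigma_2 w=\sigma'_1$; once these are secured the conclusion is a one-line conjugation of the hypothesis, and no step uses invertibility of $\sigma_V$. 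An equivalent, more combinatorial route would invoke Corollary \ref{crl:AlgCatVirtual} and reduce the claim to the single word identity $(\zeta_1\sigma_1\zeta_1)(\zeta_2\sigma_2\zeta_2)(\zeta_1\sigma_1\zeta_1)=(\zeta_2\sigma_2\zeta_2)(\zeta_1\sigma_1\zeta_1)(\zeta_2\sigma_2\zeta_2)$ in $VB_3^+$, but the conjugation argument is cleaner and treats both the specific and the weak braiding at once.
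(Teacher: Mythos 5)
Your proof is correct and takes essentially the same route as the paper: both arguments conjugate by the total twist, which is an involution intertwining $\sigma_1,\sigma_2$ with $\sigma'_2,\sigma'_1$, and then transport the Yang--Baxter equation for $\sigma_V$ through that conjugation. The paper merely packages this inside $VB_n$ (via the Garside involution $t(\theta)=\Delta_n\theta\Delta_n$ together with corollary \ref{crl:AlgCatVirtual}), whereas you unwind the same computation directly in $\End_{\C}(V^{\otimes 3})$ using the naturality of $c$.
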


\begin{proof}
The only non-trivial property to check is equation \eqref{eqn:YB} for $\sigma'_V.$ We treat only specific braidings here, the weak case being similar.

Consider the twisting application 
\begin{align*}
t:VB_n &\longrightarrow VB_n,\\
\theta & \longmapsto \Delta_n \theta\Delta _n,
\end{align*}
where $\Delta_n$ is the Garside element, i.e. the total twist
$\Delta_n:=\bigl( \begin{smallmatrix}
 1 & 2 & \cdots & n\\ n & n-1 & \cdots & 1
\end{smallmatrix} \bigr)
\in S_n,$ seen as an element of $VB_n$ via inclusion \eqref{eqn:BraidPerm}. Since $\Delta_n\Delta_n=1,$ the map $t$ is extremely nice:
\begin{enumerate}
\item $t$ is a group map;
\item $t$ is involutive, hence an isomorphism.
\end{enumerate}
On the generators, $t$ gives
\begin{align*}
t(\sigma_i) &=\sigma'_{n-i},\\
t(\zeta_i) &=\zeta_{n-i},
\end{align*}
where $\sigma':=\zeta\sigma\zeta.$
Equation \eqref{eqn:BrYB} for $\sigma'$ is now a consequence of \eqref{eqn:BrYB} for $\sigma.$

To conclude, notice that $\sigma'_V$ is precisely the action, according to corollary \ref{crl:AlgCatVirtual}, of the element $\sigma'$ of $VB_n.$
\end{proof}

\begin{definition}
We call the braiding from the previous theorem \emph{twisted}.
\end{definition}

The element $\sigma'$ of $VB_n$ (or $VB_n^+$) is graphically depicted as
 \begin{center}
\begin{tikzpicture}[scale=0.5]
\draw [rounded corners] (5,-2) -- (5,-1.5) -- (4,-0.5) --(4,0);
\draw [rounded corners] (4,-2) -- (4,-1.5) -- (5,-0.5) -- (5,0);
\draw (4.5,-1) circle (0.3);
\draw [rounded corners] (5,0) -- (5,0.5) -- (4.6,0.9);
\draw [rounded corners] (4.4,1.1) -- (4,1.5) --(4,2);
\draw [rounded corners] (4,0) -- (4,0.5) -- (5,1.5) -- (5,2);
\draw [rounded corners] (5,2) -- (5,2.5) -- (4,3.5) --(4,4);
\draw [rounded corners] (4,2) -- (4,2.5) -- (5,3.5) -- (5,4);
\draw (4.5,3) circle (0.3);
\node  at (6,-2){.};
\end{tikzpicture}
\end{center}

This element is quite famous in the virtual knot theory, since many invariants do not distinguish it from the original braiding $\sigma.$

\medskip
In section \ref{sec:CatForVB} we encountered a category with two distinct symmetric braidings. With this in mind, one can state a stronger version of the previous theorem, based on similar observations:

\begin{proposition}\label{thm:TwistedBr2} 
Take a braided object $(V,\sigma_V)$ in a category $(\C,\otimes,\II)$ admitting two symmetric braidings $b$ and $c.$ Put $\sigma''_V:=c_{V,V}\circ b_{V,V}\circ\sigma_V\circ b_{V,V}\circ c_{V,V}$ and $b'_V:=c_{V,V}\circ b_{V,V} \circ c_{V,V}.$   
Then the pair $(b'_V,\sigma''_V)$ defines a $VB_n$ action, isomorphic to the action given by $(b_{V,V}, \sigma_V).$
\end{proposition}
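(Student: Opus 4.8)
The plan is to produce, for each $n$, a single invertible morphism $\Phi_n$ of $V^{\otimes n}$ whose conjugation carries the $(b_{V,V},\sigma_V)$-representation onto the $(b'_V,\sigma''_V)$-pair; this settles both assertions at once, since an action conjugate to a genuine $VB_n$ action is itself a $VB_n$ action, and the conjugator is then the desired isomorphism. First I would note that $b$ and $c$ are both symmetric braidings on $\C$ and $(V,\sigma_V)$ is a braided object, so by Corollary \ref{crl:AlgCatVirtual} the pair $(b_{V,V},\sigma_V)$ already defines a $VB_n$ action $\rho$, with $\rho(\zeta_i)=(b_{V,V})_i$ and $\rho(\sigma_i)=(\sigma_V)_i$. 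For $d\in\{b,c\}$ let $C^d_n$ denote the total reversal of $V^{\otimes n}$ assembled from $d$ via \eqref{eqn:br_powers}; each $C^d_n$ is an involution because $d$ is symmetric, and $C^b_n=\rho(\Delta_n)$ for the Garside element $\Delta_n$ of \eqref{eqn:BraidPerm}.

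The key step is a conjugation lemma: for $d\in\{b,c\}$ and any endomorphism $\phi$ of $V\otimes V$,
$$C^d_n\circ\phi_i\circ C^d_n=(d_{V,V}\circ\phi\circ d_{V,V})_{n-i}.$$
I would prove this by sliding the local morphism $\phi$ through the reversal: the crossings of the block carrying $\phi$ with the remaining strands are governed by naturality \eqref{eqn:nat} of $d$ and merely relocate $\phi$ to the factors $(n-i,n-i+1)$, while the single mutual crossing of the two active strands conjugates $\phi$ by $d_{V,V}$. For $\phi\in\{b_{V,V},\sigma_V\}$ this reproduces the generator identities $\Delta_n\zeta_i\Delta_n=\zeta_{n-i}$ and $\Delta_n\sigma_i\Delta_n=\zeta_{n-i}\sigma_{n-i}\zeta_{n-i}$ already used in the proof of Theorem \ref{thm:TwistedBr}, which is reassuring.

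Then I would set $\Phi_n:=C^c_n\circ C^b_n$, which is invertible with inverse $C^b_n\circ C^c_n$. Applying the lemma twice — first for $b$ at position $i$, then for $c$ at position $n-i$, so that the two position reversals cancel — gives, for every $\phi$,
$$\Phi_n\circ\phi_i\circ\Phi_n^{-1}=(c_{V,V}\circ b_{V,V}\circ\phi\circ b_{V,V}\circ c_{V,V})_i.$$
Taking $\phi=b_{V,V}$ and using $b_{V,V}\circ b_{V,V}=\Id$ yields exactly $(b'_V)_i$, while $\phi=\sigma_V$ yields $(\sigma''_V)_i$. Hence $\Phi_n$ intertwines $\rho(\zeta_i),\rho(\sigma_i)$ with $(b'_V)_i,(\sigma''_V)_i$; since $\rho$ is a $VB_n$ action and $\Phi_n$ is an isomorphism, the pair $(b'_V,\sigma''_V)$ defines a $VB_n$ action isomorphic to $(b_{V,V},\sigma_V)$, as claimed.

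I expect the main obstacle to be the careful justification of the conjugation lemma for a \emph{general} $\phi$ rather than only for the generating braidings: one must verify that conjugation by the total $d$-reversal reverses the position and flips the local morphism by a single copy of $d_{V,V}$, with no spurious extra factors. This is precisely where naturality and symmetry of $d$ do all the work; once it is in place, the remainder of the argument is a two-line formal computation.
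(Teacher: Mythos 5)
Your proposal is correct and follows essentially the same route as the paper: the paper also conjugates first by the total reversal built from $b$ (sending $(\sigma_V)_i$ to $(b_{V,V}\circ\sigma_V\circ b_{V,V})_{n-i}$ and fixing the $b$-generators up to position reversal) and then by the total reversal built from $c$, composing the two involutions to obtain the intertwiner. Your only addition is to state the conjugation identity as a single lemma valid for an arbitrary local endomorphism $\phi$, which is a clean packaging of exactly the generator computations the paper performs.
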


\begin{proof}
Put $\sigma'_V:=b_{V,V}\circ\sigma_V\circ b_{V,V}.$

The involutive action of $\Delta_n:= \bigl( \begin{smallmatrix}
 1 & 2 & \cdots & n\\ n & n-1 & \cdots & 1
\end{smallmatrix} \bigr)
\in S_n$ on $V^{\otimes n}$ via the symmetric braiding $b$ intertwines $(\sigma_V)_i$ and $(\sigma'_V)_{n-i},$ as well as $(b_{V,V})_i$ and $(b_{V,V})_{n-i}.$ Further, the involutive action of $\Delta_n$ on $V^{\otimes n}$ via the second symmetric braiding $c$  intertwines $(\sigma'_V)_i$ and $(\sigma''_V)_{n-i},$ as well as $(b_{V,V})_i$ and $(b'_{V,V})_{n-i}.$ Their composition yields the announced isomorphism.
\end{proof}

An analogue of this result for weak actions can easily be formulated.

Let us now see what this proposition gives in the setting of theorem \ref{thm:C_V,f}, 
which is a source of two symmetric braidings coexisting in a category.
Taking $b=c^f,$ one gets

\begin{proposition}\label{thm:TwistedBr3}
In a symmetric category $(\C,\otimes,\II,c),$ take a braided object $(V,\sigma_V)$ endowed with an automorphism $f$  compatible with the braiding, i.e.
$$\sigma_V \circ (f\otimes f) = (f\otimes f) \circ \sigma_V.
$$
Then the pairs $(c_{V,V}^f,\sigma_V )$ and $(c_{V,V}^{f^{-1}},(f\otimes f^{-1})\circ\sigma_V\circ (f^{-1}\otimes f) )$ give $VB_n$ actions, which are isomorphic.
\end{proposition}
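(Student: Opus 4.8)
The plan is to apply Proposition \ref{thm:TwistedBr2} with a carefully chosen pair of symmetric braidings, namely taking $b := c^f$ and $c := c$, the original braiding. The setting of Theorem \ref{thm:C_V,f} guarantees that both $c$ and $c^f$ are symmetric braidings on the subcategory $\C_{V,f}$ (provided we verify $\sigma_V$ and $f$ live there appropriately), so Proposition \ref{thm:TwistedBr2} applies directly once I confirm its hypotheses. First I would check that $(V,\sigma_V)$ is genuinely a braided object in $\C_{V,f}$: this requires $\sigma_V \in \Hom^f(V^{\otimes 2}, V^{\otimes 2})$, which is exactly the compatibility assumption $\sigma_V \circ (f \otimes f) = (f \otimes f) \circ \sigma_V$ in the statement. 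So this hypothesis is precisely what makes the twisting construction available.

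The second step is to compute the two braidings that Proposition \ref{thm:TwistedBr2} produces, namely $b'_V = c_{V,V} \circ b_{V,V} \circ c_{V,V}$ and $\sigma''_V = c_{V,V} \circ b_{V,V} \circ \sigma_V \circ b_{V,V} \circ c_{V,V}$, and match them to the claimed formulas. For the symmetric part, substituting $b = c^f = (f^{-1} \otimes f) \circ c_{V,V}$ and using the naturality of $c$ together with symmetry $c_{V,V} \circ c_{V,V} = \Id$, I would push the factors of $f^{\pm 1}$ through the braidings. The expected outcome is
$$
b'_V = c_{V,V} \circ (f^{-1} \otimes f) \circ c_{V,V} \circ c_{V,V} = c_{V,V} \circ (f^{-1} \otimes f),
$$
and by naturality this equals $(f \otimes f^{-1}) \circ c_{V,V}$, which is exactly $c^{f^{-1}}_{V,V}$ by the alternative expression noted after Theorem \ref{thm:C_V,f}. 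This identifies the first entry of the pair as $c^{f^{-1}}_{V,V}$, as claimed.

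Third, for the specific braiding I would perform the analogous computation on $\sigma''_V$, substituting $b_{V,V} = (f^{-1} \otimes f) \circ c_{V,V}$ on both sides of $\sigma_V$ and simplifying the outer $c_{V,V}$'s against the $c_{V,V}$'s inside $b$. The two middle $c_{V,V}$'s cancel against the outer ones by symmetry, leaving $\sigma''_V = (f \otimes f^{-1}) \circ \sigma_V \circ (f^{-1} \otimes f)$ after the $f^{\pm 1}$ factors are pushed out using naturality. This matches the second entry of the pair exactly. The isomorphism of the two $VB_n$ actions is then inherited verbatim from the conclusion of Proposition \ref{thm:TwistedBr2}, since our $(c^f_{V,V}, \sigma_V)$ plays the role of its $(b_{V,V}, \sigma_V)$.

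The main obstacle is bookkeeping in the $f^{\pm 1}$-pushing steps: one must apply naturality \eqref{eqn:nat} in the correct direction each time and keep track of whether $f$ or $f^{-1}$ lands on the left or right tensor factor as it crosses a braiding. The compatibility of $\sigma_V$ with $f \otimes f$ is what lets these factors slide past $\sigma_V$ cleanly, so I would invoke it explicitly at the moment an $f^{\pm 1}$ needs to cross $\sigma_V$. Beyond this, everything reduces to the formal consequences of Proposition \ref{thm:TwistedBr2} and the remarks following Theorem \ref{thm:C_V,f}; no genuinely new argument is required.
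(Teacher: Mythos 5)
Your proposal is correct and follows exactly the paper's route: the paper derives this proposition by applying Proposition \ref{thm:TwistedBr2} with $b=c^f$ in the setting of Theorem \ref{thm:C_V,f}, and your computations of $b'_V=c^{f^{-1}}_{V,V}$ and $\sigma''_V=(f\otimes f^{-1})\circ\sigma_V\circ(f^{-1}\otimes f)$ are what the paper leaves implicit. The only small imprecision is in your final paragraph: the compatibility $\sigma_V\circ(f\otimes f)=(f\otimes f)\circ\sigma_V$ is not used to slide $f^{\pm1}$ past $\sigma_V$ in the simplification (they remain in the final formula), but only, as you correctly say earlier, to guarantee $\sigma_V\in\Hom^f$ so that the two symmetric braidings coexist with $\sigma_V$ in $\C_{V,f}$.
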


Applying this proposition to categories $\C_{V,f^k},$ one obtains:
\begin{corollary}\label{crl:TwistedBrK}
In the settings of the preceding proposition, the pairs $(c_{V,V}^{f^{k+1}}, \sigma_V)$ and $(c_{V,V}^{f^{k-1}},(f\otimes f^{-1})\circ\sigma_V\circ (f^{-1}\otimes f))$ give isomorphic $VB_n$ actions for any $k \in \ZZ.$
\end{corollary}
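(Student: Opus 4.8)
The plan is to recognize that this corollary is Proposition \ref{thm:TwistedBr3} applied verbatim, not in the ambient category $\C$, but in the symmetric category $\C_{V,f^k}$ furnished by Theorem \ref{thm:C_V,f}. The only genuine content is to check that the hypotheses of \ref{thm:TwistedBr3} survive the change of ambient category and that the two resulting twisted symmetric braidings telescope to $c^{f^{k+1}}$ and $c^{f^{k-1}}$.

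First I would fix $k\in\ZZ$ and pass to the symmetric category $\C_{V,f^k}$, whose distinguished symmetric braiding is $(f^{-k}\otimes f^k)\circ c_{V,V}=c^{f^k}_{V,V}$ by Theorem \ref{thm:C_V,f}. I would let this $c^{f^k}$ play the role of the symmetric braiding ``$c$'' in Proposition \ref{thm:TwistedBr3}, while keeping the same automorphism $f$ and the same specific braiding $\sigma_V$. Next I would verify the three hypotheses of \ref{thm:TwistedBr3} in this new setting. The standing compatibility $\sigma_V\circ(f\otimes f)=(f\otimes f)\circ\sigma_V$ yields, upon iterating, $\sigma_V\circ(f^k\otimes f^k)=(f^k\otimes f^k)\circ\sigma_V$, so that $\sigma_V\in\Hom^{f^k}$ and $(V,\sigma_V)$ is a genuine braided object of $\C_{V,f^k}$; the automorphism $f$ lies in $\Hom^{f^k}(V,V)$ because $f$ commutes with $f^k$; and $f$ remains compatible with $\sigma_V$, this being precisely the hypothesis carried over from the preceding proposition.

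The final step is the bookkeeping of the two twisted braidings. Using the formula $c^g_{V,V}=(g^{-1}\otimes g)\circ c_{V,V}$ of Theorem \ref{thm:C_V,f} and composing automorphisms, the ``$c^f$'' and ``$c^{f^{-1}}$'' that \ref{thm:TwistedBr3} produces out of the symmetric braiding $c^{f^k}$ become
$$(f^{-1}\otimes f)\circ c^{f^k}_{V,V}=(f^{-k-1}\otimes f^{k+1})\circ c_{V,V}=c^{f^{k+1}}_{V,V},$$
$$(f\otimes f^{-1})\circ c^{f^k}_{V,V}=(f^{-k+1}\otimes f^{k-1})\circ c_{V,V}=c^{f^{k-1}}_{V,V},$$
whereas the specific braiding on the second side, namely $(f\otimes f^{-1})\circ\sigma_V\circ(f^{-1}\otimes f)$, is unchanged, since \ref{thm:TwistedBr3} twists only by $f$ and never by $f^k$. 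Thus Proposition \ref{thm:TwistedBr3}, read inside $\C_{V,f^k}$, asserts exactly that $(c^{f^{k+1}}_{V,V},\sigma_V)$ and $(c^{f^{k-1}}_{V,V},(f\otimes f^{-1})\circ\sigma_V\circ(f^{-1}\otimes f))$ give isomorphic $VB_n$ actions, which is the claim.

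I expect no substantial obstacle here: the statement is a change-of-base-category corollary, and the only point needing care is that the ``push $f^{\pm1}$ through $c$'' manipulations (legitimised by the naturality of $c$, as noted right after Theorem \ref{thm:C_V,f}) are applied consistently so that the exponents add correctly in the two telescoping identities above. Beyond these identities and the routine verification that $\sigma_V$ and $f$ descend to morphisms of $\C_{V,f^k}$, no new construction is required.
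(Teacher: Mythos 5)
Your proposal is correct and follows exactly the paper's route: the paper's entire justification is the phrase ``Applying this proposition to categories $\C_{V,f^k}$, one obtains,'' and you have simply spelled out that application — checking that $\sigma_V$ and $f$ are morphisms of $\C_{V,f^k}$ and that the two twisted symmetric braidings telescope to $c^{f^{k+1}}$ and $c^{f^{k-1}}$. No discrepancy with the paper's argument.
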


\begin{example}
Consider the second virtual quandle structure from example \ref{ex:AlexVirt}. The automorphism $f(a)=sa$ is  compatible with the braiding $\sigma_{A,\lhd}$ since $f$ is a quandle morphism. Then the preceding corollary establishes an isomorphism between $VB_n$ actions given by the pairs $(\tau^{f^{k+1}},\sigma_{A,\lhd})$ and $(\tau^{f^{k-1}},\sigma''_{A,\lhd}),$ where
$$\sigma''_{A,\lhd}(a,b)= (s^2b, ts^{-2}a +(1-t)b).$$
Note that $A$ is also a $\ZZ[u^{\pm 1}, v^{\pm 1}]$-module, with $u$ acting by $s^2$ and $v$ by $ts^{-2}.$ The matrix form of $\sigma'_{A,\lhd}$ is then
$$\begin{pmatrix} 0 & u\\ v &1 -uv\end{pmatrix},$$
 which is precisely the \emph{twisted Burau matrix} (cf. \cite{SW}, or \cite{KaufRad}, where it is recovered via Alexander biquandles).

Further, the isomorphism of actions for $k=1$ can be interpreted, in this example, as follows: virtualizing the $VB_n$ action on a rack, in the sense of proposition \ref{thm:VBActsOnVRacks} (for $f(a)=ua$) is equivalent to twisting the braiding $\sigma_{A,\lhd}$, in the sense of proposition \ref{thm:TwistedBr2}. 
This was noticed in \cite{BartFenn}.
\end{example}

\begin{remark}
One more result from \cite{BartFenn} admits a natural interpretation using the tools developed here. It is the possibility to transform a matrix solution $\bigl( \begin{smallmatrix}
A & B\\ C &D\end{smallmatrix} \bigr)$ of the Yang-Baxter equation into a solution $\bigl( \begin{smallmatrix}
A & tB\\ t^{-1}C &D\end{smallmatrix} \bigr),$ and the equivalence of the two induced representations of the braid group $B_n.$ This, as well as their theorem 7.1, follows from corollary \ref{crl:TwistedBrK} by taking $\C = \kVectSum,$ $k=1$ and $f(v)=sv,$ with $s^2=t.$
\end{remark}

\subsection{Shelves and racks in other worlds}\label{sec:CatSD}

As we have seen in section \ref{sec:algebras}, associative and Leibniz algebras in monoidal categories are defined very naturally and provide a rich source of braided objects in symmetric categories. Since shelves and quandles are braided objects par excellence, it would be interesting to categorify them and to look for new examples of braidings emerging in this generalized setting. Such a categorification will be given here, with several examples in the next section, including -- quite unexpectedly! -- associative and Leibniz algebras.

The main difficulty resides in interpreting the \emph{diagonal map} 
\begin{equation}\label{eqn:diag}
\D_S: a\mapsto (a,a),
\end{equation}
implicit on the right side of \eqref{eqn:SelfDistr}. The flip, equally implicit in \eqref{eqn:SelfDistr} (moving one of the $c$'s before $b$), is also to be treated appropriately. Two approaches are proposed in \cite{CatSelfDistr} (see also \cite{Crans}):
\begin{enumerate}
\item generally, one can work in an additive category, which admits binary products and hence diagonal and transposition morphisms;
\item a concrete example of an additive category is $\mathbf{Coalg},$ the category of counital coalgebras over a field $\k,$ with the comultiplication as diagonal map and the flip as transposition map. 
\end{enumerate}

The approach presented here is quite different. Instead of requiring a diagonal map on the categoric level, we make it part of the categoric SD structure and impose compatibility with the ``product" $\lhd,$ in the spirit of bialgebras. Thus we reserve the underlying symmetric category structure only for the ``virtual part" of $VB_n$ (the $\zeta_i$'s), and the braided object structure (coming, according to lemma \ref{thm:shelf}, from the rack structure) for the ``real part" ($\sigma_i$'s). This seems to us more consistent with the topological interpretation, where virtual crossings are just artefacts of depicting a diagram in the plane, while usual crossings come from the intrinsic knot structure. 

Here is a list of other advantages of our approach: 
\begin{itemize}
\item we work in a general monoidal rather than $\k$-linear setting;
\item no counit is demanded (note that counits pose some problems in \cite{CatSelfDistr}, and they do not exist in one of the examples given below);
\item cocommutativity, often necessary in \cite{CatSelfDistr}, is replaced with a weaker notion - again, with an example when the difference is essential;
\item the flexibility in the choice of the underlying symmetric category allows to treat, among other structures, Leibniz superalgebras.
\end{itemize}
On the negative side, our definition is quite heavy, since, for example, one has to replace conditions like ``a morphism in $\mathbf{Coalg}$" with their concrete meaning. The reader is advised to draw pictures, in the spirit of \cite{CatSelfDistr}, to better manipulate all the notions.

\medskip
Recall notations \eqref{eqn:phi_i} and \eqref{eqn:phi^i}.
\begin{definition}\label{def:shelf_gen}
Take a symmetric category $(\C,\otimes,\II,c).$ A $V\in\Ob(\C)$ is called \emph{a shelf} in $\C$ if it is endowed with two morphisms $\Delta:V\longrightarrow V\otimes V$ and $\lhd:V\otimes V\longrightarrow V$ satisfying the following conditions (where the braiding $c_{V,V}$ is denoted simply by $c$ for succinctness):
\begin{enumerate}
\item $\Delta$ is a coassociative \emph{weakly cocommutative} comultiplication, i.e.
$$\Delta_1\circ \Delta = \Delta_2\circ \Delta:V\longrightarrow V^{\otimes 3},$$
$$c_2\circ \Delta^3 = \Delta^3:V\longrightarrow V^{\otimes 4};$$
\item $\lhd$ is \emph{self-distributive in the generalized sense} (abbreviated as GSD):
$$\lhd^2 = \lhd \circ (\lhd\otimes\lhd) \circ c_2\circ  \Delta_3:V^{\otimes 3} \longrightarrow V;$$
\item the two morphisms are \emph{compatible in the braided bialgebra sense}:
\begin{equation}\label{eqn:BialgCompat}
\Delta \circ \lhd = (\lhd\otimes\lhd) \circ c_2\circ (\Delta\otimes\Delta):V^{\otimes 2}\longrightarrow V^{\otimes 2}.
\end{equation}
\end{enumerate}
A shelf $V$ is called \emph{a rack} in $\C$ if moreover it is endowed with
\begin{enumerate}
\item a right counit $ \varepsilon :V\longrightarrow \II,$ i.e.
$$\varepsilon_2\circ \Delta = \Id_V:V\longrightarrow V,$$
\item a morphism $\wlhd:V\otimes V\longrightarrow V$ which is the ``twisted inverse" of $\lhd$:
$$\wlhd \circ \lhd_1 \circ c_2\circ \Delta_2  = \lhd \circ \wlhd_1\circ c_2\circ \Delta_2 =\Id_V\otimes\varepsilon :V^{\otimes 2}\longrightarrow V.$$
\end{enumerate}
The described structures are called \emph{GSD structures} for brevity.
\end{definition}

Note that the usual cocommutativity implies the weak one; we prefer keeping our weaker condition in order to allow non-cocommutative examples in the next section. 

Our definition is designed for a generalized version of lemma \ref{thm:shelf} to hold:

\begin{proposition}\label{thm:shelf_gen}
A shelf $(V,\Delta,\lhd)$ in a symmetric category $(\C,\otimes,\II,c)$ admits a weak braiding
$$\sigma=\sigma_{V,\Delta,\lhd}:=\lhd_2\circ c_1 \circ \Delta_2.$$
This braiding is specific if $V$ is moreover a rack, the inverse given by 
$$\sigma^{-1}= \wlhd_1 \circ c_2\circ c_1\circ c_2\circ \Delta_1.$$
\end{proposition}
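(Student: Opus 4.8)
The plan is to verify the two assertions separately, in each case reducing to the defining axioms of a GSD structure through a string-diagram computation; drawing the diagrams (a comultiplication node $\Delta$, a crossing $c$ and a $\lhd$-node for each copy of $\sigma$) makes the bookkeeping manageable. For the first assertion I must check that $\sigma := \lhd_2 \circ c_1 \circ \Delta_2$ satisfies the Yang--Baxter equation \eqref{eqn:YB}, that is $\sigma_1 \circ \sigma_2 \circ \sigma_1 = \sigma_2 \circ \sigma_1 \circ \sigma_2$ as morphisms $V^{\otimes 3} \to V^{\otimes 3}$. The guiding intuition is the set case, where $\sigma(a,b) = (b, a\lhd b)$ and both composites send $(a,b,c)$ to a triple whose first two entries, namely $c$ and $b\lhd c$, agree automatically, while the third entries are $(a\lhd b)\lhd c$ and $(a\lhd c)\lhd(b\lhd c)$ respectively; their equality is exactly self-distributivity. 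The categorical proof realizes this same three-component comparison, with $\Delta$ playing the role of the diagonal.

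First I would expand both composites entirely in terms of $\Delta$, $\lhd$ and $c$, and then normalize them. Coassociativity lets me rewrite every nested comultiplication into a standard fully-iterated form, while naturality \eqref{eqn:nat}, together with \eqref{eqn:br_cat}, \eqref{eqn:br_cat2} and symmetry \eqref{eqn:symm_cat} of $c$, lets me slide comultiplication and $\lhd$-nodes past crossings so that the two sides become directly comparable. The single genuinely non-formal step occurs on the right-hand composite: after applying $\sigma_2 \circ \sigma_1$ one must comultiply an element already built by $\lhd$, and this is precisely what the bialgebra compatibility \eqref{eqn:BialgCompat}, namely $\Delta \circ \lhd = (\lhd\otimes\lhd)\circ c_2 \circ(\Delta\otimes\Delta)$, supplies. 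After this rewriting the first and second output strands match by coassociativity and weak cocommutativity, the latter being exactly what identifies the two distributed copies without full cocommutativity, and the remaining discrepancy collapses to the GSD axiom $\lhd^2 = \lhd\circ(\lhd\otimes\lhd)\circ c_2 \circ\Delta_3$. I expect the main obstacle to be the combinatorial control of the proliferating comultiplied copies and the crossings they must traverse: one has to be scrupulous about which copy of each input feeds which $\lhd$-node, and to confirm that weak cocommutativity, rather than the stronger cocommutativity, genuinely suffices at the one place where copies get interchanged.

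For the second assertion I would show that $\sigma^{-1} := \wlhd_1 \circ c_2 \circ c_1 \circ c_2 \circ \Delta_1$ is a two-sided inverse by computing $\sigma \circ \sigma^{-1}$ and $\sigma^{-1} \circ \sigma$ and checking that each equals $\Id_{V\otimes V}$; the set-case sanity check $\sigma^{-1}(a,b) = (b\wlhd a, a)$ confirms the formula. Composing the two expressions, I would first use coassociativity and weak cocommutativity to align the comultiplication $\Delta_1$ coming from $\sigma^{-1}$ with the $\Delta_2$ coming from $\sigma$, and naturality of $c$, together with $c\circ c = \Id$, to cancel the adjacent crossings. This places a $\lhd$-node next to a $\wlhd$-node sharing a common comultiplied strand, whereupon the twisted-inverse axiom $\wlhd\circ\lhd_1\circ c_2\circ\Delta_2 = \lhd\circ\wlhd_1\circ c_2\circ\Delta_2 = \Id_V\otimes\varepsilon$ collapses the pair, leaving one superfluous comultiplied copy that the right counit axiom $\varepsilon_2\circ\Delta = \Id_V$ then removes, yielding $\Id_{V\otimes V}$. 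The chief care here is orienting the crossings so that the composite $c_2\circ c_1\circ c_2$ presents $\wlhd$ with its arguments in exactly the configuration demanded by the twisted-inverse axiom; this is routine once the diagrams are drawn but easy to get wrong symbolically.
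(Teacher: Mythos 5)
Your proposal is correct and is precisely the direct diagrammatic verification the paper has in mind (the paper omits the details, remarking only that they are "easy but lengthy" and best done graphically). You correctly locate where each axiom enters: the bialgebra compatibility to comultiply $b\lhd c$ on the right-hand composite of \eqref{eqn:YB}, the GSD axiom for the third output strand, weak cocommutativity to swap the two middle copies of the thrice-comultiplied third input, and the twisted-inverse plus right-counit axioms for $\sigma\circ\sigma^{-1}=\sigma^{-1}\circ\sigma=\Id_{V\otimes V}$.
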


The verifications are easy but lengthy, so they are not given here. Diagrammatic proof is probably the least tiresome. Here is for instance the graphical form of the braiding:
\begin{center}
\begin{tikzpicture}[scale=0.8]
 \draw (1,0) -- (0,1);
 \draw (0,0) -- (1,1);
 \draw (0.75,0.25) -- (0.75,0.75);
 \node at (0.75,0.25) [right] {$\Delta$};
 \node at (0.75,0.75) [right] {$\lhd$};
 \node at (0.5,0.5) [left] {$c$}; 
 \fill[teal] (0.75,0.25) circle (0.1);
 \fill[teal] (0.75,0.75) circle (0.1);
\end{tikzpicture}
\end{center}

\begin{corollary}
In the settings of the previous proposition, the pair $(c_{V,V},\sigma_{V,\Delta,\lhd})$ gives a $VB_n^+$ or $VB_n$ action.
\end{corollary}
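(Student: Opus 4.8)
The plan is to recognize that this corollary is a purely formal consequence of Proposition \ref{thm:shelf_gen} combined with the categorification machinery already set up for $VB_n$ and $VB_n^+$. The genuine content — that the morphism $\sigma_{V,\Delta,\lhd}=\lhd_2\circ c_1 \circ \Delta_2$ satisfies the Yang--Baxter equation \eqref{eqn:YB}, and that it is invertible with the stated inverse when $V$ is a rack — is already packaged inside Proposition \ref{thm:shelf_gen}. So the work here is only to feed that output into the existing machine, and no fresh computation is required.

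First I would invoke Proposition \ref{thm:shelf_gen} directly: it asserts that $\sigma_{V,\Delta,\lhd}$ is a weak braiding on $V$ as soon as $(V,\Delta,\lhd)$ is a shelf in $\C$, and a \emph{specific} (i.e. invertible) braiding as soon as $V$ is moreover a rack. In the terminology fixed before Theorem \ref{thm:AlgCatVirtual}, this says precisely that the pair $(V,\sigma_{V,\Delta,\lhd})$ is a \emph{weakly braided object} (in the shelf case) or a \emph{braided object} (in the rack case) in the symmetric category $(\C,\otimes,\II,c)$. This is the single hypothesis that the subsequent corollaries consume.

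Second I would apply the appropriate categorification result. In the rack case, since $(V,\sigma_{V,\Delta,\lhd})$ is a bona fide braided object, Corollary \ref{crl:AlgCatVirtual} endows $V^{\otimes n}$ with an action of $VB_n$ via formulas \eqref{eqn:vbr_grp_cat} and \eqref{eqn:vbr_grp_cat2}, that is $\zeta_i\mapsto c_i$ and $\sigma_i^{\pm1}\mapsto (\sigma_{V,\Delta,\lhd})_i^{\pm1}$; in the language of Definition \ref{def:AlgCatVirtual} this is exactly the statement that the pair $(c_{V,V},\sigma_{V,\Delta,\lhd})$ \emph{defines a $VB_n$ action}. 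In the shelf case, where $\sigma_{V,\Delta,\lhd}$ need not be invertible, I would instead use the weak analogue, Theorem \ref{thm:AlgCatVirtualWeak}: a weakly braided object induces a monoid homomorphism out of $VB_n^+$, yielding the desired $VB_n^+$ action. Either way the conclusion is immediate once the braided-object status from Step~1 is in hand.

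Accordingly, I expect no real obstacle inside this corollary itself; the proof is a one-line specialization of Corollary \ref{crl:AlgCatVirtual} (respectively its weak version) to the braiding manufactured from the GSD data. The only place where verification actually occurs is Proposition \ref{thm:shelf_gen}, whose check of \eqref{eqn:YB} and of the inverse formula is the lengthy diagrammatic computation the authors deferred. The one point I would take care to confirm when writing this up is that the GSD axioms — coassociativity and weak cocommutativity of $\Delta$, generalized self-distributivity of $\lhd$, and the braided-bialgebra compatibility \eqref{eqn:BialgCompat} — are exactly the inputs that the Yang--Baxter verification for $\sigma_{V,\Delta,\lhd}$ requires, so that the hypotheses of Proposition \ref{thm:shelf_gen} line up and the present corollary follows with nothing further to prove.
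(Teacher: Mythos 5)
Your proposal is correct and matches the paper's (implicit) argument exactly: the corollary is stated without proof precisely because it is the immediate specialization of Corollary \ref{crl:AlgCatVirtual} (in the rack case) and its non-invertible analogue, Theorem \ref{thm:AlgCatVirtualWeak} (in the shelf case), to the (weakly) braided object produced by Proposition \ref{thm:shelf_gen}. Nothing further is needed.
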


\begin{remark}
One could start with proposition \ref{thm:shelf_gen} and ask oneself what conditions on morphisms $\Delta$ and $\lhd$ ensure that $\sigma_{V,\Delta,\lhd}$ is a braiding. In fact, the conditions from definition \ref{def:shelf_gen} are very far from being unique, unlike the ``if and only if" results from previous sections! We cite just two more of multiple examples here.
\begin{enumerate}
\item The weak cocommutativity can be transformed to
 $$c_2\circ c_3\circ \Delta^3 = \Delta^3:V\longrightarrow V^{\otimes 4},$$
and the GSD condition to
$$\lhd^2 = \lhd^2 \circ  c_2\circ \lhd_2 \circ  \Delta_3:V^{\otimes 3} \longrightarrow V.$$
Note that in the cocommutative case this coincides with the original definition.
\item The GSD condition can be substituted with the usual associativity, and the compatibility condition can be made \emph{Yetter-Drinfeld-like}: 
$$\lhd_2 \circ  c_2\circ \Delta_1 \circ \lhd_1 \circ  c_2 \circ\Delta_2 = (\lhd\otimes\lhd) \circ c_2\circ (\Delta\otimes\Delta):V^{\otimes 2}\longrightarrow V^{\otimes 2}.$$
\end{enumerate}
Morally, starting with a bialgebra structure, one should substitute either the compatibility condition with a ``Yetter-Drinfeld-like", or the associativity condition with the GSD one.

Observe that one could also modify the braiding from proposition \ref{thm:shelf_gen} by putting
$$\sigma:=\sigma'_{V,\Delta,\lhd}:=c \circ \lhd_1\circ \Delta_2,$$
which is different from $\sigma_{V,\Delta,\lhd}$ only in the non-cocommutative case.
Conditions similar to those for $\sigma_{V,\Delta,\lhd}$ guarantee that it is a braiding. This braiding makes the rack case less ``twisted".

The choices in definition \ref{def:shelf_gen} are motivated by the examples from the next section.
\end{remark}

\begin{remark}
The generalized self-distributivity can be efficiently expressed with the help of $\sigma$:
\begin{equation}
 \lhd^2 = \lhd^2 \circ \sigma_2.
\end{equation}
In other words, the right ``action" of $V$ on itself via $\lhd$ is ``$\sigma$-commutative".
\end{remark}

\medskip
Now let us move on to examples. The first one is naturally that of usual SD structures. Choose $\Set$ as the underlying symmetric category. Recall the diagonalization map \eqref{eqn:diag}. Further, for a set $S,$ denote by $\varepsilon_S$ the map from $S$ to $\II,$ unique since the one-element set $\II$ is a final object. One easily sees that $(S,\D_{S},\varepsilon_{S})$ is a counital cocommutative coalgebra in $\Set.$ This ensures some of the properties of definition \ref{def:shelf_gen}. Analyzing the remaining ones, one gets:

\begin{proposition}\label{thm:shelf_gen_usual}
\begin{enumerate}
\item The triple $(S,\D_{S},\lhd)$ is a shelf in the symmetric category $\Set$ if and only if $(S,\lhd)$ is a usual shelf.
\item The datum $(S,\D_{S},\varepsilon_{S},\lhd,\wlhd)$ is a rack in the symmetric category $\Set$ if and only if $(S,\lhd,\wlhd)$ is a usual rack.
\end{enumerate}
Moreover, for a shelf $({S},\lhd),$ the braiding $\sigma_{S,\D_{S},\lhd}$ from proposition \ref{thm:shelf_gen} coincides with $\sigma_{S,\lhd}$ from lemma \ref{thm:shelf}.
\end{proposition}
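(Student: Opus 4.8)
The plan is to exploit the fact that in $\Set$ everything is concrete: the tensor product is the Cartesian product, the braiding $c=\tau$ is the flip $(a,b)\mapsto(b,a)$, the comultiplication $\D_S$ is the diagonal $a\mapsto(a,a)$, and $\varepsilon_S$ is the unique map to the terminal object $\II$. So I would prove all three statements by unravelling each abstract condition of definition \ref{def:shelf_gen} through evaluation on a tuple of elements, using the conventions \eqref{eqn:phi_i}--\eqref{eqn:phi^i} to track which factors each morphism touches. The first observation is that the purely coalgebraic axioms are automatic: since $\D_S$ produces a constant tuple $(a,\ldots,a)$ after any number of iterations, and such a tuple is fixed by every factor-permutation, coassociativity $\Delta_1\circ\Delta=\Delta_2\circ\Delta$ and weak cocommutativity $c_2\circ\Delta^3=\Delta^3$ hold for free, as does the counit law $\varepsilon_2\circ\Delta=\Id_V$ (evaluating to $(a,\varepsilon_S(a))\cong a$ under $V\times\II\cong V$). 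Likewise the bialgebra compatibility \eqref{eqn:BialgCompat} imposes nothing, since both of its sides send $(a,b)$ to $(a\lhd b,a\lhd b)$ for \emph{any} binary operation. Hence the entire content of ``shelf / rack in $\Set$'' is carried by the GSD condition and the twisted-inverse condition.

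For part (1) I would compute the two sides of the GSD axiom $\lhd^2=\lhd\circ(\lhd\otimes\lhd)\circ c_2\circ\Delta_3$ on a triple $(a,b,c)$. The left side iterates $\lhd_1$ to give $(a\lhd b)\lhd c$; on the right, $\Delta_3$ duplicates the last entry to $(a,b,c,c)$, $c_2$ swaps the middle pair to $(a,c,b,c)$, $\lhd\otimes\lhd$ yields $(a\lhd c,b\lhd c)$, and the final $\lhd$ gives $(a\lhd c)\lhd(b\lhd c)$. Equating these is precisely \eqref{eqn:SelfDistr}, so $(S,\D_S,\lhd)$ is a shelf in $\Set$ iff $(S,\lhd)$ is a shelf.

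For part (2), with the coalgebra and counit axioms already automatic, it remains to read off the twisted-inverse axiom. Evaluating $\wlhd\circ\lhd_1\circ c_2\circ\Delta_2$ on $(a,b)$: $\Delta_2$ gives $(a,b,b)$, $c_2$ fixes it (both duplicated entries equal $b$), $\lhd_1$ gives $(a\lhd b,b)$, and $\wlhd$ produces $(a\lhd b)\wlhd b$, which the axiom equates with $(\Id_V\otimes\varepsilon_S)(a,b)=a$; symmetrically the other composite gives $(a\wlhd b)\lhd b=a$. These are exactly the two halves of \eqref{eqn:Rack}, so the rack axioms in $\Set$ reduce to the usual ones.

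For part (3) I would simply evaluate the braiding $\sigma_{S,\D_S,\lhd}=\lhd_2\circ c_1\circ\Delta_2$ of proposition \ref{thm:shelf_gen} on $(a,b)$: $\Delta_2$ gives $(a,b,b)$, $c_1$ swaps the first two entries to $(b,a,b)$, and $\lhd_2$ contracts the last two to $(b,a\lhd b)$, which is exactly $\sigma_{S,\lhd}$ of \eqref{eqn:RackBraid}. I do not expect any genuine obstacle here: the only real care is bookkeeping, namely correctly interpreting which factor each $\varphi_i$ acts on and that $\Delta^3=\Delta_1^{\circ 3}$, together with the identifications $V\times\II\cong V$ implicit in the counit conditions. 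Everything else is a one-line evaluation on elements.
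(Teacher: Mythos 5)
Your proof is correct and follows exactly the route the paper intends: it observes that $(S,\D_S,\varepsilon_S)$ is automatically a (weakly) cocommutative counital coalgebra in $\Set$ and that the bialgebra compatibility is vacuous, so only the GSD, twisted-inverse and braiding formulas need to be evaluated on elements, which you do correctly. The paper omits these computations ("one easily sees\ldots{} analyzing the remaining ones"), so your write-up simply supplies the same verification in detail.
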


Thus generalized self-distributivity includes the usual one. Examples from the next section show that the generalized notion is in fact much wider.

\subsection{Associative, Leibniz and Hopf algebras under the guise of shelves}	

Start with UAAs. The following result allows to see \textbf{associativity as a particular case of generalized self-distributivity}, for a suitably chosen comultiplication.

\begin{proposition}\label{thm:UAAasShelf}
Take an object $V$ in a monoidal category $(\C,\otimes,\II),$ equipped with two morphisms $\mu:V\otimes V\longrightarrow V$ and $\nu: \II \longrightarrow V,$ with $\nu$ being a right unit for $\mu$:
$$\mu \circ \nu_2 = \Id_V.$$
Put
$$\Delta_{Ass}:=\nu \otimes \Id_V.$$
Then the triple $(V,\Delta_{Ass},\mu)$ satisfies all the conditions from definition \ref{def:shelf_gen} but the GSD, which is equivalent to the associativity of $\mu.$

Moreover, for a UAA $(V,\mu,\nu),$ the braiding $\sigma_{V,\Delta_{Ass},\mu}$ from proposition \ref{thm:shelf_gen} coincides with $\sigma_{V,\mu}$ from theorem \ref{thm:BrForAlgebras}.
\end{proposition}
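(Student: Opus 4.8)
The plan is to verify the three shelf axioms of Definition~\ref{def:shelf_gen} for $(V,\Delta_{Ass},\mu)$ one at a time, tracking exactly where each hypothesis on $(\mu,\nu)$ is used, and then to compute the two braidings by hand. The single recurring tool is the behaviour of $c_{V,V}$ on factors produced by $\nu$: applying the naturality \eqref{eqn:nat} of $c$ to $f=\nu\colon\II\to V$ (and using $c_{\II,V}=c_{V,\II}=\Id$) yields
\begin{equation*}
c_{V,V}\circ(\nu\otimes\Id_V)=\Id_V\otimes\nu,\qquad
c_{V,V}\circ(\Id_V\otimes\nu)=\nu\otimes\Id_V,\qquad
c_{V,V}\circ(\nu\otimes\nu)=\nu\otimes\nu.
\end{equation*}
In words, $\Delta_{Ass}=\nu\otimes\Id_V$ merely inserts a unit on the left, and $c$ lets any such unit factor slide freely past a neighbour. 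Every computation below then reduces to pushing units around.

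First I would dispatch coassociativity and weak cocommutativity, which need no hypothesis on $\mu$. Since $\Delta_{Ass}$ inserts a unit on the left, both $\Delta_1\circ\Delta$ and $\Delta_2\circ\Delta$ equal $\nu\otimes\nu\otimes\Id_V$ by functoriality, giving coassociativity; iterating, $\Delta^3=\nu\otimes\nu\otimes\nu\otimes\Id_V$. The morphism $c_2$ then swaps two adjacent unit factors, which it fixes by the third identity above, so $c_2\circ\Delta^3=\Delta^3$. For the bialgebra compatibility \eqref{eqn:BialgCompat} I would expand the right-hand side $(\mu\otimes\mu)\circ c_2\circ(\Delta\otimes\Delta)$: on $a\otimes b$ it gives first $\one\otimes a\otimes\one\otimes b$, then (using $c_{V,V}\circ(\Id_V\otimes\nu)=\nu\otimes\Id_V$ inside $c_2$) $\one\otimes\one\otimes a\otimes b$, and finally, collapsing the first $\mu$ via the right-unit relation $\mu\circ\nu_2=\Id_V$, the value $\one\otimes\mu(a\otimes b)$, which is exactly $\Delta\circ\mu=\nu\otimes\mu$. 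Thus compatibility holds using only that $\nu$ is a right unit.

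The heart of the statement is GSD versus associativity. Here I would compute $\lhd^2=\mu\circ(\mu\otimes\Id_V)$ straight from the conventions \eqref{eqn:phi_i}--\eqref{eqn:phi^i}, and separately reduce the right-hand side $\lhd\circ(\lhd\otimes\lhd)\circ c_2\circ\Delta_3$. In this expression $\Delta_3$ inserts a unit just before the last factor, $c_2$ moves it one slot to the left (again by the unit--braiding identity), and one of the two inner copies of $\mu$ absorbs it via $\mu\circ\nu_2=\Id_V$; the net effect is $\mu\circ(\Id_V\otimes\mu)$. Hence the GSD equation reads precisely $\mu\circ(\mu\otimes\Id_V)=\mu\circ(\Id_V\otimes\mu)$, i.e. the associativity of $\mu$, which establishes the claimed equivalence.

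Finally, for the identification of braidings I would evaluate the formula of Proposition~\ref{thm:shelf_gen}, $\sigma_{V,\Delta_{Ass},\mu}=\lhd_2\circ c_1\circ\Delta_2$. Reading right to left, $\Delta_2$ sends $a\otimes b\mapsto a\otimes\one\otimes b$, $c_1$ turns this into $\one\otimes a\otimes b$ via $c_{V,V}\circ(\Id_V\otimes\nu)=\nu\otimes\Id_V$, and $\lhd_2=\Id_V\otimes\mu$ produces $\one\otimes\mu(a\otimes b)$, so $\sigma_{V,\Delta_{Ass},\mu}=\nu\otimes\mu=\sigma_{V,\mu}$ as in Theorem~\ref{thm:BrForAlgebras}. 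No step presents a genuine obstacle: the proof is bookkeeping with the indices of \eqref{eqn:phi_i}--\eqref{eqn:phi^i}. The only place demanding care — and the one spot where an error could slip in unnoticed — is the justification of the three unit--braiding identities from naturality alone, rather than tacitly assuming them; granting these, everything follows by sliding units through $c$ and invoking the right-unit relation exactly once in each of the compatibility and GSD computations.
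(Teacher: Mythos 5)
Your verification is correct and supplies exactly the routine computation the paper omits (the paper states this proposition without proof): each axiom of Definition~\ref{def:shelf_gen} reduces, as you show, to sliding $\nu$-created factors through $c$ via naturality and absorbing one of them with the right-unit axiom, and the GSD condition collapses to $\mu\circ(\mu\otimes\Id_V)=\mu\circ(\Id_V\otimes\mu)$ while the braiding formula collapses to $\nu\otimes\mu$. The three unit--braiding identities you isolate are correctly derived from \eqref{eqn:nat} together with $c_{\II,V}=c_{V,\II}=\Id_V$, so there is no gap.
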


In $\kVect,$ the comultiplication $\Delta_{Ass}$ becomes simply
$$\Delta_{Ass}(v)=\one \otimes v \qquad \forall v \in V.$$

This example is somewhat exotic. It explains why we were quite demanding in choosing the conditions in definition \ref{def:shelf_gen}. In particular,
\begin{itemize}
\item $\Delta_{Ass}$ is cocommutative in the weak but not in the usual sense;
\item $\Delta_{Ass}$ admits only left counits in general;
\item $(V,\Delta_{Ass},\mu)$ is not a rack in general;
\item the braiding $\sigma_{\Delta_{Ass},\mu}$ is not invertible in general.
\end{itemize}

\medskip
The case of ULAs is somewhat trickier. A natural candidate for comultiplication is 
$$\Delta=\nu \otimes \Id_V+ \Id_V\otimes \nu,$$
but to recover the Leibniz condition \eqref{eqn:Lei} as the GSD one, one wants the ``right multiplication by one" (i.e. $[,]\circ \nu_2: V\longrightarrow V$) to be identity and not zero, as the definition of a ULA imposes. A standard solution is to start with a (not necessarily unital) Leibniz algebra $V$ and to introduce a ``formal unit", i.e. to work in $V\oplus \II.$ One needs an additive category for the direct sum to be defined. To simplify notations, we work in $\kVect$ here. This ``unit problem" turns out to be the only obstacle to interpreting Leibniz algebras via GSD, as the next result witnesses.

Take an $\kk$-module $V$ equipped with a bilinear operation $[,].$ Put $\oV:=V\oplus \kk\one.$ Define a comultiplication $\Delta_{Lei}$ and a counit $\varepsilon_{Lei}$ on $\oV$ by
\begin{align*}
\Delta_{Lei}(v)&:= v\otimes \one + \one\otimes v, & \varepsilon_{Lei}(v)&:=0 \qquad \forall v\in V,\\
\Delta_{Lei}(\one)&:= \one \otimes \one, & \varepsilon_{Lei}(\one)&:= 1,
\end{align*}
 and binary operations $\lhd_{Lei},\wlhd_{Lei}$ on $\oV$ by
\begin{align*}
v\lhd_{Lei} w=-v\wlhd_{Lei} w&:= [v,w] \qquad \forall v,w\in V,\\
v\lhd_{Lei} \one = v\wlhd_{Lei} \one&:= v \qquad \qquad \forall v\in \oV,\\
\one \lhd_{Lei} v=\one\wlhd_{Lei} v &:= 0 \qquad \qquad \forall v\in V.
\end{align*}

\begin{proposition}\label{thm:LAasShelf}

The datum $(\oV,\Delta_{Lei},\varepsilon_{Lei},\lhd_{Lei},\wlhd_{Lei})$ satisfies all the conditions from definition \ref{def:shelf_gen} but the GSD, which is equivalent to the Leibniz condition for $[,].$

Moreover, for a Leibniz algebra $(V,[,]),$ the braiding $\sigma_{V,\Delta_{Lei},\lhd_{Lei}}$ on $\oV$ from proposition \ref{thm:shelf_gen} coincides with $\sigma_{\oV,[,]}$ from theorem \ref{thm:BrForAlgebras}, where $[,]$ is extended to $\oV$ by declaring $\one$ a Lie unit.
\end{proposition}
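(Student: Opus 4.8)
The plan is to check the conditions of definition \ref{def:shelf_gen} one at a time. All the structure morphisms on $\oV = V\oplus\kk\one$ are $\kk$-linear and the composite identities to be verified are multilinear in their inputs, so it suffices to evaluate each identity on arguments that are individually either a general element $v\in V$ or the formal unit $\one$. This separates a single genuinely bilinear case, with all inputs in $V$, from degenerate cases in which at least one input equals $\one$. I would dispose of the coalgebra axioms first: $\Delta_{Lei}$ is primitive on $V$ and grouplike on $\one$, so coassociativity $\Delta_1\circ\Delta=\Delta_2\circ\Delta$ and the right counit axiom $\varepsilon_2\circ\Delta=\Id_V$ are immediate. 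Moreover $\Delta_{Lei}$ is visibly invariant under the flip $c=\tau$, hence genuinely cocommutative, so the weak cocommutativity $c_2\circ\Delta^3=\Delta^3$ holds a fortiori; this is precisely the point where the Leibniz example behaves better than the associative comultiplication $\Delta_{Ass}$ of proposition \ref{thm:UAAasShelf}, which is only weakly cocommutative.

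Next I would treat the bialgebra compatibility \eqref{eqn:BialgCompat} by expanding $(\lhd\otimes\lhd)\circ c_2\circ(\Delta\otimes\Delta)$ on $a\otimes b$. The four resulting summands contain the ``cross'' products $\one\lhd v$ and $v\lhd\one$, and the relations $\one\lhd v=0$ and $v\lhd\one=v$ annihilate exactly the unwanted terms, leaving $\Delta(a\lhd b)$; the only computation with genuine content is $a,b\in V$, where both sides equal $[a,b]\otimes\one+\one\otimes[a,b]$. For the rack data the right counit is already in hand, and the twisted-inverse identity $\wlhd\circ\lhd_1\circ c_2\circ\Delta_2=\Id_V\otimes\varepsilon$ is checked the same way: for $b\in V$ the two surviving terms are $a\wlhd b=-[a,b]$ and $[a,b]\wlhd\one=[a,b]$, whose sum is $0=a\,\varepsilon(b)$, while for $b=\one$ one recovers $a$. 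The companion identity with $\lhd$ and $\wlhd$ interchanged follows identically, the cancellation resting on $\wlhd=-\lhd$ on $V\otimes V$.

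The crux of the proof, and the only place where the Leibniz condition enters, is the equivalence of the GSD axiom with \eqref{eqn:Lei}. Here I would compute both sides of
$$\lhd^2 = \lhd\circ(\lhd\otimes\lhd)\circ c_2\circ\Delta_3$$
on $a\otimes b\otimes c$. The left-hand side is $(a\lhd b)\lhd c$; expanding $\Delta_3$ on the third factor and applying $c_2$, then $\lhd\otimes\lhd$, then $\lhd$, turns the right-hand side into $\bigl((a\lhd c)\lhd(b\lhd\one)\bigr)+\bigl((a\lhd\one)\lhd(b\lhd c)\bigr)$. Specialising to $a,b,c\in V$ reduces the identity to
$$[[a,b],c]=[[a,c],b]+[a,[b,c]],$$
which is exactly \eqref{eqn:Lei}; conversely \eqref{eqn:Lei} returns the GSD identity on all of $V^{\otimes 3}$. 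I expect this to be the main obstacle -- not because it is deep, but because one must also confirm that every degenerate case (one, two, or three of the arguments equal to $\one$) holds automatically, independently of $[,]$. These checks lean repeatedly on $\one\lhd v=0$, $v\lhd\one=v$ and $\one\lhd\one=\one$, and on the bracket extended to $\oV$ with $\one$ a Lie unit, so that $[\one,v]=[v,\one]=0$.

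Finally I would identify the braidings by evaluating $\sigma_{V,\Delta_{Lei},\lhd_{Lei}}=\lhd_2\circ c_1\circ\Delta_2$ from proposition \ref{thm:shelf_gen} on $a\otimes b$: pushing $a\otimes\Delta(b)$ through $c_1$ and then $\lhd_2$ yields $b\otimes a+\one\otimes[a,b]$ for $a,b\in V$, while the degenerate cases collapse to the flip $b\otimes a$ once the term $\one\otimes[a,b]$ vanishes via $[\one,\cdot]=[\cdot,\one]=0$. This is precisely the value of $\sigma_{\oV,[,]}=c_{\oV,\oV}+\nu\otimes[,]$ from \eqref{eqn:sigmaULA} under the convention that $\one$ is a Lie unit, which completes the identification.
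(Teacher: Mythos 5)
Your proposal is correct and follows exactly the route the paper intends (the paper omits this verification as routine): a direct linearity-based case split into arguments in $V$ versus $\one$, with the Leibniz identity emerging from the single all-$V$ case of the GSD axiom and every degenerate case holding automatically from the rules $\one\lhd v=0,$ $v\lhd\one=v,$ $\one\lhd\one=\one.$ The only tiny imprecision is attributing the degenerate GSD checks partly to the extended bracket $[\cdot,\one]=0$ -- those checks use only $\lhd_{Lei}$ (for which $v\lhd_{Lei}\one=v\neq 0$), the Lie-unital extension being needed solely for the final identification with $\sigma_{\oV,[,]}$ -- but this does not affect the validity of the argument.
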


The GSD structure found here turns out to be the same as in \cite{CatSelfDistr}.

Note that the map $\lhd_{Lei}$ is neither Leibniz nor anti-symmetric due to its definition for $\one.$ The advantage of working with ULAs in section \ref{sec:algebras} was that one always stayed within the Leibniz world. Another advantage was a simple compact formula $\sigma_{V,[,]}= c+ \nu \otimes [,]$ for the braiding, whereas its analogue here $\sigma_{\Delta_{Lei},\lhd_{Lei}}= c+ \nu \otimes \lhd_{Lei}$ is false for elements of the form $v \otimes \one.$ 

Working in other additive categories ($\kVectGrad$ for example), one treats the case of Leibniz super- and color algebras, as well as other structures. 

\medskip

\begin{remark}
In the two preceding examples, it is the particular choice of the comultiplication that dictated the nature of the multiplicative structure (associative or Leibniz algebra). For usual shelves this ``control" is even stronger. Concretely, take the linearization $\kk S$ of a set $S,$ where $\kk$ is a commutative ring without zero divisors. Consider the comultiplication on $\kk S$ which is the linearization of $\D_S$ on $S.$ Suppose that, together with a multiplication $\lhd,$ it endows $\kk S$ with a shelf structure in $\kVect.$ From the compatibility condition \eqref{eqn:BialgCompat} one easily deduces that, for $a,b\in S,$ their product $a \lhd b$ is either zero or an element of $S.$ Thus $\lhd$ ``almost comes from a shelf structure on $S$". This is a generalization of lemma 3.8 from \cite{CatSelfDistr}.

Another example - that of the \emph{trigonometric coalgebra} $T= \CC a \oplus \CC b$ with
\begin{align*}
\Delta(a)&=a\otimes a - b\otimes b,\\
\Delta(b)&=a\otimes b + b\otimes a,
\end{align*}
was considered in \cite{CatSelfDistr}. But the elements $x=a+\imath b, y=a-\imath b$ being group-like (i.e. $\Delta(x)=x \otimes x,$ $\Delta(y)=y \otimes y$), all the GSD structures with trigonometric $\Delta$ are isomorphic to GSD structures with linearized diagonal $\Delta.$ In particular, lemma 3.9 from \cite{CatSelfDistr} is just a reformulation of their lemma 3.8.
\end{remark}

\medskip
The last example, also studied in \cite{CatSelfDistr}, is that of a Hopf algebra.

\begin{proposition}\label{thm:HAasShelf}
Let $(H,\mu,\Delta,\nu,\varepsilon,S)$ be a cocommutative Hopf algebra in a symmetric category $(\C,\otimes,\II,c).$ Define
$$\lhd_H = \mu^2\circ S_1 \circ c_1 \circ \Delta_2: H\otimes H\longrightarrow H,$$
$$\wlhd_H =\lhd_H\circ S^{-1}_2 : H\otimes H\longrightarrow H.$$
The datum $(H,\Delta,\varepsilon,\lhd_{H},\wlhd_{H})$ satisfies all the conditions from definition \ref{def:shelf_gen}. Proposition \ref{thm:shelf_gen} then endows $H$ with a specific braiding.
\end{proposition}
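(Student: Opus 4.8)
The plan is to recognise $\lhd_H$ as the categorical left adjoint action of $H$ on itself --- in Sweedler-type notation, valid in any symmetric category, $a \lhd_H b = S(b_{(1)})\, a\, b_{(2)}$ --- so that the statement becomes the categorical incarnation of the classical fact that conjugation turns a group (here a cocommutative Hopf algebra) into a quandle. First I would dispose of the structural axioms of Definition~\ref{def:shelf_gen}: coassociativity of $\Delta$ and the right counit axiom $\varepsilon_2\circ\Delta=\Id_V$ are part of the Hopf algebra data, while weak cocommutativity $c_2\circ\Delta^3=\Delta^3$ follows from full cocommutativity $c\circ\Delta=\Delta$ together with coassociativity, since any adjacent transposition in the iterated coproduct then acts trivially. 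This reduces the proof to three genuine identities: the bialgebra compatibility, the rack (inverse) axioms, and the generalized self-distributivity.

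Throughout I would argue diagrammatically, as the paper recommends, using only the Hopf algebra axioms: associativity of $\mu$, coassociativity of $\Delta$, the bialgebra axiom that $\Delta$ is an algebra morphism, the two antipode axioms $\mu\circ(S\otimes\Id)\circ\Delta = \nu\circ\varepsilon = \mu\circ(\Id\otimes S)\circ\Delta$, the anti-homomorphism property $S\circ\mu = \mu\circ c\circ(S\otimes S)$, and --- crucially in the cocommutative setting --- the comultiplicativity of $S$ together with the identity $S^2=\Id$. The last point also legitimises $\wlhd_H$: since $H$ is cocommutative, $S$ is involutive, so $S^{-1}=S$ exists and $\wlhd_H=\lhd_H\circ S_2$.

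For the bialgebra compatibility $\Delta\circ\lhd_H = (\lhd_H\otimes\lhd_H)\circ c_2\circ(\Delta\otimes\Delta)$, I would show directly that $\lhd_H$ is a coalgebra morphism: push $\Delta$ past $\mu^2$ using that $\Delta$ is an algebra map, push it past $S_1$ using the comultiplicativity of $S$, and then realign the four resulting tensor legs with coassociativity and cocommutativity until they match the right-hand side leg by leg. For the rack axioms I would rewrite $\wlhd_H=\lhd_H\circ S_2$ and collapse the composite $\wlhd_H\circ(\lhd_H)_1\circ c_2\circ\Delta_2$: the duplicated second argument produces a factor of the form $S(b_{(1)})\cdots b_{(2)}$ which the antipode axiom $\mu\circ(S\otimes\Id)\circ\Delta=\nu\circ\varepsilon$ contracts to $\Id\otimes\varepsilon$; the opposite composite is handled symmetrically via the other antipode axiom.

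The hard part will be the generalized self-distributivity $\lhd_H^2 = \lhd_H\circ(\lhd_H\otimes\lhd_H)\circ c_2\circ\Delta_3$. Both sides expand into long words in $\mu$, $S$, $\Delta$ and the braiding, and matching them forces repeated use of the anti-homomorphism property of $S$ (to move antipodes across products) together with coassociativity and cocommutativity to realign the coproduct legs of the thrice-duplicated third argument. The underlying identity is exactly the self-distributivity of conjugation --- in the group-like case $(a\lhd b)\lhd c = c^{-1}b^{-1}abc$ reorganises into $(a\lhd c)\lhd(b\lhd c)$ --- but carrying the full Sweedler and string-diagram bookkeeping in the symmetric-categorical setting is where essentially all of the work lies. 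Every other axiom is routine once the adjoint-action interpretation is in hand, and Proposition~\ref{thm:shelf_gen} then supplies the specific braiding automatically.
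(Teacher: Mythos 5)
Your proposal is correct and takes essentially the route the paper intends: the paper states this proposition without a written proof, only remarking that $\lhd_H$ is the adjoint action $x \lhd_H y = S(y_{(1)})x y_{(2)}$ and pointing to \cite{CatSelfDistr} and the Yetter--Drinfeld picture, and your outline (weak cocommutativity from full cocommutativity, bialgebra compatibility as ``$\lhd_H$ is a coalgebra morphism'', the rack axioms from the antipode identities with $S^{-1}=S$, and the GSD condition as the Hopf-algebraic form of self-distributivity of conjugation) is exactly the Sweedler/string-diagram verification that fills this in. The only caveat is that the GSD step is asserted rather than carried out, but the tools you name ($S\circ\mu=\mu\circ c\circ(S\otimes S)$, $S^2=\Id$, coassociativity and cocommutativity to reorder the legs of $c$, and the antipode axiom to contract adjacent pairs) do suffice to complete it.
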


In $\kVect,$ the definition of $\lhd_H$ is written, using Sweedler's notation, as
$$x \lhd_H y = S(y_{(1)})x y_{(2)},$$
$$x \wlhd_H y = y_{(2)}x S^{-1}(y_{(1)}),$$
which are the well-known adjoint maps. The braiding becomes
$$\sigma_{V,\Delta,\lhd_{H}}(x \otimes y) =y_{(1)}\otimes S(y_{(2)})x y_{(3)}.$$
This is precisely the braiding obtained by viewing $H$ as a Yetter-Drinfeld module over itself, cf. \cite{Wor}. 
It shows in particular that the cocommutativity condition is redundant, since it is not used in the Yetter-Drinfeld approach.

\section{A connection to the real world: (co)homologies}\label{sec:hom}

The aim of this section is to show that the existence of braidings described above is more than a mere coincidence. Namely, we recall here the (co)homologies of braided objects (cf. \cite{Lebed1}), which give the familiar Koszul, rack, bar and Leibniz differentials in suitable settings. That theory is applied to GSD structures, braided via proposition \ref{thm:shelf_gen}. An interpretation in terms of (pre)simplicial objects is given. The reader is sent to Przytycki's papers (\cite{Prz1}) for a detailed study of such (pre)simplicial objects for usual SD structures and for concrete homology calculations, as well as for some evidence concerning the relation between homology theories of associative algebras and those of SD structures.

Note that another version of cohomologies of GSD structures was proposed in \cite{CatSelfDistr}. Their definition was inspired by the bialgebra cohomology and extension-deformation-obstruction ideas. The approach developed here is different. Our motivation is a direct generalization of rack and Chevalley-Eilenberg homologies, with potential applications to topology.

\subsection{Braiding + cut + comultiplication = weakly simplicial structure}

Start with recalling the main result of \cite{Lebed1}:
\begin{proposition}\label{thm:BraidedHom}
Let $(\C,\otimes,\II)$ be a preadditive monoidal category. For any weakly braided object $(V,\sigma_V)$ with an \emph{upper cut} $\epsilon,$ i.e. a morphism $\epsilon:V\rightarrow \II$  compatible with the braiding:
$$(\epsilon\otimes\epsilon)\circ \sigma_V = \epsilon\otimes\epsilon,$$
the families of morphisms 
$$({^\epsilon}\! d)_n := \sum_{i=1}^{n}(-1)^{i-1} \epsilon_1 \circ (\sigma_{V^{i-1},V}\otimes \Id_V^{n-i}) :V^{n}\rightarrow V^{n-1},$$
$$(d{^\epsilon})_n := \sum_{i=1}^{n}(-1)^{n-i} \epsilon_n \circ (\Id_V^{n-i}\otimes \sigma_{V,V^{i-1}}) :V^{n}\rightarrow V^{n-1},$$
where $\sigma$ is extended to other powers of $V$ by \eqref{eqn:br_powers},
define a \emph{bidifferential} for $V,$ in the sense that 
$$({^\epsilon}\! d)_{n-1}\circ({^\epsilon}\! d)_n = ({^\epsilon}\! d)_{n-1}\circ (d{^\epsilon})_n + (d{^\epsilon})_{n-1}\circ ({^\epsilon}\! d)_n = (d{^\epsilon})_{n-1}\circ (d{^\epsilon})_n= 0. $$
\end{proposition}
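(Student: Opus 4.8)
The plan is to write each of the two boundary families as an alternating sum of elementary ``partial faces'' and to deduce the three identities from \emph{pre-simplicial}-type relations among these faces, the vanishing then being formal. For $1\le i\le n$ I would set
$$\delta_i^{(n)} := \epsilon_1 \circ (\sigma_{V^{i-1},V}\otimes \Id_V^{n-i}):V^{n}\to V^{n-1}, \qquad \bar\delta_i^{(n)} := \epsilon_n \circ (\Id_V^{n-i}\otimes \sigma_{V,V^{i-1}}):V^{n}\to V^{n-1},$$
so that $({^\epsilon}\! d)_n=\sum_{i}(-1)^{i-1}\delta_i^{(n)}$ and $(d{^\epsilon})_n=\sum_{i}(-1)^{n-i}\bar\delta_i^{(n)}$. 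Morally, $\delta_i$ uses the hexagon extension \eqref{eqn:br_powers} of $\sigma_V$ to braid the $i$-th strand to the front and then cuts it with $\epsilon$, while $\bar\delta_i$ braids a strand to the back and cuts it there; the whole construction uses only the weak braiding $\sigma_V$ and the cut $\epsilon$, never invertibility, so it applies verbatim to weakly braided objects.

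The heart of the matter is the pre-simplicial identity
$$\delta_i^{(n-1)}\circ \delta_j^{(n)} = \delta_{j-1}^{(n-1)}\circ \delta_i^{(n)}, \qquad 1\le i<j\le n,$$
together with its back-cutting mirror for the $\bar\delta$'s. To prove it I would first expand the multi-strand braidings $\sigma_{V^{j-1},V}$ and $\sigma_{V^{i-1},V}$ into elementary factors $\sigma_V$ via \eqref{eqn:br_powers}, and then use the Yang--Baxter equation \eqref{eqn:YB} to slide the two ``braid-to-front'' operations past one another, so that on both sides it is exactly strands $i$ and $j$ that arrive in the first two positions. The two copies of $\epsilon$ then act on these two strands, and the braiding $\sigma_V$ still sitting between them is annihilated by the compatibility hypothesis $(\epsilon\otimes\epsilon)\circ\sigma_V=\epsilon\otimes\epsilon$, which is precisely what makes the order of the two cuts irrelevant. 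This is the step I expect to be the main obstacle: tracking strand positions through the hexagon decomposition and checking that the residual braiding between the two cut strands is exactly of the form killed by $\epsilon$-compatibility. A diagrammatic argument, as advertised in the text, is the most economical way to organise this bookkeeping.

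Granting these relations, the three conclusions are purely formal. Writing $({^\epsilon}\! d)_{n-1}\circ({^\epsilon}\! d)_n=\sum_{i,j}(-1)^{i+j}\delta_i^{(n-1)}\circ\delta_j^{(n)}$ and splitting the sum into the ranges $i<j$ and $i\ge j$, I would substitute the pre-simplicial identity in the first range and reindex by $j\mapsto j-1$; this turns it into the negative of the second range, so the total is zero. The identity $(d{^\epsilon})_{n-1}\circ(d{^\epsilon})_n=0$ follows identically from the mirror relations. For the mixed term I would establish a commutation
$$\delta_i^{(n-1)}\circ \bar\delta_j^{(n)}=\bar\delta_{j'}^{(n-1)}\circ \delta_{i'}^{(n)}$$
with suitably shifted indices $i',j'$: since a front-cut and a back-cut act on opposite ends of the tensor word, the braidings involved disentangle by \eqref{eqn:YB}, and one checks that the sign families $(-1)^{i-1}$ of $({^\epsilon}\! d)$ and $(-1)^{n-j}$ of $(d{^\epsilon})$ pair the matching cross terms with opposite signs, so that $({^\epsilon}\! d)_{n-1}\circ(d{^\epsilon})_n+(d{^\epsilon})_{n-1}\circ({^\epsilon}\! d)_n=0$. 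The sign matching in this last, mixed identity is the subtlest part of the otherwise routine cancellation.
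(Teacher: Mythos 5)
Your proposal is correct and is essentially the paper's own route: the paper recalls this proposition from \cite{Lebed1} and then proves the refined Theorem \ref{thm:BraidedSimplHom}, which establishes exactly the pre-bisimplicial relations among the partial faces $({^\epsilon}\!d)_{n;i}$ and $(d^\epsilon)_{n;i}$ you call $\delta_i$ and $\bar\delta_i$, using precisely the two ingredients you identify (the Yang--Baxter equation to reorder the braid words, and the cut compatibility $(\epsilon\otimes\epsilon)\circ\sigma_V=\epsilon\otimes\epsilon$ to absorb the residual crossing between the two cut strands), after which the bidifferential identities follow by the standard formal sign cancellation. The only refinement worth noting is that in the mixed relation the front-bound and back-bound strands cross exactly once and with the same orientation in both composites, so that case needs only \eqref{eqn:YB} and not the $\epsilon$-compatibility.
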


In \cite{Lebed1} these bidifferentials are interpreted in terms of quantum shuffles.

\begin{remark}
In \cite{HomologyYB}, a homology theory was developed for solutions $(S,\sigma)$ of the set-theoretic Yang-Baxter equation using combinatorial and geometric methods completely different from ours. Applications to virtual knot invariants were given. It can be checked that their differential on $S^{\times n}$ coincides with our ${^\epsilon}\!d - d{^\epsilon},$ where the underlying symmetric category is $\Set,$ where $\epsilon$ is the unique map from $S$ to $\II,$ and where the linearization functor $Lin: \Set \rightarrow \mathbf{Mod_\ZZ}$ is used in order to work in a preadditive category.
\end{remark}

\medskip
Here we need a stronger version of this result. First, recall the notion of simplicial objects in a category (cf. \cite{Cyclic} for details and \cite{Prz1} for weak versions; note that our definition is a shifted version of theirs, and our definition of bisimplicial objects is different from the usual one): 

\begin{definition}
Take a category $\C.$ Consider a family $C_n,\: n\ge 0$ of objects in $\C,$ equipped with morphisms $d_{n;i}:C_n\rightarrow C_{n-1}$ (and $d'_{n;i}:C_n\rightarrow C_{n-1}$ and/or $s_{n;i}:C_n\rightarrow C_{n+1}$ when necessary) with $1 \le i \le n,$ denoted simply by $d_i,d'_i,s_i$ when the subscript $n$ is clear from the context. This datum is called
\begin{itemize}
\item \emph{a presimplicial object} if
\begin{align}
d_{i} d_{j} &= d_{j-1} d_{i} & \forall 1\le i < j \le n;\label{eqn:simpl1}
\end{align}
\item \emph{a very weakly simplicial object} if moreover
\begin{align}
s_{i} s_{j} &= s_{j+1} s_{i} & \forall 1\le i \le j \le n,\label{eqn:simpl2}\\
d_{i} s_{j} &= s_{j-1} d_{i} & \forall 1\le i < j \le n,\label{eqn:simpl3}\\
d_{i} s_{j} &= s_{j} d_{i-1} & \forall 1\le j+1 < i \le n;\label{eqn:simpl4}
\end{align}
\item \emph{a weakly simplicial object} if moreover
\begin{align}
d_{i} s_{i} &= d_{i+1} s_{i} & \forall 1\le i \le n;\label{eqn:simpl5}
\end{align}
\item \emph{a simplicial object} if moreover
\begin{align}
d_{i} s_{i} &= \Id_{C_n}  & \forall 1\le i \le n;\label{eqn:simpl6}
\end{align}
\item \emph{a pre-bisimplicial object} if \eqref{eqn:simpl1} holds for the $d_i$'s, $d'_i$'s and their mixture:
\begin{align}
d_{i} d'_{j} &= d'_{j-1} d_{i} & \forall 1\le i < j \le n,\label{eqn:simpl1'}\\
d'_{i} d_{j} &= d_{j-1} d'_{i} & \forall 1\le i < j \le n;\label{eqn:simpl1''}
\end{align}
\item \emph{a (weak / very weak) bisimplicial object} if it is pre-bisimplicial, with both $(C_n,d_{n;i},s_{n;i})$ and $(C_n,d'_{n;i},s_{n;i})$ giving (weak / very weak) simplicial structures.
\end{itemize}
The omitted subscripts $n,n \pm 1$ are those which guarantee that the source of all the above mentioned morphisms is $C_n.$ The $d_i$'s and $s_i$'s are called \emph{face} (resp. \emph{degeneracy}) morphisms.
\end{definition}

Simplicial objects are interesting because of the following properties (see \cite{Cyclic} for most proofs):
\begin{enumerate}
\item if $\C$ is preadditive, then for any presimplicial object $(C_n,d_{n;i})$ in $\C,$
$$\partial_n:=\sum_{i=1}^n (-1)^{i-1} d_{n,i}$$
is a differential (called \emph{total differential}) for the family $(C_n)_{n\ge 0}$, and for any pre-bisimplicial object $(C_n,d_{n;i},d'_{n;i})$ in $\C,$ the differentials $\partial_n$ and 
$$\partial'_n:=\sum_{i=1}^n (-1)^{i-1} d'_{n,i}$$
give a bidifferential structure;
\item in $\C=\kVect,$ for any weakly simplicial object $(C_n,d_{n;i},s_{n;i})$ the complex $(C_n,\partial_n)$ contains \emph{the degenerate subcomplex}
$$D_n:=\sum_{i=1}^{n-1} s_{n-1;i}(C_{n-1});$$
\item if our object turns out to be simplicial, then the degenerate subcomplex is acyclic, hence $C_*$ is quasi-isomorphic to the \emph{normalized compex} $C_*/D_*.$
\end{enumerate}

Proposition \ref{thm:BraidedHom} contains a source of pre-bisimplicial objects. Let us add some notions useful for constructing degeneracies.

\begin{definition}
Take a monoidal category $\C.$ A weakly braided object $(V,\sigma=\sigma_V)$ endowed with a coassociative comultiplication $\Delta:V\rightarrow V\otimes V$  is called \emph{a braided coalgebra} if the structures are compatible:
\begin{align}
\Delta_2\circ\sigma &= \sigma_1 \circ \sigma_2 \circ\Delta_1:V^{\otimes 2} \rightarrow V^{\otimes 3},\label{eqn:BrCoalg}\\
\Delta_1\circ\sigma &= \sigma_2 \circ \sigma_1 \circ\Delta_2:V^{\otimes 2} \rightarrow V^{\otimes 3}.\label{eqn:BrCoalg'}
\end{align}
One talks about \emph{semi-braided coalgebras} if only \eqref{eqn:BrCoalg} holds.
\end{definition}

The compatibilities are graphically depicted as 
  \begin{center}
\begin{tikzpicture}[scale=0.4]
 \draw[rounded corners](0,0) -- (2,2)-- (2,2.5);
 \draw (1,1) -- (1,2.5);
 \draw (1,0) -- (0.65,0.35);
 \draw[rounded corners](0.35,0.65) -- (0,1)-- (0,2.5);
 \fill[teal] (1,1) circle (0.2);
 \node at (3,1) {$=$};
\end{tikzpicture}
\begin{tikzpicture}[scale=0.4]
 \draw[rounded corners](0,-0.5) -- (0,0)-- (2,2);
 \draw[rounded corners](0,-0.5) -- (0,1)-- (1,2);
 \draw (1,-0.5) -- (1,0.7);
 \draw (0.8,1.2) -- (0.6,1.4);
 \draw (0,2) -- (0.35,1.65);
 \fill[teal] (0.05,0) circle (0.2);
\end{tikzpicture}
\begin{tikzpicture}[scale=0.4]
 \node at (-3,1) {};
 \draw[rounded corners](1,0) -- (2,1)-- (2,2.5);
 \draw (1,1) -- (1,2.5);
 \draw (2,0) -- (1.65,0.35);
 \draw[rounded corners](1.35,0.65) -- (0,2)-- (0,2.5);
 \fill[teal] (1,1) circle (0.2);
 \node at (3,1) {$=$};
\end{tikzpicture}
\begin{tikzpicture}[scale=0.4]
 \draw[rounded corners](0,-0.5) -- (0,0)-- (2,2);
 \draw[rounded corners](2,-0.5) -- (2,1)-- (1.65,1.35);
 \draw[rounded corners](2,-0.5) -- (2,0)-- (1.15,0.85);
 \draw[rounded corners](0,2)-- (0.85,1.15);
 \draw (1,2) -- (1.35,1.65);
 \fill[teal] (2,0) circle (0.2);
\end{tikzpicture}
 \end{center}

\begin{theorem}\label{thm:BraidedSimplHom}
\begin{enumerate}
\item 
Let $(\C,\otimes,\II)$ be a preadditive monoidal category. For any weakly braided object $(V,\sigma_V)$ with an {upper cut} $\epsilon,$ there is a pre-bisimplicial structure on $C_n:=V^n$ given by
$$({^\epsilon}\! d)_{n,i} := \epsilon_1 \circ (\sigma_{V^{i-1},V}\otimes \Id_V^{n-i}),$$
$$(d{^\epsilon})_{n,i} := \epsilon_n \circ (\Id_V^{i-1}\otimes \sigma_{V,V^{n-i}}).$$
The corresponding total differentials are precisely $({^\epsilon}\! d)_{n}$ and $(d{^\epsilon})_{n}$ respectively.
\item If a comultiplication $\Delta$ endows $(V,\sigma_V)$ with a braided coalgebra structure, then the morphisms
$$s_{n,i} := \Delta_i$$
complete the preceding structure into a very weakly bisimplicial one.
\item If a comultiplication $\Delta$ endows $(V,\sigma_V)$ with a semi-braided coalgebra structure, then $(C_n,({^\epsilon}\! d)_{n,i},s_{n,i})$ is a very weakly simplicial object.
\item If $\Delta$ is moreover $\sigma$-cocommutative, i.e.
$$\sigma\circ\Delta = \Delta,$$
then the above structures on $V^n$ are weakly (bi)simplicial.
\end{enumerate}
\end{theorem}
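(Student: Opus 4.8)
The plan is to verify each simplicial identity from the definition directly at the level of the face and degeneracy morphisms, treating the four parts in increasing order of structure required. Throughout I work inside the monoidal subcategory of $\C$ generated by $V$ and $\sigma_V$; by Remark \ref{rmk:local} the extension \eqref{eqn:br_powers} makes $\sigma$ behave there like a genuine braiding, so the hexagon-type identities \eqref{eqn:br_cat}--\eqref{eqn:br_cat2} and the associated Yang--Baxter moves are available for the higher $\sigma_{V^a,V^b}$. Every verification is most transparent diagrammatically, a strand being ``braided to the front'' (for $({^\epsilon}\! d)$) or ``to the back'' (for $(d{^\epsilon})$) and then cut by $\epsilon$.

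For Part 1 the pre-bisimplicial relations \eqref{eqn:simpl1}, \eqref{eqn:simpl1'}, \eqref{eqn:simpl1''} split according to whether the two strands being removed lie in disjoint tensor factors or cross each other. When they are disjoint -- e.g.\ $({^\epsilon}\! d)_i(d{^\epsilon})_j$ with $i<j$, where the left-moving and right-moving strands never meet -- the identity is the interchange law of $\otimes$ together with the reindexing $j\mapsto j-1$. When they cross (the two same-sided relations, and $(d{^\epsilon})_i({^\epsilon}\! d)_j$ with $i<j$) the two strands are braided past one another in opposite orders on the two sides; here I reconcile the two braid words using the braid relations for the extended $\sigma$, and absorb the single remaining crossing between the two strands that are ultimately killed by $\epsilon$ using the cut-compatibility $(\epsilon\otimes\epsilon)\circ\sigma_V=\epsilon\otimes\epsilon$. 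These are the unsummed versions of the computations underlying Proposition \ref{thm:BraidedHom}. The identification of the total differentials with $({^\epsilon}\! d)_n$ and $(d{^\epsilon})_n$ is then immediate from the defining formulas.

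For Parts 2 and 3 I adjoin $s_{n,i}:=\Delta_i$. The relation \eqref{eqn:simpl2}, $s_is_j=s_{j+1}s_i$, reduces to coassociativity of $\Delta$ in the overlapping case $i=j$ and to interchange for $i<j$. The mixed face--degeneracy relations \eqref{eqn:simpl3}--\eqref{eqn:simpl4} again separate into a disjoint case, immediate by interchange and reindexing, and an overlapping case in which the braided strand must pass across the comultiplied pair $\Delta^{(1)}\otimes\Delta^{(2)}$; for each family exactly one of \eqref{eqn:simpl3}, \eqref{eqn:simpl4} is overlapping. This overlapping case is the crux: I slide $\Delta$ through the multi-strand braiding $\sigma_{V^{i-1},V}$ by decomposing that braiding into elementary crossings via \eqref{eqn:br_powers} and pushing $\Delta$ across one strand at a time, applying a single braided-coalgebra relation at each step and inducting on the number of strands crossed. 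The front-cutting family $({^\epsilon}\! d)$ needs only \eqref{eqn:BrCoalg}, yielding the very weakly simplicial structure of Part 3 under the semi-braided hypothesis; the back-cutting family $(d{^\epsilon})$ requires the mirror relation \eqref{eqn:BrCoalg'}, so that the full braided-coalgebra hypothesis gives the very weakly bisimplicial structure of Part 2.

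For Part 4 it remains to upgrade ``very weakly'' to ``weakly'', i.e.\ to establish \eqref{eqn:simpl5}, $d_is_i=d_{i+1}s_i$. Here the two sides differ only in whether the first or the second output of $\Delta$ at position $i$ is the strand carried off and cut; the extra crossing between these two outputs is exactly one application of $\sigma\circ\Delta$, which the $\sigma$-cocommutativity hypothesis $\sigma\circ\Delta=\Delta$ removes, collapsing one face map into the other. I expect the main obstacle to be precisely the overlapping face--degeneracy case of Parts 2--3: correctly bookkeeping indices while sliding $\Delta$ through $\sigma_{V^{i-1},V}$, and keeping straight which of \eqref{eqn:BrCoalg}, \eqref{eqn:BrCoalg'} applies to which family and with what orientation. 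Drawing the braid--comultiplication diagrams, as the text recommends, is the safest way to avoid index errors; once the single-crossing slide is set up, the multi-strand case is a routine induction.
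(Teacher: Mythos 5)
Your proposal is correct and follows essentially the same route as the paper's own (very terse) proof: Yang--Baxter moves plus the cut-compatibility for the pre-bisimplicial identities, coassociativity and the split between \eqref{eqn:BrCoalg} for the front-cutting family and \eqref{eqn:BrCoalg'} for the back-cutting family for the face--degeneracy relations, and $\sigma$-cocommutativity to collapse $d_is_i$ with $d_{i+1}s_i$. The only cosmetic remark is that no induction is really needed when sliding $\Delta$ past the moving strand --- only the two elementary crossings involving the $\Delta$-outputs are affected, and a single application of the relevant braided-coalgebra relation collapses them.
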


The face and degeneracy maps are graphically depicted as 

\begin{center}
\begin{tikzpicture}[scale=0.3]
 \draw (0,1) -- (0,-7);
 \node at (-4,-3) {$({^\epsilon}\! d)_{n;i}=$};
 \draw[thick,violet,rounded corners] (-1,0) -- (-1,-1) -- (-0.3,-1.7);
 \draw[thick,violet] (0.3,-2.3) -- (2.7,-4.7);
 \draw[thick,violet,rounded corners] (3.3,-5.3) -- (4,-6) -- (4,-7);
 \fill[violet] (-1,0) circle (0.1);
 \node at (-1,0)  [above] {$\epsilon$};
 \draw (3,1) -- (3,-7);
 \draw (5,1) -- (5,-7);
 \draw (8,1) -- (8,-7);
 \node at (0.3,-2.3) [ above right] {$\sigma$};
 \node at (3.3,-5.3) [above right] {$\sigma$};
 \node at (0,-7) [below] {$\scriptstyle 1$};
 \node at (1.5,-7) [below] {$\scriptstyle\ldots$};
 \node at (3,-7) [below] {${\scriptstyle i-1}$}; 
 \node at (4,-7) [below] {$\scriptstyle i$};
 \node at (5,-7) [below] {$\scriptstyle i+1$};
 \node at (6.5,-7) [below] {$\scriptstyle \ldots$};
 \node at (8,-7) [below] {$\scriptstyle{n}$}; 
\end{tikzpicture}
\begin{tikzpicture}[scale=0.3]
 \draw (0,1) -- (0,-7);
 \node at (-9,-3) {};
 \node at (-4,-3) {$(d^\epsilon)_{n;i}=$};
 \draw[thick,violet,rounded corners] (9,0) -- (9,-1) -- (4,-6) -- (4,-7);
 \fill[violet] (9,0) circle (0.1);
 \node at (9,0) [above] {$\epsilon$};
 \draw (3,1) -- (3,-7);
 \draw (5,1) -- (5,-4.6);
 \draw (8,1) -- (8,-1.6);
 \draw (5,-5.4) -- (5,-7);
 \draw (8,-2.4) -- (8,-7);
 \node at (5,-5) [right] {$\sigma$};
 \node at (8,-2) [right] {$\sigma$};
 \node at (0,-7) [below] {$\scriptstyle 1$};
 \node at (1.5,-7) [below] {$\scriptstyle \ldots$};
 \node at (3,-7) [below] {$\scriptstyle{i-1}$}; 
 \node at (4,-7) [below] {$\scriptstyle i$};
 \node at (5,-7) [below] {$\scriptstyle i+1$};
 \node at (6.5,-7) [below] {$\scriptstyle\ldots$};
 \node at (8,-7) [below] {$\scriptstyle{n}$}; 
\end{tikzpicture}
\end{center}
\begin{center}
\begin{tikzpicture}[scale=0.3]
 \node at (-7,1) {};
 \node at (-4,1) {$s_{n;i}=$};
 \draw (-1,0) -- (-1,2);
 \draw (0,0) -- (0,2);
 \draw (4,0) -- (4,2);
 \draw (3,0) -- (3,2);
 \draw[thick,violet,rounded corners] (1.5,0) -- (1.5,1) -- (1,1.5) -- (1,2);
 \draw[thick,violet,rounded corners] (1.5,1) -- (2,1.5) -- (2,2);
 \fill[teal] (1.5,1) circle (0.2);
 \node at (1.5,1) [right] {$\scriptstyle\Delta$};
 \node at (1.5,0) [below] {$\scriptstyle i$};
\end{tikzpicture}
\end{center}

\begin{proof}
\begin{enumerate}
\item Use Yang-Baxter equation and the definition of a cut.
\item The assertion for the $({^\epsilon}\! d)_{n;i}$'s follows from the first part \eqref{eqn:BrCoalg} of the compatibility condition for the braiding and the comultiplication, while the assertion for the $(d^\epsilon)_{n;i}$'s follows from the second part \eqref{eqn:BrCoalg'}.
\end{enumerate}
\end{proof}

\begin{remark}
When checking the axioms of different types of simplicial structures in this theorem, one can get rid of tiresome index chasing by reasoning in terms of strands. For example, pulling a strand to the left commutes with applying the branching $\Delta$ to another strand if a strand can pass under a branching.
\end{remark}

\subsection{Categorical spindle as a cocommutative braided coalgebra}
Now, using theorem \ref{thm:BraidedSimplHom}, we look for different kinds of simplicial structures for a shelf $(V,\Delta,\lhd)$ in a symmetric category $(\C,\otimes,\II,c),$ generalizing the results established for usual SD structures (cf. \cite{Prz1}).

Endow $V$ with the braiding from proposition \ref{thm:shelf_gen}. An important ingredient missing for applying the above theorem is an upper cut. Here are nice candidates:
\begin{definition}
A \emph{character} for a shelf $(V,\Delta,\lhd)$ in $\C$ is a morphism $\epsilon:V \rightarrow \II$ compatible with $\Delta$ and $\lhd$:
\begin{align*}
(\epsilon\otimes\epsilon)\circ \Delta = \epsilon: &\qquad V\longrightarrow \II,\\
\epsilon\circ\lhd = \epsilon\otimes\epsilon: &\qquad V\otimes V\longrightarrow \II.
\end{align*}
\end{definition}

One easily checks
\begin{lemma}
A character for a shelf $(V,\Delta,\lhd)$ in $\C$ is an upper cut for the weakly braided object $(V,\sigma_{V,\Delta,\lhd}).$ 
\end{lemma}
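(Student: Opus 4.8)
The plan is to unfold the definition $\sigma = \sigma_{V,\Delta,\lhd} = \lhd_2 \circ c_1 \circ \Delta_2$ from proposition \ref{thm:shelf_gen} and to simplify the composite $(\epsilon\otimes\epsilon)\circ\sigma$ by pushing the character $\epsilon$ through the three layers one at a time, from the outermost ($\lhd_2$) to the innermost ($\Delta_2$). The upper cut condition to verify is precisely $(\epsilon\otimes\epsilon)\circ\sigma = \epsilon\otimes\epsilon$, so the whole statement reduces to a short categorical computation using the two defining properties of a character together with one naturality fact about $c$.

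First I would post-compose $\epsilon\otimes\epsilon$ with $\lhd_2 = \Id_V\otimes\lhd : V^{\otimes 3}\to V^{\otimes 2}$. Since $\epsilon\otimes\epsilon$ acts on the two output factors separately and the second of these is the image of $\lhd$, the second character axiom $\epsilon\circ\lhd = \epsilon\otimes\epsilon$ turns the composite into $\epsilon\otimes\epsilon\otimes\epsilon : V^{\otimes 3}\to\II$; that is, $(\epsilon\otimes\epsilon)\circ\lhd_2 = \epsilon\otimes\epsilon\otimes\epsilon$.

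Next I would push $\epsilon\otimes\epsilon\otimes\epsilon$ through $c_1 = c_{V,V}\otimes\Id_V$. The key small observation, and the only place where the ambient symmetric structure intervenes, is that a pair of cuts absorbs the braiding: applying naturality \eqref{eqn:nat} with $f=g=\epsilon$ gives $c_{\II,\II}\circ(\epsilon\otimes\epsilon) = (\epsilon\otimes\epsilon)\circ c_{V,V}$, and since $c_{\II,\II} = \Id_\II$ this yields $(\epsilon\otimes\epsilon)\circ c_{V,V} = \epsilon\otimes\epsilon$. Consequently $(\epsilon\otimes\epsilon\otimes\epsilon)\circ c_1 = \epsilon\otimes\epsilon\otimes\epsilon$, the braiding simply disappearing. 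Finally, pre-composing with $\Delta_2 = \Id_V\otimes\Delta$ isolates $(\epsilon\otimes\epsilon)\circ\Delta$ on the second factor, which by the first character axiom equals $\epsilon$; this collapses the expression back to $\epsilon\otimes\epsilon$, completing the chain $(\epsilon\otimes\epsilon)\circ\sigma = \epsilon\otimes\epsilon$.

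There is no genuine obstacle here: the lemma is designed so that the three layers of $\sigma$ are peeled off by exactly the three available identities (the two character axioms and the absorption of $c$). The only points requiring a moment's care are the naturality argument absorbing $c_{V,V}$, and the bookkeeping of the index placements in $\lhd_2$, $c_1$, $\Delta_2$, so that each identity is applied to the correct tensor factors. A diagrammatic rendering, with the strand carrying the cut $\epsilon$ swallowing the crossing $c$ and then being fed through $\lhd$ and $\Delta$, makes the entire argument transparent.
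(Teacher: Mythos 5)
Your computation is correct and is exactly the verification the paper leaves to the reader (the lemma is stated with ``one easily checks'' and no written proof): peeling $\epsilon\otimes\epsilon$ through $\lhd_2$, $c_1$, $\Delta_2$ via the two character axioms and the naturality identity $(\epsilon\otimes\epsilon)\circ c_{V,V}=c_{\II,\II}\circ(\epsilon\otimes\epsilon)=\epsilon\otimes\epsilon$ is the intended argument. The index bookkeeping ($\Delta_2=\Id_V\otimes\Delta$, $c_1=c_{V,V}\otimes\Id_V$, $\lhd_2=\Id_V\otimes\lhd$) is handled correctly, so there is nothing to add.
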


Further, let us study the compatibility of the braiding $\sigma_{V,\Delta,\lhd}$  with the comultiplication $\Delta,$ in the braided coalgebra sense. For this, a categorical version of the notion of spindle is necessary. Recall notations \eqref{eqn:phi_i} and \eqref{eqn:phi^i}.

\begin{definition}
A shelf $(V,\Delta,\lhd)$ in a symmetric category $(\C,\otimes,\II,c)$ is called a \emph{spindle} if $\Delta$ is \emph{left-cocommutative}:
$$c_1\circ\Delta^2 =\Delta^2$$
and $\lhd$ is \emph{$\Delta$-idempotent}:
$$\lhd\circ\Delta = \Id_V.$$
\end{definition}
It is the graphical form \begin{tikzpicture}[scale=0.3]
 \draw (0,0) -- (0,0.5);
 \draw [rounded corners] (0,0.5) -- (0.5,1) -- (0,1.5);
 \draw [rounded corners] (0,0.5) -- (-0.5,1) -- (0,1.5);
 \draw (0,2) -- (0,1.5);
 \node at (1.2,1) {$\scriptstyle{=}$};
 \draw (2,2) -- (2,0);
 \node at (2.5,1) {};
\end{tikzpicture} of the last condition that explains the term. It was coined by Alissa Crans in \cite{Crans}, as well as the term ``shelf".

Note that left cocommutativity is stronger than the weak one and weaker as the usual one.

\begin{proposition}\label{thm:SpindleBrCoalg}
Take a shelf $(V,\Delta,\lhd)$ in a symmetric category $(\C,\otimes,\II,c).$ The data $(V,\sigma=\sigma_{V,\Delta,\lhd},\Delta)$ defines a semi-braided coalgebra, $\sigma$-cocommutative if $V$ is a spindle.
\end{proposition}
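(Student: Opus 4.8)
The plan is to treat the two assertions in turn. Since Proposition \ref{thm:shelf_gen} already makes $\sigma=\sigma_{V,\Delta,\lhd}$ a weak braiding and $\Delta$ is coassociative by hypothesis, being a \emph{semi-braided} coalgebra amounts to the single compatibility \eqref{eqn:BrCoalg}, and $\sigma$-cocommutativity is the extra identity $\sigma\circ\Delta=\Delta$. Throughout I would reason in the graphical calculus for symmetric monoidal categories, picturing $\Delta$ as a branching node, $\lhd$ as a merging node, and $c$ as a self-inverse crossing; by coherence a morphism assembled from these pieces depends only on the underlying wiring, so nodes may be slid freely past crossings via naturality of $c$, and two diagrams with the same connectivity are equal.

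For \eqref{eqn:BrCoalg} I would start from the left-hand side $\Delta_2\circ\sigma=\Delta_2\circ\lhd_2\circ c_1\circ\Delta_2$ and apply the braided bialgebra compatibility \eqref{eqn:BialgCompat} to the topmost ``$\lhd$ then $\Delta$'' block, replacing $\Delta\circ\lhd$ by $(\lhd\otimes\lhd)\circ c_2\circ(\Delta\otimes\Delta)$. After this substitution the comultiplications landing on the right input regroup, by coassociativity, into one triple coproduct $\Delta^2$ of that strand, while the left input is comultiplied once; the resulting diagram is the triple coproduct of the right strand together with a single coproduct of the left strand, joined by two $\lhd$-merges. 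Expanding the right-hand side $\sigma_1\circ\sigma_2\circ\Delta_1$ and again invoking coassociativity yields precisely the same wiring: both sides reduce to $b_{(1)}\otimes(a_{(1)}\lhd b_{(2)})\otimes(a_{(2)}\lhd b_{(3)})$, where $\Delta(a)=a_{(1)}\otimes a_{(2)}$ and $(\Delta\otimes\Id_V)\circ\Delta(b)=b_{(1)}\otimes b_{(2)}\otimes b_{(3)}$. Only compatibility, coassociativity and naturality/symmetry of $c$ enter here; weak cocommutativity is \emph{not} needed. I would also point out that \eqref{eqn:BrCoalg'} fails for a general shelf — in the $\Set$ model its two sides are $(b,b,a\lhd b)$ and $(b,b,(a\lhd b)\lhd b)$ — which is exactly why one obtains only a semi-braided coalgebra.

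For the spindle case the computation is short and can be done symbolically:
\begin{align*}
\sigma\circ\Delta &=\lhd_2\circ c_1\circ\Delta_2\circ\Delta=\lhd_2\circ c_1\circ\Delta^2=\lhd_2\circ\Delta^2\\
&=\lhd_2\circ\Delta_2\circ\Delta=(\Id_V\otimes(\lhd\circ\Delta))\circ\Delta=\Delta,
\end{align*}
where the second and fourth equalities are coassociativity ($\Delta_2\circ\Delta=\Delta^2$), the third is left-cocommutativity $c_1\circ\Delta^2=\Delta^2$ (which absorbs the crossing), and the last uses $\Delta$-idempotency $\lhd\circ\Delta=\Id_V$ together with $\lhd_2\circ\Delta_2=\Id_V\otimes(\lhd\circ\Delta)$.

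The main obstacle I anticipate is not conceptual but bookkeeping in the first part: tracking which output leg of each $\Delta$ feeds which $\lhd$, and verifying that the crossings produced on the two sides realize the same permutation of legs so that coherence applies. I would therefore carry out \eqref{eqn:BrCoalg} diagrammatically rather than with Sweedler-type indices, checking the reduction once against the concrete model $\sigma(a,b)=(b,a\lhd b)$, $\Delta(a)=(a,a)$ in $\Set$ to confirm the common normal form before asserting the general identity; observing that no $b$–$b$ or $a$–$a$ crossing ever occurs is what guarantees that left-to-right leaf orders match and that weak cocommutativity can be dispensed with.
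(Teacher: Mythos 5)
Your proof is correct and follows the same route as the paper, whose own proof merely records that \eqref{eqn:BrCoalg} follows from the bialgebra-type compatibility \eqref{eqn:BialgCompat} and that $\sigma$-cocommutativity follows from the two spindle axioms; your computation fills in exactly the details the paper omits. Your observation that every crossing occurring in the reduction is of mixed type, so that only naturality of $c$ and coassociativity (and not any cocommutativity) are needed, is precisely the right justification for carrying out the Sweedler-style bookkeeping in a general symmetric category.
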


\begin{proof}
Compatibility relation \eqref{eqn:BrCoalg} follows from the bialgebra-type compatibility between $\Delta$ and $\lhd,$ and $\sigma$-cocommutativity from the two properties defining spindles.
\end{proof}

The additional conditions in the definition of a spindle turn out not to be too restrictive:
\begin{proposition}
The following GSD structures are spindles:
\begin{enumerate}
\item usual spindles in $\Set$;
\item UAAs (for which $\Delta$-idempotence is equivalent to $\nu$ being a left unit);
\item ULAs;
\item cocommutative Hopf algebras.
\end{enumerate}
\end{proposition}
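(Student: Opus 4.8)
The plan is to verify, for each of the four families, the two conditions beyond being a shelf that define a spindle: left-cocommutativity $c_1\circ\Delta^2=\Delta^2$ and $\Delta$-idempotence $\lhd\circ\Delta=\Id_V$. That the underlying data are shelves in the appropriate categories is exactly the content of propositions \ref{thm:shelf_gen_usual}, \ref{thm:UAAasShelf}, \ref{thm:LAasShelf} and \ref{thm:HAasShelf}, so only these two supplementary identities remain. I would dispatch left-cocommutativity uniformly first, then treat $\Delta$-idempotence case by case.

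For left-cocommutativity, recall (from the comment following the definition of a spindle) that usual cocommutativity implies the left one. This settles cases (1), (3) and (4) at once: the diagonal $\D_S$ is cocommutative; the map $\Delta_{Lei}(v)=v\otimes\one+\one\otimes v$ is $c$-symmetric because $\one$ is of degree $0$, so no sign obstructs this even in the graded variants; and in case (4) cocommutativity is part of the hypothesis. Only the associative case (2) needs a separate argument, since $\Delta_{Ass}=\nu\otimes\Id_V$ is not cocommutative. There $\Delta_{Ass}^2=\nu\otimes\nu\otimes\Id_V$, and naturality \eqref{eqn:nat} of $c$ gives $c_{V,V}\circ(\nu\otimes\nu)=(\nu\otimes\nu)\circ c_{\II,\II}=\nu\otimes\nu$; tensoring with $\Id_V$ then yields $c_1\circ\Delta_{Ass}^2=\Delta_{Ass}^2$.

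The $\Delta$-idempotence is immediate in cases (1)--(3). For a usual spindle, $\lhd\circ\D_S(a)=a\lhd a=a$ is precisely the idempotency axiom. For a UAA, $\mu\circ\Delta_{Ass}=\mu\circ(\nu\otimes\Id_V)=\Id_V$ holds because $\nu$ is a (two-sided, hence in particular left) unit---this is the parenthetical claim in the statement. For the Leibniz case one computes directly on $\oV$, using $v\lhd_{Lei}\one=v$ and $\one\lhd_{Lei}v=0$: for $v\in V$ one gets $\lhd_{Lei}\circ\Delta_{Lei}(v)=v\lhd_{Lei}\one+\one\lhd_{Lei}v=v$, while $\lhd_{Lei}\circ\Delta_{Lei}(\one)=\one\lhd_{Lei}\one=\one$.

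The genuinely computational step---and the only real obstacle---is $\Delta$-idempotence for a cocommutative Hopf algebra (4). Expanding $\lhd_H=\mu^2\circ S_1\circ c_1\circ\Delta_2$ and precomposing with $\Delta$, coassociativity lets me write the outcome in Sweedler notation as $\lhd_H\circ\Delta(x)=S(x_{(2)})\,x_{(1)}\,x_{(3)}$. The antipode here sits on the middle leg, so I would first use cocommutativity to swap the first two legs, obtaining $S(x_{(1)})\,x_{(2)}\,x_{(3)}$; then the antipode identity $\mu\circ(S\otimes\Id)\circ\Delta=\nu\circ\varepsilon$ applied to the leading pair, followed by the counit axiom, collapses the whole expression to $x$. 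This is the single place where cocommutativity together with the full antipode and counit axioms are really needed. To keep the argument honest in an arbitrary symmetric category---where every transposition must be realized by the braiding $c$ rather than a naive flip---I would carry it out diagrammatically, as the paper recommends; the graded variants present no new difficulty, since every swap involved concerns either the degree-$0$ unit $\one$ or is governed by $c$ itself.
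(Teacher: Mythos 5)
Your proof is correct. The paper states this proposition without proof, so there is nothing to compare against; your case-by-case verification of the two spindle axioms is exactly the routine check the author leaves to the reader, and you correctly isolate the two points that are not entirely automatic: left-cocommutativity of the non-cocommutative $\Delta_{Ass}=\nu\otimes\Id_V$ (settled by naturality of $c$ applied to $\nu\otimes\nu$) and $\Delta$-idempotence for the Hopf-algebra adjoint action, where your reduction $S(x_{(2)})x_{(1)}x_{(3)}=S(x_{(1)})x_{(2)}x_{(3)}=x$ is precisely the step that genuinely uses cocommutativity together with the antipode and counit axioms.
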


\medskip
Plugging proposition \ref{thm:SpindleBrCoalg} into theorem \ref{thm:BraidedSimplHom}, one gets:

\begin{theorem}\label{thm:GSDhom}
Let $\epsilon$ be a character for a shelf $(V,\Delta,\lhd)$ in a symmetric preadditive category $(\C,\otimes,\II,c).$ Morphisms
\begin{align*}
({^\epsilon}\! d)_{n;i}& := (( \epsilon\otimes \lhd^{\otimes(i-1)})\circ\theta_{(i)} \circ (\Delta^{i-1})_i) \otimes \Id_V^{n-i} :V^{n}\rightarrow V^{n-1},\\
(d{^\epsilon})_{n;i} &:=   \Id_V^{i-1}\otimes\epsilon \otimes\chi^{\otimes(n-i)} :V^{n}\rightarrow V^{n-1},
\end{align*}
define then a pre-bisimplicial structure on $C_n:=V^n,$ where
$$\chi = \epsilon_2 \circ \Delta:V\longrightarrow V,$$
and $\theta_{(i)} = \bigl( \begin{smallmatrix}
 1 & 2 & \cdots & i-1    & i & i+1& \cdots & 2i-1\\ 
 2 & 4 & \cdots & 2(i-1) & 1 & 3  & \cdots & 2i-1
\end{smallmatrix} \bigr) \in S_{2i-1}$ acts on the powers of $V$ via the symmetric braiding $c.$

Further, $(C_n,({^\epsilon}\! d)_{n,i},s_{n,i} := \Delta_i)$ is a very weakly simplicial object, becoming weakly simplicial if $V$ is a spindle.
\end{theorem}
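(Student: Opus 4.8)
The plan is to read this statement off Theorem~\ref{thm:BraidedSimplHom} applied to the weak braiding $\sigma:=\sigma_{V,\Delta,\lhd}=\lhd_2\circ c_1\circ\Delta_2$ of Proposition~\ref{thm:shelf_gen}; the only real work is to rewrite the abstract face maps of that theorem in the explicit shelf-theoretic form stated here, after which every simpliciality axiom is inherited for free.

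First I would check the two hypotheses required to invoke Theorem~\ref{thm:BraidedSimplHom}(1). Proposition~\ref{thm:shelf_gen} guarantees that $\sigma$ is a weak braiding on $V$, and the lemma identifying characters with upper cuts shows that $\epsilon$ satisfies $(\epsilon\otimes\epsilon)\circ\sigma=\epsilon\otimes\epsilon$ (this unwinds immediately from $\epsilon\circ\lhd=\epsilon\otimes\epsilon$ and $(\epsilon\otimes\epsilon)\circ\Delta=\epsilon$). Theorem~\ref{thm:BraidedSimplHom}(1) then yields a pre-bisimplicial structure on $C_n=V^n$ with face maps $\epsilon_1\circ(\sigma_{V^{i-1},V}\otimes\Id_V^{n-i})$ and $\epsilon_n\circ(\Id_V^{i-1}\otimes\sigma_{V,V^{n-i}})$.

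Next I would identify these abstract maps with the displayed ones. For $({^\epsilon}\! d)_{n;i}$ I would expand $\sigma_{V^{i-1},V}=\sigma_1\circ\cdots\circ\sigma_{i-1}$ via \eqref{eqn:br_powers} and substitute $\sigma=\lhd_2\circ c_1\circ\Delta_2$ at each step; dragging the $i$-th strand leftward past $a_1,\dots,a_{i-1}$ comultiplies it repeatedly, and coassociativity collapses these into a single $(\Delta^{i-1})_i$ producing copies $b_{(1)},\dots,b_{(i)}$. The resulting interleaving $b_{(1)},a_1,b_{(2)},\dots,a_{i-1},b_{(i)}$ is exactly the reordering effected by $\theta_{(i)}$, after which $\epsilon_1$ absorbs the leftmost copy $b_{(1)}$ and the remaining pairs are fed to $\lhd^{\otimes(i-1)}$, giving $(\epsilon\otimes\lhd^{\otimes(i-1)})\circ\theta_{(i)}\circ(\Delta^{i-1})_i$ on the first $i$ strands. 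For $(d{^\epsilon})_{n;i}$ I would dually expand $\sigma_{V,V^{n-i}}$, dragging the $i$-th strand rightward; the final $\epsilon_n$ then lands on an iterated $\lhd$-product, and repeated use of the character identity $\epsilon\circ\lhd=\epsilon\otimes\epsilon$ peels off every $\lhd$, leaving $\epsilon$ on the $i$-th strand and one factor $\epsilon_2\circ\Delta=\chi$ on each strand to its right, i.e. $\Id_V^{i-1}\otimes\epsilon\otimes\chi^{\otimes(n-i)}$.

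It then remains to treat the degeneracies $s_{n,i}:=\Delta_i$. By Proposition~\ref{thm:SpindleBrCoalg} the datum $(V,\sigma,\Delta)$ is a semi-braided coalgebra, so Theorem~\ref{thm:BraidedSimplHom}(3) makes $(C_n,({^\epsilon}\! d)_{n,i},s_{n,i})$ very weakly simplicial; and when $V$ is a spindle the same proposition supplies $\sigma$-cocommutativity $\sigma\circ\Delta=\Delta$, whence Theorem~\ref{thm:BraidedSimplHom}(4) upgrades the structure to weakly simplicial. The hard part will be the combinatorial bookkeeping in the third paragraph: verifying that the repeated comultiplications arising from $\sigma_{V^{i-1},V}$ merge (by coassociativity) into $(\Delta^{i-1})_i$ and that the induced interleaving of the $b$-copies with the $a_j$'s is governed by precisely the permutation $\theta_{(i)}$. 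This index/strand tracking is best carried out diagrammatically, but it is where all the content of matching the abstract and explicit face maps resides.
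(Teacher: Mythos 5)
Your proposal is correct and follows exactly the paper's own route: the paper derives Theorem~\ref{thm:GSDhom} precisely by plugging Proposition~\ref{thm:SpindleBrCoalg} into Theorem~\ref{thm:BraidedSimplHom}, with the character serving as the upper cut for the weak braiding $\sigma_{V,\Delta,\lhd}$ of Proposition~\ref{thm:shelf_gen}. Your third paragraph, unwinding $\sigma_{V^{i-1},V}$ and $\sigma_{V,V^{n-i}}$ via coassociativity and the character identity to obtain the explicit formulas with $\theta_{(i)}$ and $\chi$, supplies the bookkeeping the paper leaves implicit, and it checks out.
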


As a consequence, any linear combination of total differentials ${^\epsilon}\! d$ and $d{^\epsilon}$ defines a differential for $V,$ and thus a homology theory if the category $\C$ is sufficiently nice.

For usual shelves, UAAs and ULAs, these total differentials coincide with those from \cite{Lebed1} (where they were obtained directly, without the formalism of GSD structures) and thus recover, for suitable choice of characters, many known homologies. Moreover, for usual spindles and unital associative algebras, the weakly simplicial structures $(V^n,({^\epsilon}\! d)_{n;i},\Delta_i)$  are precisely the familiar ones.

\begin{remark}
In fact in the settings of the theorem one has a {(very) weakly \textbf{bi}simplicial structure}: although the second braided coalgebra condition \eqref{eqn:BrCoalg'} does not hold in general, one checks directly the relations between the $(d{^\epsilon})_{n;i} $'s and the $s_{n,i}$'s, using the weak cocommutativity of $\Delta$ and its compatibility with the character $\epsilon.$

Note also that if the character $\epsilon$ is a counit for $\Delta,$ then the structure $(V^n,(d^\epsilon)_{n;i},\Delta_i)$ is simplicial.
\end{remark}

\begin{remark}
Observe that the face and degeneracy morphisms from theorem \ref{thm:GSDhom} are of very different nature. The author does not see how to make the definition symmetric.
\end{remark}

\bibliographystyle{plain}
\bibliography{biblio}

\end{document}